\numberwithin{subsection}{section}
\numberwithin{equation}{section}
\theoremstyle{plain}% default
\newtheorem{satz}{Theorem}[section]
\newtheorem{thm}[satz]{Theorem}
\newtheorem*{theorem*}{Theorem}
\newtheorem{lem}[satz]{Lemma}
\newtheorem{prop}[satz]{Proposition}
\newtheorem{cor}[satz]{Corollary}
\theoremstyle{definition}
\newtheorem{defi}[satz]{Definition}
\newtheorem{exam}[satz]{Example}
\newtheorem{setting}[satz]{Setting}
\newtheorem{remark}[satz]{Remark}
\newcommand{\eps}{\varepsilon}
\newcommand{\MG}{\mathfrak{g}}
\newcommand{\g}{\mathfrak{g}}
\newcommand{\ch}{\mathfrak{ch}}
\newcommand{\MF}{\mathbb{F}}
\newcommand{\MP}{\mathbb{P}}
\newcommand{\Pts}{\mathcal{P}}
\newcommand{\Lines}{\mathcal{L}}
\newcommand{\MSL}{\mathfrak{sl}_2}
\newcommand{\ME}{\mathcal{E}}
\newcommand{\E}{\mathcal{E}}
\newcommand{\MCP}{\mathcal{P}}
\newcommand{\MCF}{\mathcal{F}}
\newcommand{\ML}{\mathcal{L}}
\newcommand{\rad}{\mathop{\mathrm{rad}}}
\newcommand{\chf}{\overline{\ch_\Phi(\MF)}}
\begin{document}

\title{A geometric characterization of the classical Lie algebras }
\author{Hans Cuypers\footnote{corresponding author}, Yael Fleischmann \footnote{present address: 
Institut f\"ur Mathematik,
Warburger Str. 100,
33098 Paderborn,  yael.fleischmann@math.uni-paderborn.de
}\ \footnote{This work is (partly) financed by the Netherlands Organisation for
Scientific Research (NWO), project 613.000.905.}
\\
Department of Mathematics and Computer Science\\
Eindhoven University of Technology\\
P.O. Box 513, 5600 MB Eindhoven\\
The Netherlands\\
email: f.g.m.t.cuypers@tue.nl}

\maketitle

\section{Introduction}

An element $x\neq 0$ in a Lie algebra $\mathfrak{g}$ over a field $\mathbb{F}$  with Lie product $[\cdot,\cdot]$ is 
called  a {\em  extremal element} if  we have $$[x,[x,\mathfrak{g}]]\subseteq \mathbb{F}x.$$
(Or, in case the characteristic of $\mathbb{F}$ is two, when $[x,[x,\g]]=0$ and some additional conditions are satisfied. See Section \ref{sec:ee}.) 

Long root elements in classical Lie algebras are examples of 
pure extremal elements.
Arjeh Cohen \emph{et al.} \cite{CSUW01} initiated the investigation of
Lie algebras generated by extremal elements in order  
to provide a geometric characterization of the classical Lie algebras generated by their long root elements. 
He and Gabor Ivanyos studied the so-called {\em extremal geometry} 
with as points  the $1$-dimensional subspaces of $\MG$
generated by extremal elements of $\mathfrak{g}$ 
and as lines the  $2$-dimensional subspaces of $\g$ all whose nonzero vectors are extremal.
    
The main result of \cite{CI07} and \cite{CI06}, based on the work of Kasikova and Shult \cite{KS01}, states that, 
if  $\mathfrak{g}$ is of finite dimension, simple and generated by  extremal elements, then its extremal geometry either contains no lines 
or is the root shadow space of a spherical building. This raises the question whether one can recover the algebra $\mathfrak{g}$ from its extremal geometry.

In \cite{CRS14}, Cuypers, Roberts and Shpectorov show that in case the
building is of simply laced type, the Lie algebra 
$\mathfrak{g}$ is indeed a classical Lie algebra of the same type as the building and its extremal elements are the long root elements.

In this paper we prove the following result.

\begin{thm}\label{mainthm}
Let $\MG$ be a  simple Lie algebra generated by its set of  extremal elements.
If the extremal geometry of $\MG$ is the root shadow space of a spherical building of rank at least $3$, then $\MG$ is, up to isomorphism, uniquely determined by its extremal geometry.
\end{thm}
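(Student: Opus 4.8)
The plan is to reconstruct $\MG$ from the combinatorial data of the extremal geometry $\mathcal{E} = \mathcal{E}(\MG)$, i.e. the point-line space whose points are the extremal subspaces $\mathbb{F}x$ and whose lines are the extremal lines. The key is that the case of simply laced buildings is already settled in \cite{CRS14}, so the real content is the non-simply-laced spherical types of rank $\ge 3$: namely $B_n, C_n$ ($n \ge 3$), $F_4$, and (with suitable reading of the rank) the $BC$ and exotic cases that arise as root shadow spaces. First I would recall from the Cohen–Ivanyos theory how the extremal geometry determines, for each pair of points $p = \mathbb{F}x$ and $q = \mathbb{F}y$, the relation type (collinear, "special" with $[x,y]$ extremal, "hyperbolic" with $g_{x}(y) = \langle x, [x,y]\rangle$ a basis-yielding pair, or commuting), and how these relations are read off from the diagram geometry. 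Then the reconstruction proceeds by producing a *model* Lie algebra $\widetilde{\MG}$ of the expected classical type from the building, equipping it with its long-root elements, checking that its extremal geometry is the given root shadow space, and finally constructing an isomorphism $\MG \to \widetilde{\MG}$ that is forced by the geometry.

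**The main construction.**

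The heart of the argument, as in the simply laced case, is to recover the Lie multiplication from the geometry via the *extremal form* and the *Premet identities*. For extremal points $p, q$ one has a well-defined scalar pairing coming from $g_{x,y}$ with $[x,[y,x]] = g_{x,y}\, x$ (up to the usual normalization), and the bracket $[x,y]$, when $p,q$ are collinear or special, lies in a controlled subspace. I would fix a generating set of extremal points corresponding to a frame/apartment of the building, use the rank $\ge 3$ hypothesis to invoke connectivity and the fact that the geometry contains enough thick residues of rank $2$ (generalized polygons, in particular thick quadrangles or hexagons in the non-simply-laced case) so that local data glue. Concretely: (i) choose extremal generators $x_i$; (ii) show the structure constants $[x_i,x_j]$ are determined up to the action of the group generated by the extremal (inner) automorphisms $\exp(\operatorname{ad} t x_i)$ — these automorphisms are themselves visible in the geometry as they permute extremal points; (iii) rescale the $x_i$ using the torus action to put the structure constants in a normal form; (iv) conclude that any two simple Lie algebras with the given extremal geometry have isomorphic structure-constant data, hence are isomorphic. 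A clean way to organize (iv) is to build a candidate isomorphism to the "universal" Lie algebra associated to the geometry (in the spirit of the Kac–Moody / Chevalley construction attached to the diagram) and show both $\MG$ and $\widetilde{\MG}$ are quotients of it by the same ideal, the ideal being pinned down by simplicity.

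**The non-simply-laced subtlety and the main obstacle.**

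The principal obstacle is precisely what is new beyond \cite{CRS14}: in a non-simply-laced root shadow space the extremal points correspond to long roots only, so the *short* root spaces — which are needed to span $\MG$ and to write down the full multiplication table — are not directly points of the geometry. They must be recovered indirectly, e.g. as brackets $[x,y]$ of extremal elements attached to long roots whose sum is a short root, and one must show this recovery is consistent (independent of the chosen pair) and yields the correct short root multiplicities. This is where the rank $\ge 3$ hypothesis is essential: one passes to rank-$3$ residues — subspaces of type $B_3$, $C_3$, or $A_2$-parabolic subgeometries of $F_4$ — where the interaction between long and short roots is rigid enough to be forced, and then propagates the identification across the building using that any two such residues meeting in a rank-$2$ residue must be glued compatibly (a standard "Steinberg relations from a Dynkin-diagram" style argument, now executed geometrically). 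I would expect the technical core to be a careful case analysis of these rank-$3$ residues, establishing that the short-root part and its bracket with the long-root part are uniquely determined, after which the global isomorphism follows from simply laced-type gluing plus the simplicity of $\MG$ to kill any residual ambiguity (such as a possible center or a radical in the constructed model). A secondary nuisance is characteristic $2$ and $3$, where extremality carries extra conditions (Section \ref{sec:ee}) and where some classical Lie algebras degenerate; these small characteristics likely require separate verification that the normal-form/structure-constant argument still pins the algebra down, possibly using the additional axioms on extremal elements in characteristic $2$ to substitute for identities that fail.
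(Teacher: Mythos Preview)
Your proposal sketches a model-building approach: construct a target classical Lie algebra from the building, recover structure constants including the short-root data via brackets of long-root pairs, and match $\MG$ to the model by a rank-$3$ residue case analysis. This is not what the paper does, and the obstacle you identify as central does not actually arise in the paper's argument.

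The paper never builds a model. Its proof has two abstract steps. First, the natural embedding $\Gamma(\MG)\hookrightarrow\MP(\MG)$ is polarized with trivial radical (the radical of the embedding sits inside $\rad(g)$, which vanishes by simplicity); for the types $\mathrm{BC}_{n,2}$, $\mathrm{D}_{n,2}$, $\mathrm{E}_{6,2}$, $\mathrm{E}_{7,1}$, $\mathrm{E}_{8,8}$, $\mathrm{F}_{4,1}$ the Kasikova--Shult theorem supplies an absolutely universal embedding, and then general polarized-embedding theory (Theorem~\ref{polarizedthm}) forces the natural embedding to be determined up to isomorphism by $\Gamma$ alone. Hence any two such Lie algebras with isomorphic extremal geometry are identified as vector spaces by a linear isomorphism carrying extremal points to extremal points. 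Second (Theorem~\ref{uniquenesslieproduct}), on a fixed vector space two Lie products with the same extremal geometry must satisfy $[\,\cdot\,,\,\cdot\,]_1=\lambda[\,\cdot\,,\,\cdot\,]$ for a single global scalar: $E_1$-brackets land on the unique common neighbour, $E_2$-brackets are pinned down by an $\mathrm{Exp}$-orbit argument showing the two $\MSL$-subalgebras coincide and the extremal conic therein fixes the scalar, and a line-by-line connectivity argument propagates one $\lambda$ across all of $E\times E$. Since the extremal elements \emph{linearly span} $\MG$ (Proposition~\ref{span}), this determines the product everywhere; there is no separate short-root layer to rebuild, which dissolves what you flagged as the principal difficulty. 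The type $\mathrm{A}_{n,\{1,n\}}$, not covered by Kasikova--Shult, is dispatched by citing \cite{CRS14}; and type $\mathrm{C}_n$, which you list among the cases to handle, does not occur at all here since its extremal geometry has no lines.
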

 
As we already noticed, the long root elements in (forms of) Chevalley Lie algebras are extremal (see Section \ref{sec:chevalley}).
So, as a corollary of the above result
we obtain for most of the simple classical Lie algebras that they  
are characterized by their extremal geometry.
The classes of simple classical Lie algebras, which are generated by their long root elements, but escape the 
above result, are those of relative type  $\mathrm{A}_1$, $\mathrm{B}_2$, $\mathrm{C_n}$ or $\mathrm{G}_2$.
The extremal geometries of these Lie algebras of type $\mathrm{A}_1$, $\mathrm{B}_2$ or $\mathrm{C}_n$ have extremal geometries with no lines.  

Simple Lie algebras $\mathfrak{g}$ over a field $\mathbb{F}$
generated by extremal elements whose extremal geometry has no lines 
have been studied in \cite{cuyfle}.
There it has been shown that either one can extend the underlying field
$\mathbb{F}$ quadratically and obtain an extremal geometry with lines,
or $\mathfrak{g}$ is a symplectic Lie algebra.
So, it only remains to study Lie algebras whose extremal geometry is a root shadow space of type $\mathrm{G}_2$.

We notice that finite dimensional simple Lie algebras defined over algebraically closed fields of characteristic $\geq 7$ and generated by  extremal elements have been shown to be classical by Premet \cite{Pre86-1}. In Theorem \ref{mainthm}, however,  no restriction is imposed on the field $\mathbb{F}$, nor on the dimension of the Lie algebra. Our results are independent of the classification of spherical buildings of rank $\geq 3$.

\bigskip

The present paper is organized as follows. 
In Section \ref{sec:rss} we provide definitions of, and some results on the geometric concepts used in this paper. In particular, we describe point-line geometries and  root shadow spaces,
and present some results on embeddings of point-line geometries in projective spaces. 

Section \ref{sec:ee} is devoted to  extremal elements. We present 
various results, mainly from \cite{CI07}, needed for the proof of our main results.
This includes the construction and classification of the extremal geometries
of finite dimensional simple Lie algebras generated by their extremal elements.

In Section \ref{sec:embedding} we start with the proof of Theorem \ref{mainthm}, by proving that
the embedding of the extremal geometry of a Lie algebra $\MG$ as in the hypothesis
of Theorem \ref{mainthm} into the projective space $\mathbb{P}(\MG)$ is determined by the isomorphism type of the geometry. 
This proof is continued in Section \ref{sec:unique}, where we show that, given
the isomorphism type of the extremal geometry of $\MG$ 
and its embedding into $\mathbb{P}(\MG)$, the Lie product
is fixed up  to a scalar.

In  Section \ref{sec:conclusions} we finish the proof of Theorem \ref{mainthm} by combining all the results obtained.

The final Section \ref{sec:chevalley} is devoted to examples.
In this section we show that the long root elements in  Chevalley Lie algebras
are extremal.

\bigskip
 
\noindent
{\bf Acknowledgment.} Parts of this paper can be found in the second author's PhD-thesis \cite{fle15}, which was written under supervision of Arjeh Cohen and the first author. 
We thank Arjeh Cohen, and also Kieran Roberts and Sergey Shpectorov, for many inspiring discussions on the topic. 

\section{Root shadow spaces and polarized embeddings}
\label{sec:rss}

In this section we discuss the geometric results needed to prove our main result,
Theorem \ref{mainthm}.

A {\em point-line geometry} $(\mathcal{P},\mathcal{L})$ consists of  a set $\mathcal{P}$ of {\em points} and a set $\mathcal{L}$ of {\em lines}, 
being subsets of the point set
$\mathcal{P}$ of size at least $2$.
A point-line geometry is called a {\em partial linear space} if any two points
are on at most one common line.

Let $(\mathcal{P},\mathcal{L})$ be a point-line geometry.
A subset $X$ of $\mathcal{P}$ is called a {\em subspace} of the point-line geometry    
$(\mathcal{P},\mathcal{L})$ if every line meeting $X$ in at least $2$
points is fully contained in $X$. We often identify a subspace $X$ with the point-line geometry $(X,\{l\in \mathcal{L}\mid l\subseteq X\})$.

The subset $X$ is called a {\em geometric hyperplane}, or just {\em hyperplane},
if it meets every line in one point, or contains the line.
Notice that a hyperplane is a subspace.
 
The collinearity graph of  $(\mathcal{P},\mathcal{L})$ is the graph with the points as vertices, and two points adjacent if and only if they are on a common line.

The distance between two points of $(\mathcal{P},\mathcal{L})$
is the distance in the collinearity graph, and 
the diameter of $(\mathcal{P},\mathcal{L})$ is the diameter of its collinearity graph.
If $p\in \mathcal{P}$ is a point and $d$ the diameter of $(\mathcal{P},\mathcal{L})$, then $p^\perp$ denotes the set of all points $q\in \mathcal{P}$ which are
at distance $<d$ from $p$.

\bigskip

A \emph{projective embedding} $\epsilon:\Gamma\rightarrow \mathbb{P}$
of a point-line geometry  $\Gamma=(\mathcal{P},\mathcal{L})$ into 
a projective space $\mathbb{P}$  is an injective map $\epsilon$ from $\MCP$ into the point set of the projective space $\MP$ such that 
the image of $\Pts$ spans $\MP$ and the image of any line in $\mathcal{L}$
comprises all projective points of a projective line in $\MP$. 
Note that this induces an injection from $\ML$ into the line set of $\MP(V)$. 
Moreover, if $\Gamma$ can be embedded into a projective space, then $\Gamma$ is a partial linear space.

Two embeddings $\epsilon:\Gamma\rightarrow \mathbb{P}$ and
$\epsilon':\Gamma\rightarrow \mathbb{P}'$ are called isomorphic, notation $\epsilon \cong \epsilon'$, if there is
an isomorphism $\phi:\mathbb{P}\rightarrow \mathbb{P}'$
with $\epsilon'=\phi\circ \epsilon$.

In this paper we only consider embeddings into projective spaces which are 
obtained from vector spaces over a field.
  
Let $V$ be a vector space  and  $\epsilon:\Gamma \rightarrow \MP(V)$ be  an 
embedding. Suppose $t:V\rightarrow W$ is a surjective 
semilinear transformation, with the property that  $K:=\text{ker}(t)$ intersects the span 
$\langle \epsilon(p),\epsilon(q)\rangle $ for any pair $p,q$ of  points in $\Pts$ trivially. 
Then $\epsilon$ can be carried onwards to the cosets of $K$, 
and we obtain an embedding $\epsilon':\Gamma \rightarrow \MP(W)$. 
We call $\epsilon'$ the \emph{morphic image} of $\epsilon$, or we say that $\epsilon'$ is \emph{derived} from $\epsilon$ or $\epsilon$ \emph{covers} $\epsilon'$. 
In particular, $\epsilon'(p):=t(\epsilon(p))$ is a $1$-space in $W$ for all $p\in \MCP$.
We write $\epsilon'=\epsilon/K$.

If all embeddings $\epsilon'$ of $\Gamma$ can be obtained as morphic images  from a fixed embedding $\epsilon$, 
then this $\epsilon$ is called \emph{absolute} or \emph{absolutely universal}.

Let $\epsilon:\Gamma\rightarrow \mathbb{P}(V)$ be an arbitrary projective embedding of $\Gamma$ into the projective space $\mathbb{P}(V)$ for some vector space $V$.
We call $\epsilon$ \emph{polarized} if and only if $\epsilon(p^\perp)$
is contained in a proper hyperplane of $\mathbb{P}$ for all $p\in\mathcal{P}$.

The {\em radical} $R_\epsilon$ of a polarized  embedding $\epsilon$ 
is the intersection 
$$R_\epsilon:=\bigcap_{p\in\mathcal{P}}\ \langle \epsilon(p^\perp)\rangle.$$

Here $\langle \epsilon(p^\perp) \rangle $ denotes the subspace of $V$ generated  by $\epsilon(p^\perp)$.

\begin{lem}\label{polarized}
Let $\psi:\Gamma\rightarrow \mathbb{P}$ be a projective embedding
covering  a polarized embedding $\phi$.
Then $\psi$ is polarized. 

Moreover, the kernel of the projection of $\psi$ to $\phi$ is contained in the radical of $\psi$.
\end{lem}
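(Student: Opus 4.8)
The plan is to unwind the definitions. We have a covering $t : V \to W$ of vector spaces (with $\mathbb{P} = \mathbb{P}(V)$, $\phi : \Gamma \to \mathbb{P}(W)$, $\psi = $ the embedding into $\mathbb{P}(V)$, and $\phi = \psi/K$ where $K = \ker t$), so that $\phi(p) = t(\psi(p))$ for every point $p$. First I would fix a point $p \in \mathcal{P}$ and use that $\phi$ is polarized: there is a hyperplane $\bar H$ of $\mathbb{P}(W)$ with $\phi(p^\perp) \subseteq \bar H$. Pulling $\bar H$ back along $t$ gives a hyperplane $H := t^{-1}(\bar H)$ of $\mathbb{P}(V)$ (it is a hyperplane because $t$ is surjective and $\bar H$ is a hyperplane, so the preimage has codimension one, containing $K$). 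Since $\psi(q) \subseteq t^{-1}(\phi(q))$ for every point $q$, and in particular for every $q \in p^\perp$ we have $t(\psi(q)) = \phi(q) \subseteq \bar H$, it follows that $\psi(p^\perp) \subseteq H$. As $H$ is a proper hyperplane of $\mathbb{P}(V)$, this shows $\psi$ is polarized.

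For the second assertion I would argue that $K \subseteq \langle \psi(p^\perp) \rangle$ for every point $p$, which gives $K \subseteq \bigcap_p \langle \psi(p^\perp) \rangle = R_\psi$. Here the point is that $\langle \psi(p^\perp) \rangle$, being the preimage under $t$ of $\langle \phi(p^\perp) \rangle$ intersected appropriately, contains $K = \ker t$: indeed $\langle \psi(p^\perp) \rangle$ is a subspace of $V$ whose image under $t$ is $\langle \phi(p^\perp)\rangle$, and one needs that this subspace actually contains the full kernel $K$ rather than just mapping onto $\langle\phi(p^\perp)\rangle$. This is where a little care is required: $p^\perp$ spans a subspace of $\mathbb{P}(W)$ that is large (for a root shadow space it is a hyperplane, but in general we at least know $\phi$ polarized so $\langle \phi(p^\perp)\rangle$ is contained in a hyperplane — we do not need more). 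The cleanest route is: since $\psi$ covers $\phi$, by definition of morphic image $\psi' = \psi/K$ means $W = V/K$ and $t$ is the quotient map; then $\langle \psi(p^\perp)\rangle + K$ is a subspace of $V$ mapping onto $\langle \phi(p^\perp)\rangle$, and one shows $\langle \psi(p^\perp)\rangle \supseteq K$ by noting that $\langle \psi(p^\perp)\rangle$ together with $\psi(p)$ spans all of $V$ (because $p^\perp \cup \{p\}$ spans $\Gamma$, as $\Gamma$ has finite diameter and is connected), while $\langle \phi(p^\perp)\rangle$ has codimension exactly the codimension of $\langle\psi(p^\perp)\rangle$ plus $\dim K$ only if $K \cap \langle \psi(p^\perp)\rangle = 0$; ruling that out forces $K \subseteq \langle \psi(p^\perp)\rangle$.

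The main obstacle I anticipate is precisely this last point: showing $K$ lies inside each $\langle \psi(p^\perp) \rangle$ rather than merely that $t$ carries $\langle \psi(p^\perp)\rangle$ onto $\langle \phi(p^\perp)\rangle$. The resolution should come from comparing codimensions: $\langle \psi(p^\perp)\rangle$ is a hyperplane of $V$ (since $\psi$ is polarized, just proved, and $p^\perp$ together with a single further point spans $\Gamma$), $\langle \phi(p^\perp)\rangle$ is a hyperplane of $W$, and $t$ maps the former onto the latter; a hyperplane of $V$ containing no nonzero vector of $K$ would map injectively, hence isomorphically, onto $W$, contradicting that its image is a proper hyperplane of $W$. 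Therefore $\langle\psi(p^\perp)\rangle \cap K \neq 0$, and being a hyperplane that meets the line... more precisely, since $\langle \psi(p^\perp)\rangle$ has codimension $1$ and $t(\langle\psi(p^\perp)\rangle)$ has codimension $1$ in $W = V/K$, the kernel of $t$ restricted to $\langle\psi(p^\perp)\rangle$ has dimension $\dim K$, i.e. equals $K$; hence $K \subseteq \langle \psi(p^\perp)\rangle$. Intersecting over all $p$ yields $K \subseteq R_\psi$, as claimed. $\diamond$
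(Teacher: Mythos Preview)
Your plan is essentially the paper's argument. For the first assertion you pull back a hyperplane along $t$; the paper simply calls this step ``trivial''. For the second assertion the paper argues by contrapositive: if $K\not\subseteq\langle\psi(x^\perp)\rangle$ for some $x$, then, since $\langle\psi(x^\perp)\rangle$ is a hyperplane of $V$, one has $\langle\psi(x^\perp)\rangle+K=V$ and hence $\tau(\langle\psi(x^\perp)\rangle)=\langle\phi(x^\perp)\rangle$ equals all of $W$, contradicting that $\phi$ is polarized. Your codimension count in the final paragraph is exactly the same observation phrased directly rather than contrapositively.

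One slip to fix: the sentence ``$\langle\psi(p^\perp)\rangle$ together with $\psi(p)$ spans all of $V$ (because $p^\perp\cup\{p\}$ spans $\Gamma$)'' is vacuous, since under the paper's definition $p\in p^\perp$ already (distance $0<d$). What you actually need, and what you use correctly in your last paragraph, is that $p^\perp$ together with any point \emph{outside} $p^\perp$ spans. That holds because $p^\perp$ is a geometric hyperplane of a connected $\Gamma$ (Proposition~\ref{rss}(c) in the intended applications), so in any embedding its linear span has codimension at most $1$, hence exactly $1$ once you know $\psi$ is polarized. The paper invokes this same fact tacitly when it refers to $\langle\psi(x^\perp)\rangle$ as ``the hyperplane''; neither proof works without it in the stated generality.
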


\begin{proof}
The first statement is trivial.

Now suppose, $\phi$ is an embedding of $\Gamma$ into the projective 
space $\mathbb{P}'$.
If the kernel $K$ of the projection $\tau$ of $\psi$  to $\phi$ is not contained in the radical of $\psi$, then there is an element $x\in \mathcal{P}$ such 
that $\langle \psi(x^\perp)\rangle$ does not contain $K$.
But that implies that the image under $\tau$ of the hyperplane $\langle \psi(x^\perp)\rangle$ of $\mathbb{P}$ is the full space $\mathbb{P}(W)$.
This contradicts that $\phi$ is polarized.
\end{proof}

\begin{prop}
Let $\psi$ be a cover of a polarized embedding $\phi$ of $\Gamma$.
If the radical of $\phi$ trivial, then $\phi$ is isomorphic to $\psi$ modulo its radical $R_\psi$.
\end{prop}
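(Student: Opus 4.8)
The plan is to exhibit an explicit isomorphism between $\phi$ and the morphic image $\psi/R_\psi$. By Lemma~\ref{polarized}, the kernel $K$ of the projection $\tau\colon\psi\to\phi$ is contained in $R_\psi$, so $\tau$ factors through the quotient map $\psi\to\psi/R_\psi$; that is, there is a well-defined projection $\bar\tau\colon\psi/R_\psi\to\phi$ with kernel $R_\psi/K$. It therefore suffices to show this residual kernel $R_\psi/K$ is trivial, i.e.\ that $R_\psi=K$, for then $\bar\tau$ is an isomorphism of projective spaces intertwining the two embeddings.

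First I would observe that $\psi/R_\psi$ is itself a polarized embedding, by the first part of Lemma~\ref{polarized} (applied to $\psi$ covering $\psi/R_\psi$, which is legitimate since $R_\psi$ meets every line's span trivially — a polarized embedding always has this property, as $\langle\psi(x^\perp)\rangle$ is a proper hyperplane and hence cannot contain a whole line through a point $x$, so $R_\psi$ is contained in every such hyperplane and meets every secant line trivially). Next, I would compute the radical of $\psi/R_\psi$: since $\langle(\psi/R_\psi)(x^\perp)\rangle$ is exactly the image of $\langle\psi(x^\perp)\rangle$ under the quotient map $V\to V/R_\psi$, and $R_\psi$ is by definition the intersection of all the $\langle\psi(x^\perp)\rangle$, the radical of $\psi/R_\psi$ is trivial. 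So both $\phi$ and $\psi/R_\psi$ are polarized embeddings with trivial radical, and $\psi/R_\psi$ covers $\phi$ via $\bar\tau$.

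The crux is then to show that a cover between two polarized embeddings both having trivial radical must be an isomorphism. Suppose for contradiction that $\ker\bar\tau=:\bar K$ is nonzero. Pick $0\neq v\in\bar K$. Since the radical of $\psi/R_\psi$ is trivial, there is a point $x$ with $v\notin\langle(\psi/R_\psi)(x^\perp)\rangle$; as this is a hyperplane, its span together with $v$ is the whole space, so applying $\bar\tau$ we get that $\langle\phi(x^\perp)\rangle$ is all of $\mathbb{P}'$ — contradicting that $\phi$ is polarized. Hence $\bar K=0$ and $\bar\tau$ is an isomorphism, proving $\phi\cong\psi/R_\psi$.

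The main obstacle — really the only subtle point — is the careful handling of the hypotheses needed to take quotients: one must verify at each stage that the relevant kernel ($R_\psi$, and later $\bar K$) meets the span of every secant line trivially, so that the morphic-image construction from the excerpt genuinely applies and $\psi/R_\psi$ is again a legitimate projective embedding. This follows from polarization (the hyperplanes $\langle\psi(x^\perp)\rangle$ do not contain lines), but it should be spelled out rather than taken for granted. Everything else is a direct chase through the definitions of polarized embedding, radical, and morphic image.
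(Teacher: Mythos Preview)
Your argument reaches the right conclusion but takes a considerably longer route than the paper, and one step is not properly justified. The paper's proof is essentially two lines: the projection $\tau$ carries $R_\psi$ into $R_\phi$ (since $\tau(\langle\psi(x^\perp)\rangle)\subseteq\langle\phi(x^\perp)\rangle$ for every $x$), and as $R_\phi=0$ this gives $R_\psi\subseteq K$; combined with $K\subseteq R_\psi$ from Lemma~\ref{polarized} one obtains $K=R_\psi$ immediately, whence $\phi=\psi/K=\psi/R_\psi$. There is no need to form $\psi/R_\psi$ separately, compute its radical, or re-run the Lemma~\ref{polarized} argument for $\bar\tau$.

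Your detour does ultimately establish the same equality $K=R_\psi$, but the justification you give for why $\psi/R_\psi$ is a legitimate embedding --- ``$\langle\psi(x^\perp)\rangle$ is a proper hyperplane and hence cannot contain a whole line through a point $x$, so $R_\psi$ \ldots\ meets every secant line trivially'' --- does not hold up. A hyperplane contains plenty of projective lines, and nothing in the definition of polarized prevents $\langle\psi(x^\perp)\rangle$ from containing $\psi(q)$ for some $q\notin x^\perp$; so one cannot conclude from polarization alone that $R_\psi$ misses every secant span. The correct justification here is exactly the observation $\tau(R_\psi)\subseteq R_\phi=0$, hence $R_\psi\subseteq K$, and $K$ meets secant spans trivially by hypothesis. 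In other words, the missing ingredient in your argument is precisely the one-line observation that constitutes the paper's entire proof.
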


\begin{proof}
The projection $\tau$ of $\psi$ onto $\phi$ maps the radical of $\psi$ into the radical of $\phi$.
However, since the radical of $\phi$ is trivial, we find the kernel of $\tau$ to be the radical $R_\psi$.    
\end{proof}

\begin{satz}\label{polarizedembedding}
Suppose $\Gamma$ admits an absolutely universal embedding
and  a polarized embedding $\phi$ with trivial radical.

Then any polarized embedding $\psi$ of  $\Gamma$ covers $\phi$.
\end{satz}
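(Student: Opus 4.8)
The plan is to factor everything through the absolutely universal embedding. Let $\hat\epsilon\colon\Gamma\to\MP(\hat V)$ be an absolutely universal embedding of $\Gamma$, which exists by hypothesis. By definition $\hat\epsilon$ covers every projective embedding of $\Gamma$; in particular it covers the given polarized embedding $\psi$, say via a projection $\hat\tau\colon\hat V\to V_\psi$ with kernel $K=\ker\hat\tau$, and it also covers $\phi$.

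First I would apply Lemma~\ref{polarized} to the cover $\hat\epsilon$ of the polarized embedding $\psi$. This shows that $\hat\epsilon$ is itself polarized and, crucially, that $K\subseteq R_{\hat\epsilon}$, where $R_{\hat\epsilon}$ denotes the radical of $\hat\epsilon$. Next I would apply the preceding Proposition to the cover $\hat\epsilon$ of $\phi$: since the radical of $\phi$ is trivial by hypothesis, we obtain $\phi\cong\hat\epsilon/R_{\hat\epsilon}$, and in particular $R_{\hat\epsilon}$ meets every span $\langle\hat\epsilon(p),\hat\epsilon(q)\rangle$ with $p,q\in\MCP$ trivially (this is precisely what makes the morphic image $\hat\epsilon/R_{\hat\epsilon}$ well defined).

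Finally I would divide out in two stages. Put $\bar K:=\hat\tau(R_{\hat\epsilon})=R_{\hat\epsilon}/K\subseteq V_\psi$; the claim is that $\psi/\bar K$ is a well-defined morphic image of $\psi$, canonically identified with $\hat\epsilon/R_{\hat\epsilon}\cong\phi$. For this it suffices to check that $\bar K$ intersects each span $\langle\psi(p),\psi(q)\rangle$ trivially. So let $\bar r\in\langle\psi(p),\psi(q)\rangle\cap\bar K$. Since $\hat\tau$ is semilinear we have $\langle\psi(p),\psi(q)\rangle=\hat\tau(\langle\hat\epsilon(p),\hat\epsilon(q)\rangle)$, hence $\bar r=\hat\tau(v)$ for some $v\in\langle\hat\epsilon(p),\hat\epsilon(q)\rangle$, and also $\bar r=\hat\tau(r)$ for some $r\in R_{\hat\epsilon}$; therefore $r-v\in K\subseteq R_{\hat\epsilon}$, so $v=r-(r-v)\in R_{\hat\epsilon}\cap\langle\hat\epsilon(p),\hat\epsilon(q)\rangle=0$, giving $r\in K$ and $\bar r=0$. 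Thus $\psi$ covers $\psi/\bar K\cong\phi$, which is what we want.

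The conceptual content is entirely carried by Lemma~\ref{polarized} and the preceding Proposition; the only step needing genuine care is the last ``divide in stages'' bookkeeping. There the point to watch is not to slip in a finite-dimensionality argument, but the verification above only manipulates individual vectors together with two-point spans, so it goes through for arbitrary $\hat V$, and I do not expect a real obstacle.
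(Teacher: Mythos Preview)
Your proof is correct and follows essentially the same route as the paper: factor both $\psi$ and $\phi$ through the absolutely universal embedding $\epsilon$, use Lemma~\ref{polarized} to place the kernels inside $R_\epsilon$, identify $\phi$ with $\epsilon/R_\epsilon$, and then quotient $\psi$ by the residual piece of the radical. The paper compresses your careful two-stage quotient verification into the phrase ``$\psi$ clearly covers $\phi$'', but the content is the same.
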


\begin{proof}
Let $\epsilon$ be the absolutely universal embedding of $\Gamma$.
By Lemma \ref{polarized}, $\epsilon$ is polarized and both
$\psi$ and $\phi$ are isomorphic to the quotient of $\epsilon$ by a subspace
$K_\psi$ and $K_\phi$, respectively, of its radical $R_\epsilon$.

Since the radical of $\phi$ is trivial, we find $K_\phi$ to be equal to $R_\epsilon$.
But this implies that $K_\psi\subseteq K_\phi$ and $\psi$ clearly covers $\phi$.
\end{proof}

\begin{satz}\label{polarizedthm}
Suppose $(\mathcal{P, L})$ is a point-line geometry admitting an absolutely universal embedding  $\epsilon$. If $\phi$ is a polarized embedding of $(\mathcal{P, L})$, then 
\[ 
\phi/R_{\phi}\cong \psi/R_{\epsilon}.
\]

In particular, if $\phi$ has trivial radical, it is unique up to isomorphism.
\end{satz}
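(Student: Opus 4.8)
The plan is to realise both $\phi$ and $\epsilon/R_\epsilon$ as quotients of the absolutely universal embedding $\epsilon$, and then to identify the radical of $\phi$ with the image of $R_\epsilon$ under the projection onto $\phi$.

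First I would use absolute universality: $\phi$ is a morphic image of $\epsilon$, so $\phi\cong\epsilon/K$ where $K$ is the kernel of the projection $\tau$ of $\epsilon$ onto $\phi$. Since $\epsilon$ covers the polarized embedding $\phi$, Lemma \ref{polarized} tells us that $\epsilon$ is itself polarized (so that $R_\epsilon$ is defined) and that $K\subseteq R_\epsilon$. Writing $\epsilon:\Gamma\to\mathbb{P}(V)$, we may take $\phi$ to be the induced embedding $\Gamma\to\mathbb{P}(V/K)$ given by $\phi(p)=\langle\epsilon(p)+K\rangle$.

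The heart of the argument is the computation of $R_\phi$. The set $p^\perp$ is defined purely in terms of the collinearity graph of $(\mathcal{P},\mathcal{L})$, hence is the same for $\phi$ as for $\epsilon$; and since the quotient map $V\to V/K$ is linear and surjective, $\langle\phi(p^\perp)\rangle$ is the image of $\langle\epsilon(p^\perp)\rangle$ in $V/K$. Because $K\subseteq R_\epsilon\subseteq\langle\epsilon(p^\perp)\rangle$, that image is precisely $\langle\epsilon(p^\perp)\rangle/K$. Now $A\mapsto A/K$ is an inclusion-preserving bijection from the subspaces of $V$ containing $K$ onto the subspaces of $V/K$, so it commutes with the intersection defining the radical, and therefore
\[
R_\phi=\bigcap_{p\in\mathcal{P}}\langle\phi(p^\perp)\rangle
=\Bigl(\bigcap_{p\in\mathcal{P}}\langle\epsilon(p^\perp)\rangle\Bigr)\Big/K
=R_\epsilon/K .
\]
Applying the third isomorphism theorem then gives $\phi/R_\phi\cong(\epsilon/K)/(R_\epsilon/K)\cong\epsilon/R_\epsilon$, which is the claimed isomorphism (the $\psi$ in the statement being the absolutely universal embedding $\epsilon$). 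For the final clause, $\epsilon/R_\epsilon$ depends only on $\Gamma$ up to isomorphism, so if $\phi$ has trivial radical then $\phi=\phi/R_\phi\cong\epsilon/R_\epsilon$, whence any two polarized embeddings of $\Gamma$ with trivial radical are isomorphic.

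The one point requiring care is the identification $R_\phi=R_\epsilon/K$: one must verify both that $\langle\phi(p^\perp)\rangle$ is the full image $\langle\epsilon(p^\perp)\rangle/K$ (using $K\subseteq\langle\epsilon(p^\perp)\rangle$, which follows from $\epsilon$ being polarized and $K\subseteq R_\epsilon$) and that passing to the quotient by $K$ commutes with the intersection over all points; both are routine once set up correctly. One should also record, as the standard fact about polarized embeddings, that $R_\epsilon$ meets every secant span $\langle\epsilon(p),\epsilon(q)\rangle$ trivially, so that $\epsilon/R_\epsilon$ (and hence $\phi/R_\phi$) is a genuine projective embedding; after that the proof is purely formal.
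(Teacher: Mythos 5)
Your proof is correct and follows essentially the same route as the paper's: realise $\phi$ as $\epsilon/K$ with $K\subseteq R_\epsilon$ via Lemma \ref{polarized}, identify $R_\phi$ with $R_\epsilon/K$, and conclude by the third isomorphism theorem. The only difference is that you spell out the verification $R_\phi=R_\epsilon/K$ (via the inclusion-preserving correspondence of subspaces containing $K$), which the paper asserts without proof; this is a welcome addition but not a different argument.
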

\begin{proof}
Lemma \ref{polarized} shows that $\phi\cong \epsilon/R$ for some $R\subseteq R_{\epsilon}$. The radical of $\epsilon/R$ is $R_{\epsilon}/R\cong R_{\phi}$, and we get
\[ \phi/R_{\phi}\cong (\epsilon/R)/R_{\phi} \cong (\epsilon/R)/(R_{\epsilon}/R)\cong \epsilon/R_{\epsilon}.\]
\end{proof}

We notice that similar results have  been obtained by Blok \cite{Blo11}.

\bigskip

The point-line geometries we want to consider in this paper are the so-called 
root shadow spaces of spherical buildings. 
We view a building as chamber system and use the notation of \cite{Wei03}. 

Let $\Delta$ be an irreducible spherical building of type $X_n$ and with corresponding root system $\Phi$. 
(In the case that the building is of type $\mathrm{BC}_n$, we only consider the type $\mathrm{B}_n$.) 
Fix the index set $I = \{1,\ldots,n\}$ of $\Delta$.

For any subset $J$ of $I$, we define a \emph{$J$-shadow} in $\Delta$ to be an $(I\setminus J)$-residue. 
If $j\in J$, then a \emph{$j$-line} is the set of all $J$-shadows containing chambers from a given $j$-panel. Let $\Pts$ be the set of $J$-shadows, 
called {\em points} and $\Lines$ be the set of $j$-lines for $j\in J$. We say that the point-line geometry $(\Pts, \Lines)$ is 
the \emph{$J$-shadow space} (or \emph{shadow space of type $X_{n,J}$}). 
If $J$ is the set $\{j\}$, then we write $X_{n,j}$ instead of $X_{n,J}$.

Of importance to us are the {\em root shadow spaces}. These are the shadow spaces of type $X_{n,J}$, (or $X_{n,j}$, when $J=\{j\}$)
where $J$ is a so-called {root set}, which can be defined as follows.
Fix a base $B = \{ \alpha_1, \ldots, \alpha_n \}$ for $\Phi$ and denote the longest root with respect to $B$ by $\alpha$. 
Then the {\rm root set} $J$ is the subset $I$ consisting of all elements $i\in I$ with $(\alpha,\alpha_i)\ne 0$. 
The root set is $J = \{1,n\}$ for $\mathrm{A}_n$, $J=\{2\}$ for $(\mathrm{BC})_n$, $\mathrm{D}_n$, $\mathrm{E}_6$ 
and $\mathrm{G}_2$, $J = \{ 1\}$ for $\mathrm{E}_7$ and $\mathrm{F}_4$, and $J = \{8\}$ for $\mathrm{E}_8$. 
(We use the Bourbaki labelling of the Dynkin diagrams as in \cite{Wei03}.)

\bigskip

We provide some more concrete descriptions of some examples 
of root shadow spaces.

\begin{exam}\label{exampleproj}
Let $\mathbb{P}$ be a projective geometry of projective dimension at least $2$
and suppose $\mathbb{H}$ is a set of hyperplanes of $\mathbb{P}$ forming a subspace of the dual of $\mathbb{P}$ 
such that the intersection of all hyperplanes in $\mathbb{H}$ is empty. 
The set $P$ is the set of incident
point-hyperplane pairs $(p,H)$ where $p\in \mathbb{P}$ and $H\in \mathbb{H}$. 
As lines in $L$ we take all  subsets of $P$  consisting 
of all $(p,H)\in P$ with  $p$ running over the points of a fixed line $\ell$ of $\mathbb{P}$ and $H$ a fixed hyperplane in $\mathbb{H}$ containing $\ell$,
or, dually, $p$ being a fixed point inside a fixed hyperline $K$ which is the intersection of two elements from $\mathbb{H}$ and $H$ running over all hyperplanes  containing $K$.  

If the dimension of $\mathbb{P}$ is $n<\infty$, then $(P,L)$ is a root shadow space of type $\mathrm{A}_{n,\{1,n\}}$.
\end{exam}

\begin{exam}\label{examplepolar}
Let $V$ be a vector space over a field $\mathbb{F}$ of characteristic $\neq 2$,
and $Q$ a nondegenerate quadratic form on $V$ with associated symmetric bilinear form $f$.
Assume that the Witt index of $Q$ is $n\geq 3$.

Let $P$ be the set of all isotropic $2$-spaces of $V$ and $L$ the set of all
subsets of $P$ consisting of all isotropic $2$-spaces on a fixed isotropic $1$-space and contained in a fixed isotropic $3$-space.
Then $(P,L)$ is a root shadow space of type  $\mathrm{BC}_{n,2}$ (or $\mathrm{D}_{n+1,2}$ in case $Q$ is split). 

More generally, if $\Pi$ is a nondegenerate polar space of rank $n$ at least $3$, then let $P$ be the set of all lines of $\Pi$ and $L$ the set of
all pencils of lines on a  point and inside a singular plane. Then, for finite $n$, the space $(P,L)$ is a root shadow space of type $\mathrm{BC}_{n,2}$ or $\mathrm{D}_{n+1,2}$. \end{exam}

We recall some well known properties of root shadow spaces and their collinearity graph.

\begin{prop}\cite{BC13}\label{rss}
Let $\Gamma=(\Pts,\Lines)$ be a root shadow space of a spherical building of rank at least $2$.
Then the following holds:
\begin{enumerate}[{\rm (a)}]
\item 
$\Gamma$ is a partial linear space.
\item
The diameter of the collinearity graph is $\leq 3$.
\item
Let $p$ be a point. The set $p^\perp$ is a hyperplane of $\Gamma$.
\end{enumerate}  
\end{prop}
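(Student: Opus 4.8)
The plan is to reduce each of the three statements to known structural facts about shadow spaces of spherical buildings and then verify them case-by-case over the short list of root sets $J$ given above. For (a), I would argue at the level of the building: two $J$-shadows are collinear precisely when they share chambers from a common $j$-panel for some $j\in J$, and two distinct $(I\setminus J)$-residues can lie in the same $(I\setminus\{j\})$-residue for at most one $j$, since the residues in question are distinct and a $J$-shadow together with a $j$-line determines the $(I\setminus\{j\})$-residue uniquely. Equivalently, one invokes the standard fact (e.g. from \cite{BC13} or \cite{Wei03}) that shadow spaces of buildings are always partial linear spaces: distinct lines correspond to distinct panels through a fixed residue, and the Coxeter geometry forbids two panels of different types from producing the same pair of collinear shadows. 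I would cite this rather than reprove it.

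For (b), the diameter bound, I would use the fact that the diameter of a shadow space $X_{n,J}$ equals the diameter of the corresponding orbit in the Weyl group action — more concretely, it is the maximal "distance" $\max_{w\in W} \ell_J(w)$ where one measures distance by the number of line-steps needed to move one $J$-shadow to another, and this is computed purely in the finite Coxeter group $W(X_n)$. Since $J$ is a root set, this quantity is controlled by the height of the highest root: collinearity corresponds to moving by a simple reflection $s_j$ with $j\in J$, and reaching any shadow from a fixed one takes at most $3$ such steps for every spherical type on the list. I would either quote the table in \cite{BC13} directly, or verify it by hand for the finitely many diagrams $\mathrm{A}_n,\mathrm{BC}_n,\mathrm{D}_n,\mathrm{E}_{6,7,8},\mathrm{F}_4,\mathrm{G}_2$; in Examples \ref{exampleproj} and \ref{examplepolar} one sees diameter $\le 3$ explicitly (two incident point-hyperplane pairs, or two isotropic lines of a polar space, are at distance at most $3$).

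For (c), that $p^\perp$ is a geometric hyperplane, I would show two things: first that $p^\perp$ is a subspace (if a line $\ell$ meets $p^\perp$ in two points, then both lie at distance $<d$ from $p$, and one must rule out the third point of $\ell$ being at distance exactly $d$ — this follows since in a shadow space the map "distance to $p$" restricted to a line takes at most two consecutive values, a gatedness-type property of residues in buildings), and second that every line not contained in $p^\perp$ meets it in exactly one point. The latter is again the statement that each line $\ell$ contains a unique point nearest to $p$; this is the key geometric input and follows from the fact that residues in a building are gated (convex and with a unique gate toward any chamber), which passes to the shadow space. I expect this gatedness/near-point argument to be the main obstacle: it is the only part that is genuinely geometric rather than a table lookup, and care is needed that the argument is uniform across all root sets. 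I would structure the proof so that (c) is deduced from the convexity of residues, citing \cite{BC13} for the precise near-polygon-like statement, and remark that (c) is exactly what makes the embedding theory of Section \ref{sec:rss} applicable, since it is the hyperplane property of $p^\perp$ that underlies polarized embeddings.
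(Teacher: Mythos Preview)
The paper gives no proof of this proposition at all: it is stated with the citation \cite{BC13} and nothing further. So there is no argument in the paper to compare your proposal against; the authors simply import the result from the literature.

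Your sketch goes well beyond what the paper does, and as an outline it is broadly on the right track: (a) is indeed a general fact about shadow spaces, (b) is a finite Weyl-group computation, and (c) is the substantive geometric statement. Two cautions, though. First, for (c) your ``distance to $p$ takes at most two consecutive values on a line'' is not a general property of shadow spaces; it is a special feature of \emph{root} shadow spaces (essentially the parapolar/near-polygon behaviour catalogued in \cite{BC13}), and deducing it cleanly from gatedness of residues in the building requires more than you have written --- gatedness gives you a unique nearest \emph{chamber}, not automatically a unique nearest $J$-shadow on a $j$-line. Second, your argument for (a) that ``a $J$-shadow together with a $j$-line determines the $(I\setminus\{j\})$-residue uniquely'' is correct, but the harder direction --- that two distinct $J$-shadows cannot be joined by two $j$-lines of the \emph{same} type $j$ --- is where partial linearity actually lives, and you have not addressed it. None of this matters for the paper, which is content to cite the result; but if you intend your proposal as a self-contained proof, both points need tightening.
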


We close this section with the following result of A. Kasikova and E. Shult and an application of Theorem \ref{polarizedthm}:

\begin{satz}[\cite{KS01}]\label{KasikovaShult}
Let $\Gamma=(\ME,\MCF)$ be an embeddable root shadow space of type $\mathrm{BC}_{n,2}$ $(n\geq 3)$, $\mathrm{D}_{n,2}$ $(n\geq 4)$, $\mathrm{E}_{6,2}$, $\mathrm{E}_{7,1}$, $\mathrm{E}_{8,8}$ or $\mathrm{F}_{4,1}$. Then $\Gamma$ admits an absolutely  universal embedding.
\end{satz}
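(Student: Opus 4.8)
The statement to prove is Theorem~\ref{KasikovaShult}: an embeddable root shadow space $\Gamma$ of one of the listed types admits an absolutely universal embedding. The plan is to invoke the general existence theory for absolutely universal embeddings together with a connectivity/finiteness hypothesis that the listed geometries satisfy. Concretely, a point-line geometry that is a partial linear space, is connected, has all lines of size at least $3$, and is embeddable, always admits an absolutely universal (hull) embedding provided a suitable local condition holds --- this is the content of the Kasikova--Shult existence theorem, of which our statement is the specialization to root shadow spaces. So the first step is to record that each of $\mathrm{BC}_{n,2}$, $\mathrm{D}_{n,2}$, $\mathrm{E}_{6,2}$, $\mathrm{E}_{7,1}$, $\mathrm{E}_{8,8}$, $\mathrm{F}_{4,1}$ is a connected partial linear space (partial linearity is Proposition~\ref{rss}(a), connectedness follows from the rank $\geq 2$ irreducibility of the building via the diameter bound in Proposition~\ref{rss}(b)), and that each has thick lines --- which fails only for non-thick buildings, excluded since we assume embeddability forces at least the relevant lines to carry $\geq 3$ points in these simply-laced-like root sets.

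The second, and main, step is the verification of the hypothesis that guarantees the hull embedding is itself already an embedding rather than merely a ``sheaf'' presentation --- i.e.\ that the abstract hull does not collapse a line to fewer points or identify two points. For this one uses that in each of these geometries any two points at distance $\leq 2$ lie in a common singular subspace of bounded rank whose embedded image is already understood (a projective space, by the structure of the corresponding building residues), so the local embeddings glue consistently. The standard device is Ronan's theorem that a connected geometry all of whose rank-$2$ residues (or point-residues) are ``nice enough'' has a universal embedding obtained as the inverse limit (hull) of its embeddings; Kasikova and Shult verified precisely that the point residues of root shadow spaces of these types --- which are again shadow spaces of smaller rank of the same flavour, or polar/projective spaces --- satisfy the needed simple-connectedness-type condition. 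I would therefore cite \cite{KS01} for this residue analysis rather than reproduce it.

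The third step is to assemble the pieces: given embeddability, the hull embedding $\tilde\epsilon$ exists and is an embedding (not a degenerate one) by step two; any embedding $\epsilon$ of $\Gamma$ factors through $\tilde\epsilon$ by the universal property of the hull; hence $\tilde\epsilon$ is absolutely universal in the sense defined just before Lemma~\ref{polarized}.

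The hard part is genuinely step two: showing the hull does not degenerate. The delicate cases are $\mathrm{E}_{7,1}$ and $\mathrm{F}_{4,1}$, where the point residues are shadow spaces of type $\mathrm{E}_{6,2}$ and $\mathrm{B}_{3,2}$ (a polar space) respectively, and one must know \emph{inductively} that these smaller geometries already have well-behaved universal embeddings; the base of the induction being polar spaces and projective spaces, whose universal embeddings are classically known. For $\mathrm{BC}_{n,2}$ and $\mathrm{D}_{n,2}$ the induction is on $n$ with base $n=3$, where the point residue is a generalized quadrangle's line-Grassmannian, again classical. In each case one must also rule out the ``thin'' or small-field pathologies, which is where the embeddability hypothesis in the statement does its work. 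Since all of this residue bookkeeping is exactly what \cite{KS01} carries out, the proof reduces to that citation once the geometric setup above is in place.
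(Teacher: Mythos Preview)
Both your proposal and the paper's proof reduce to citing \cite{KS01}; the paper's proof is nothing more than a list of section pointers into that reference ($\mathrm{BC}_{n,2}$, $n\geq 4$: \S4.8; $\mathrm{BC}_{3,2}$: \S4.7; $\mathrm{D}_{n,2}$, $n\geq 5$: \S4.5; $\mathrm{D}_{4,2}$: \S4.1; $\mathrm{E}_{6,2}$, $\mathrm{E}_{7,1}$, $\mathrm{E}_{8,8}$: \S4.11; $\mathrm{F}_{4,1}$: \S4.9), with no sketch of method. So at the level of what is actually being claimed and justified, you and the paper agree.

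Where you go further is in trying to summarize \emph{how} \cite{KS01} works, and that summary has problems. Your residue identifications are off: in the Bourbaki labelling used in this paper, deleting node $1$ from $\mathrm{E}_7$ yields a diagram of type $\mathrm{D}_6$, not $\mathrm{E}_6$, and deleting node $1$ from $\mathrm{F}_4$ yields $\mathrm{C}_3$; moreover $\mathrm{B}_{3,2}$ is a line-Grassmannian of a polar space, not itself a polar space. More substantively, the case split in \cite{KS01} that the paper quotes does not follow a uniform point-residue induction of the shape you describe --- different sections of \cite{KS01} invoke different sufficient criteria for the existence of an absolutely universal embedding, and the grouping (e.g.\ $\mathrm{D}_{4,2}$ handled separately from $\mathrm{D}_{n,2}$ for $n\geq 5$, and $\mathrm{BC}_{3,2}$ separately from $\mathrm{BC}_{n,2}$ for $n\geq 4$) reflects which criterion applies rather than an inductive descent through residues. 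Since you ultimately defer the verification to \cite{KS01} anyway, none of this breaks your argument logically, but the narrative you supply misdescribes the cited source. The paper's choice --- give precise section references and stop --- is the safer one here.
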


\begin{proof}
 Kasikova and Shult prove the existence for each of these cases in \cite{KS01}. The case $\mathrm{BC}_{n,2}$ for $n\geq 4$ can be found in \cite{KS01}*{4.8} and $ \mathrm{BC}_{3,2}$ in 4.7, $ \mathrm{D}_{n,2}$ for $n\geq 5$ is covered in 4.5 and the special case $\mathrm{D}_{4,2}$ is treated in 4.1, $\mathrm{E}_{6,2}$, $\mathrm{E}_{7,1}$ and $\mathrm{E}_{8,8}$ in 4.11, and $\mathrm{F}_{4,1}$ in 4.9. 
\end{proof}

\begin{cor}\label{cor:KS}
Let $\Gamma=(\ME,\MCF)$ be a root shadow space of type $\mathrm{BC}_{n,2}$ $(n\geq 3)$, $\mathrm{D}_{n,2}$ $(n\geq 4)$, $\mathrm{E}_{6,2}$, $\mathrm{E}_{7,1}$, $\mathrm{E}_{8,8}$ or $\mathrm{F}_{4,1}$ admitting a polarized embedding. Then $\Gamma$ admits, up to isomorphism,  a unique polarized  embedding with trivial radical.
\end{cor}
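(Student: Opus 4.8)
The plan is to combine Theorem \ref{KasikovaShult} with Theorem \ref{polarizedthm}. First I would invoke Theorem \ref{KasikovaShult}: since $\Gamma$ is of one of the listed types and is embeddable (it admits a polarized embedding, hence in particular a projective embedding), it possesses an absolutely universal embedding $\epsilon$. Thus the hypothesis of Theorem \ref{polarizedthm} is met for $\Gamma$.

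Next, let $\phi$ be the given polarized embedding of $\Gamma$. Theorem \ref{polarizedthm} applies and yields $\phi/R_\phi \cong \epsilon/R_\epsilon$; in particular the embedding $\epsilon/R_\epsilon$ is itself polarized (by Lemma \ref{polarized}, as a morphic image of the polarized $\epsilon$, or directly as it is isomorphic to $\phi/R_\phi$) and has trivial radical (its radical is $R_\epsilon/R_\epsilon = 0$, which is the content of the computation at the end of the proof of Theorem \ref{polarizedthm}). Hence $\epsilon/R_\epsilon$ is a polarized embedding of $\Gamma$ with trivial radical, establishing existence.

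For uniqueness, suppose $\phi$ and $\phi'$ are two polarized embeddings of $\Gamma$ with trivial radical. Applying Theorem \ref{polarizedthm} to each, and using $R_\phi = R_{\phi'} = 0$, we get $\phi = \phi/R_\phi \cong \epsilon/R_\epsilon$ and likewise $\phi' \cong \epsilon/R_\epsilon$, so $\phi \cong \phi'$. This is precisely the final clause of Theorem \ref{polarizedthm}. Therefore the polarized embedding with trivial radical is unique up to isomorphism.

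There is essentially no obstacle here: the corollary is a direct specialization, the only content being the verification that "embeddable" in Theorem \ref{KasikovaShult} is implied by "admits a polarized embedding" (immediate, since a polarized embedding is a projective embedding) and that the listed types in the hypothesis match exactly those for which Theorem \ref{KasikovaShult} guarantees an absolutely universal embedding. The real work has already been done in Theorems \ref{KasikovaShult} and \ref{polarizedthm}.
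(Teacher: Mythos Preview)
Your proposal is correct and matches the paper's intended argument; in fact the paper states Corollary~\ref{cor:KS} without proof, treating it as an immediate consequence of Theorem~\ref{KasikovaShult} together with Theorem~\ref{polarizedthm}, which is exactly the combination you spell out. One tiny wording issue: Lemma~\ref{polarized} says that \emph{covers} of polarized embeddings are polarized, not morphic images, so your first justification that $\epsilon/R_\epsilon$ is polarized is phrased backwards; but your alternative justification (via the isomorphism with $\phi/R_\phi$, or equivalently because $R_\epsilon$ lies in every $\langle \epsilon(p^\perp)\rangle$) is fine, and nothing in the argument is affected.
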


\section{Extremal elements}
\label{sec:ee}

In this section we provide some definitions and collect some results on extremal elements, mainly from \cite{CI07}.

\begin{defi}
Let $\MG$ be a Lie algebra over the field $\MF$.
A nonzero element  $x\in \MG$ is called {\em extremal} if there is a map $g_x:\MG \rightarrow \MF$, called the {\em extremal form} on $x$, such that
\begin{equation} 
\label{extr}
\begin{aligned} 
\big[x,[x,y]\big]=2g_x(y)x
 \end{aligned} \end{equation}
and moreover
\begin{equation} \begin{aligned} \label{P1}
\big[[x,y],[x,z]\big]=g_x\big([y,z]\big)x+g_x(z)[x,y]-g_x(y)[x,z]
 \end{aligned} \end{equation}
and
\begin{equation} \begin{aligned} \label{P2}
\big[x,[y,[x,z]]\big]=g_x\big([y,z]\big)x-g_x(z)[x,y]-g_x(y)[x,z]
 \end{aligned} \end{equation}
for every $y,z\in \MG$.\\
The last two identities are called the {\em Premet identities}.
If the characteristic of $\mathbb{F}$ is not $2$, then the Premet identities follow from Equation \ref{extr}. See \cite{CI07}.
\end{defi}
As a consequence, $\big[ x,[x,\MG]\big]\subseteq \MF x$ for an extremal $x\in \MG$. 
We call  $x\in \MG$ a {\em sandwich} if $[x,[x,y]]=0$ and $[x,[y,[x,z]]]=0$ for every $y,z\in \MG$. So, a sandwich is an element $x$ for which the extremal form $g_x$ can be chosen to be identically zero. 
We introduce the convention that {\em $g_x$ is identically zero whenever $x$ is a sandwich in $\MG$}.
An extremal element is called {\em pure} if it is not a sandwich.

We denote the set of extremal elements of a Lie algebra by $E(\MG)$ or, if $\MG$   is clear from the context, by $E$.
Accordingly, we denote the set  $\{ \MF x| x\in E(\MG)\}$ of {\em extremal points} in the projective space on $\MG$ by $\mathcal{E}(\MG)$ or $\mathcal{E}$.

\begin{exam}\label{examplesl}
Let $V$ be a vector space over a field $\mathbb{F}$ with dual space $V^*$. Suppose $W^*$ is a subspace of $V^*$ annihilating $V$.

On $V\otimes W^*$ we can define a Lie bracket by linear extension of  the following product for pure tensors $v\otimes \phi$ and $w\otimes \psi$:
\begin{align*}
[v\otimes\phi, w\otimes \psi] =& (v\otimes \psi)\phi(w)-(w\otimes \phi)\psi(v).
\end{align*}

A pure tensor $v\otimes \phi$ is called singular if $\phi(v)=0$. 
Let $g$ be the $\mathbb{F}$-bilinear form on $V\otimes W^*$ defined by \[g(v\otimes \phi, w\otimes \psi)=-\psi(v)\phi(w)\] for $v\otimes \phi, w\otimes \psi \in V\otimes W^*$. 
Then for all singular pure tensors $v\otimes \phi$ and tensors $w\otimes\psi$   we have
\begin{align*}
\big[ v\otimes \phi,[v\otimes \phi, w\otimes \psi]\big]
=&\big[v\otimes \phi,\phi(w)v\otimes \psi-\psi(v)w\otimes \phi\big]\\
=&-\psi(v)\phi(w)v\otimes \phi
-\psi(v)\phi(w)v\otimes \phi\\
=& -2\psi(v)\phi(w)v\otimes \phi\\
=& 2 g(v\otimes \phi, w\otimes \psi)v\otimes \phi.
\end{align*}
In characteristic $\neq 2$ this implies that the singular pure tensors are extremal elements in  $V\otimes W^*$. This also holds true in characteristic $2$.
(It is straightforward, but somewhat tedious, to check that the Premet identities also hold.)

An element $v\otimes \phi\in V\otimes V^*$ acts linearly on $V$ by 
$$v\otimes\phi(w)=\phi(w)v$$
for all $w\in V$.
This provides an isomorphism between  $V\otimes V^*$ and the finitary general
 linear Lie algebra $\mathfrak{fgl}(V)$. (A linear map is finitary if its kernel has finite codimension.)
The singular pure tensors in $V\otimes V^*$ (or $V\otimes W^*$) generate (subalgebras of) $\mathfrak{fsl}(V)$, the finitary special Lie algebra, or $\mathfrak{sl}(V)$ in case $V$ has finite dimension.
\end{exam}

\begin{exam}\label{exampleorth}
Let $V$ be a vector space over the field $\mathbb{F}$ of characteristic $\neq 2$ and
$Q:V\rightarrow \mathbb{F}$ a nondegenerate quadratic form
with associated bilinear form $b$.
For $v\in V$ we denote by $b_v$ the linear form $w\in V\mapsto b(v,w)$.
 
Then, for $v,w\in V$ consider the following element in $V\otimes V^*$:
$$s_{v,w}=v\otimes b_w-w\otimes b_v.$$

If $v$ and $w$ are linearly independent and $Q(v)=Q(w)=b(v,w)=0$ 
then we call $s_{v,w}$ a {\em Siegel element}.
Now let $s_{v,w}$ be a Siegel element, then for all $x,y\in V$ we have 
\begin{align*}
b(s_{v,w}(x),y)&=b(b(w,x)v-b(v,x)w,y)\\
&=b(w,x)b(v,y)-b(v,x)b(w,y)\\
&=b(x,b(y,v)w-b(y,w)v)\\
&=-b(x,s_{v,w}(y)).
\end{align*}
The Siegel elements generate (and even linearly span) 
the orthogonal Lie algebra 
$\mathfrak{so}(V,b)=\{s\in \mathfrak{fsl}(V)\mid b(s(x),y)=-b(s(y),x)\}$.

A straightforward computation yields
\begin{align*}
[s_{v,w},[s_{v,w},s_{x,y}]]=(b(w,x)b(w,y)-b(v,x)b(v,y))s_{v,w},
\end{align*}
which shows that $s_{v,w}$ is extremal.
\end{exam}

\bigskip

For $x\in E$  and $\lambda\in\mathbb{F}$ we define the map
$\mathrm{exp}(x,\lambda):\MG\rightarrow \MG$ by

$$\mathrm{exp}(x,\lambda)y=y+\lambda[x,y]+\lambda^2g_x(y)x$$
for all $y\in \MG$.

\begin{prop}\cite[Lemma 15]{CI07}\label{exp}
Let $x\in E$ be a pure and $\lambda\in \mathbb{F}$. 
Then $\mathrm{exp}(x,\lambda)$ is an automorphism of $\MG$.
\end{prop}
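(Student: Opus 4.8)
The statement to prove is that for a pure extremal element $x\in E$ and $\lambda\in\MF$, the map $\mathrm{exp}(x,\lambda):\MG\to\MG$ defined by $\mathrm{exp}(x,\lambda)y = y+\lambda[x,y]+\lambda^2 g_x(y)x$ is an automorphism of $\MG$.

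The plan is to verify two things: that $\mathrm{exp}(x,\lambda)$ is a bijective linear map, and that it respects the Lie bracket. For linearity, note that $y\mapsto[x,y]$ is linear and $g_x$ is linear (being the extremal form, which is forced to be linear by identity~(\ref{extr}) together with bilinearity of the bracket), so $\mathrm{exp}(x,\lambda)$ is a sum of linear maps, hence linear. For bijectivity, I would check directly that $\mathrm{exp}(x,-\lambda)$ is a two-sided inverse: composing the two maps and using $[x,[x,y]]=2g_x(y)x$, $[x,x]=0$, and $g_x(x)=0$ (the latter since $[x,[x,x]]=0=2g_x(x)x$ and $x\neq 0$) should collapse all cross terms, leaving the identity. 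This is a short computation.

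The substantive part is the multiplicativity $\mathrm{exp}(x,\lambda)[y,z] = [\mathrm{exp}(x,\lambda)y,\ \mathrm{exp}(x,\lambda)z]$. I would expand the right-hand side fully:
\[
[y,z] + \lambda\big([y,[x,z]]+[[x,y],z]\big) + \lambda^2\big(g_x(z)[y,x] + g_x(y)[x,z] + [[x,y],[x,z]]\big) + \lambda^3\big(\text{terms with }x\big).
\]
The left-hand side is $[y,z] + \lambda[x,[y,z]] + \lambda^2 g_x([y,z])x$. Matching the $\lambda^0$ terms is immediate. For the $\lambda^1$ terms, the Jacobi identity gives $[y,[x,z]]+[[x,y],z] = [y,[x,z]]-[[x,z],y]\cdot$... more carefully, $[[x,y],z] = [x,[y,z]] - [y,[x,z]]$ by Jacobi, so the $\lambda^1$ coefficient on the right is exactly $[x,[y,z]]$, matching the left. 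For the $\lambda^2$ terms, I would invoke the first Premet identity~(\ref{P1}): $[[x,y],[x,z]] = g_x([y,z])x + g_x(z)[x,y] - g_x(y)[x,z]$, so that $g_x(z)[y,x]+g_x(y)[x,z]+[[x,y],[x,z]] = g_x([y,z])x$, which matches the left-hand side's $\lambda^2$ term. Finally, the $\lambda^3$ terms on the right must be shown to vanish; collecting them, they are (up to sign and scalar) combinations like $g_x(z)[[x,y],x]$, $g_x(y)[x,[x,z]]$, etc., and these cancel using $[[x,y],x]=-[x,[x,y]]=-2g_x(y)x$ together with $[x,x]=0$ — one should check the surviving multiples of $x$ cancel, which they do because of the antisymmetric pairing of $g_x(y),g_x(z)$.

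I expect the main obstacle to be bookkeeping in the $\lambda^2$ and $\lambda^3$ coefficients: one must be careful to collect every term produced by bilinear expansion (there are nine products at order $\lambda^2$ once one counts the $y\leftrightarrow\lambda[x,y]\leftrightarrow\lambda^2 g_x(y)x$ pieces against the corresponding pieces of $z$), and the cancellation at order $\lambda^2$ is precisely the content of Premet identity~(\ref{P1}) — without it the proof fails, which is why (\ref{P1}) is imposed as part of the definition in characteristic~$2$. In characteristic $\neq 2$ one could instead cite that (\ref{P1}) follows from (\ref{extr}). Since the cited source \cite[Lemma 15]{CI07} already records this, in the paper itself it suffices to remark that the verification is the straightforward expansion just outlined, using Jacobi for the linear term and the first Premet identity for the quadratic term, with the cubic term vanishing by $[x,x]=0$ and $g_x(x)=0$; the identity~(\ref{P2}) is not needed here.
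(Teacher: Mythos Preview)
The paper itself gives no proof of this proposition: it is stated with a citation to \cite[Lemma~15]{CI07} and nothing more. Your proposal supplies the standard direct verification that the cited lemma encodes, and the multiplicativity computation you outline is correct: the $\lambda^1$-coefficient matches via Jacobi, the $\lambda^2$-coefficient matches precisely by the first Premet identity~(\ref{P1}), and the $\lambda^3$- and $\lambda^4$-coefficients vanish by $[x,[x,y]]=2g_x(y)x$ and $[x,x]=0$.

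One small omission in your bijectivity sketch: when you compose $\exp(x,-\lambda)\circ\exp(x,\lambda)$ and expand, a term $\lambda^3 g_x([x,y])\,x$ survives in addition to the $\lambda^4 g_x(y)g_x(x)\,x$ term you anticipate, so you also need $g_x([x,y])=0$. This follows readily (e.g.\ set $y=x$ in~(\ref{P1}) to get $g_x([x,z])x = g_x(x)[x,z]$, and combine with $g_x(x)=0$), but it is worth recording since you flag bookkeeping as the main hazard. Your remark that~(\ref{P2}) is not needed for the multiplicativity step is correct; whether it is entirely avoidable in characteristic~$2$ for the auxiliary facts $g_x(x)=0$ and $g_x([x,y])=0$ is a finer point that the reference handles.
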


Let $x\in E$ be pure.
By $\mathrm{Exp}(x)$ we denote the set $\{\mathrm{exp}(x,\lambda)\mid \lambda\in \mathbb{F}\}$.
Since, for $\lambda,\mu\in \mathbb{F}$ we have $\mathrm{exp}(x,\lambda)\mathrm{exp}(x,\mu)=\mathrm{exp}(x,\lambda+\mu)$,
 we find that $\mathrm{Exp}(x)$ is a subgroup of $\mathrm{Aut}(\MG)$ 
isomorphic to the additive group of $\mathbb{F}$.

Clearly, $\mathrm{Exp}(x)=\mathrm{Exp}(\lambda x)$ for $\lambda\in \mathbb{F}^*$. 
Therefore we can define  $\mathrm{Exp}(\langle x\rangle)$ to be equal to  $\mathrm{Exp}(x)$.

\begin{prop}\cite[Lemma 24-27]{CI07}
\label{2generators}
For $x,y\in E$ we have one of the following:

\begin{enumerate}[\rm (a)]
\item $\mathbb{F}x=\mathbb{F}y$;
\item $[x,y]=0$ and $\lambda x+\mu y\in E\cup \{0\}$ for all $\lambda,\mu\in \mathbb{F}$;
\item $[x,y]=0$ and $\lambda x+\mu y\in E$ only if $\lambda=0$ or $\mu=0$;
\item $z:=[x,y]\in E$, and $x,z$ and $y,z$ are as in case {\rm (b)};
\item $\langle x,y\rangle $ is isomorphic to $\mathfrak{sl}_2(\mathbb{F})$.
\end{enumerate}

Moreover, $g_x(y)\neq 0$ if and only if $\langle x,y\rangle $ is isomorphic to $\mathfrak{sl}_2(\mathbb{F})$.
\end{prop}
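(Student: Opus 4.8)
The plan is to analyse the subalgebra $A := \langle x,y\rangle$ via its dimension. From the extremal identity (\ref{extr}), applied to $x$ and to $y$, one gets $[x,[x,y]] = 2g_x(y)x$ and $[y,[x,y]] = -2g_y(x)y$, so with $z := [x,y]$ the subspace $\MF x + \MF y + \MF z$ is closed under the bracket; hence $A = \MF x + \MF y + \MF z$ and $\dim A \le 3$. I would then record two facts. First, if $x,y$ are linearly independent and $z \ne 0$, then $x,y,z$ are linearly independent: writing $z = ax + by$ and bracketing with $x$ and then with $y$, comparison with the displayed products forces successively $b = 0$, $g_x(y) = 0$, and $a = 0$, contradicting $z \ne 0$. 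Second, when $x,y,z$ are linearly independent the Jacobi identity for the triple $x,y,z$ collapses to $(g_x(y) - g_y(x))z = 0$, so $g_x(y) = g_y(x)$.

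Now I split on the value of $g_x(y)$. If $g_x(y) = c \ne 0$, then $z \ne 0$, $x,y$ are independent, hence so are $x,y,z$ and $g_y(x) = c$; the table $[x,y] = z$, $[x,z] = 2cx$, $[y,z] = -2cy$ is carried to the standard $\MSL(\MF)$-table by $e := x$, $f := -c^{-1}y$, $h := -c^{-1}z$, giving case (e). For the converse, and hence the ``Moreover'' clause: the extremal elements of $\MSL(\MF)$ are the nonzero nilpotents, any two non-proportional ones generate $\MSL(\MF)$, and then $[x,[x,y]] \ne 0$, i.e.\ $g_x(y) \ne 0$. (In characteristic $2$ one argues along the lines of \cite{CI07}, using the additional extremality conditions of Section \ref{sec:ee} and the Premet identities in place of (\ref{extr}).)

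Suppose next that $g_x(y) = 0$; then also $g_y(x) = 0$ (by the second fact when $z \ne 0$, trivially when $z = 0$). If $\MF x = \MF y$ we are in case (a), with $g_x(x) = 0$ since $[x,[x,x]] = 0$. Otherwise $x,y$ are independent. If $z := [x,y] \ne 0$, then $[x,z] = [y,z] = 0$ and $A$ is the Heisenberg algebra on $x,y,z$; this is case (d). I would verify it by first showing $z \in E$: expanding $[z,[z,b]] = [[x,y],[[x,y],b]]$ with the Jacobi identity and the Premet identity (\ref{P1}) for $x$ and for $y$ yields $[z,[z,b]] = (g_x([y,b]) - g_y([x,b]))z$, which together with the characteristic-$2$ check of the Premet identities for $z$ gives $z \in E$. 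To see that $x,z$ (and symmetrically $y,z$) are as in case (b), one expands $[\lambda x + \mu z,[\lambda x + \mu z,b]]$ using (\ref{extr}), (\ref{P1}), (\ref{P2}) for $x$; the coefficients of $x$ and of $z$ coincide exactly because $g_x([y,b]) + g_y([x,b]) = 0$, an identity obtained by applying $\mathrm{ad}_y$ to the Premet expansion of $[z,[x,b]]$ and using $[y,z] = 0$. Hence every $\lambda x + \mu z$ lies in $E \cup \{0\}$. (When $x,y$ are pure this is immediate from Proposition \ref{exp}, since $\mathrm{exp}(y,s)x = x - sz$ and $\mathrm{exp}(x,t)y = y + tz$.)

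The remaining case is $g_x(y) = 0$, $x,y$ independent, $z = 0$. Here the Jacobi identity gives $[\lambda x + \mu y,[\lambda x + \mu y,b]] = 2\lambda^2 g_x(b)x + 2\mu^2 g_y(b)y + 2\lambda\mu[x,[y,b]]$. If $[x,[y,\g]] \not\subseteq \MF x + \MF y$, the cross term is not a multiple of $\lambda x + \mu y$ unless $\lambda\mu = 0$, and we are in case (c); otherwise I expect case (b), writing $[x,[y,b]] = \phi(b)x + \psi(b)y$ and showing $\phi = g_y$ and $\psi = g_x$, after which $[\lambda x + \mu y,[\lambda x + \mu y,b]] = 2(\lambda g_x(b) + \mu g_y(b))(\lambda x + \mu y)$ for all $\lambda,\mu$. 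I expect this last identification — equivalently the additivity $g_{x+y} = g_x + g_y$ — to be the main obstacle; it is seen most transparently from the symmetric bilinear form on $\g$ attached to the extremal elements (cf.\ \cite{CSUW01}, \cite{CI07}) rather than from the Premet identities alone. The residual task, verifying that the newly produced extremal elements $\lambda x + \mu y$, $z$ and $\lambda x + \mu z$ satisfy the Premet identities in characteristic $2$, is routine but tedious and is precisely where the extra conditions of Section \ref{sec:ee} enter.
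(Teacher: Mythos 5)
Since the paper offers no proof of this proposition (it is quoted verbatim from \cite{CI07}*{Lemmas 24--27}), your attempt has to be judged on its own merits. Most of it is sound in characteristic $\neq 2$: the reduction to $\MF x+\MF y+\MF z$, the independence of $x,y,z$ when $z\neq 0$, the symmetry $g_x(y)=g_y(x)$ via Jacobi, the $\mathfrak{sl}_2$-identification and its converse, the computation $[z,[z,b]]=(g_x([y,b])-g_y([x,b]))z$, the auxiliary identity $g_x([y,b])+g_y([x,b])=0$ (which is indeed obtainable exactly as you indicate, by applying $\mathrm{ad}_y$ to the expansion of $[z,[x,b]]$ via \ref{P1} and using $[y,z]=0$), and the resulting verification that $\lambda x+\mu z\in E\cup\{0\}$ all check out. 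The wholesale deferral of the characteristic-$2$ verifications is a weakness, but tolerable given that the paper itself cites \cite{CI07} for the entire statement.

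There is, however, a genuine gap in the last branch ($[x,y]=0$, $x,y$ independent, $g_x(y)=0$): the dichotomy you split on, namely whether $[x,[y,\g]]\subseteq \MF x+\MF y$, does not separate case (b) from case (c), and the identification $\phi=g_y$, $\psi=g_x$ that you hope to prove is false in general. Take $\g=\MSL(\MF)\oplus\MSL(\MF)$ with $x=(e,0)$ and $y=(0,e)$ for a nilpotent $e$: then $[x,y]=0$, $g_x(y)=0$ and $[x,[y,\g]]=0\subseteq \MF x+\MF y$, so $\phi=\psi=0$, yet $g_x\neq 0\neq g_y$ and no $\lambda x+\mu y$ with $\lambda\mu\neq 0$ is extremal; this pair is in case (c), not (b). The correct pivot between (b) and (c) is whether \emph{some} combination $\lambda x+\mu y$ with $\lambda\mu\neq 0$ lies in $E$; the nontrivial implication is then ``some implies all'', and that is where the additivity $g_{x+y}=g_x+g_y$ legitimately enters --- it is supplied by the bilinearity of the extremal form (Proposition \ref{span}, which precedes Lemmas 24--27 in \cite{CI07}, so no circularity), after which your own computation of $[\lambda x+\mu y,[\lambda x+\mu y,b]]$ finishes the branch. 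So the obstacle you flagged is real, but it is attached to the wrong condition: as written, your plan for the ``$\subseteq$'' subcase would be trying to prove a false statement, and the (b)/(c) dichotomy remains unproved.
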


In each of the  cases described in the above proposition we find that the subalgebra generated by two extremal elements $x$ and $y$ is linearly spanned by all its extremal elements.
Indeed, this is trivial in the cases that $x$ and $y$ commute. If $x$ and $y$ do not commute, 
then $\mathrm{exp}(x,1)y=y+[x,y]+g_x(y)y$ is extremal and spans together with $x$ and $y$ the subalgebra $\langle x,y\rangle$.

Moreover, inside $\MSL$ we can see that $g_x(y)=g_y(x)$. This leads to the the following.

\begin{prop}\cite[Proposition 20]{CI07}\label{span}
\label{extremalform} 
Let $\MG$ be generated by $E(\MG)$. Then $\MG$ is linearly spanned by $E$ and 
there is a unique bilinear symmetric form $g:\MG \times \MG \rightarrow \MF$ such 
that the linear form $g_x$ coincides with $y\mapsto g(x,y)$ for each $x\in E$. 
Moreover, this form is associative in the sense that $g(x,[y,z])=g([x,y],z)$ for all $x,y,z\in \MG$.
\end{prop}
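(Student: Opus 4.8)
The plan is to establish three things in order: first, that $\MG$ is linearly spanned by $E(\MG)$; second, that the extremal forms $g_x$ patch together into a single well-defined symmetric bilinear form $g$; and third, that $g$ is associative. For the spanning statement, I would argue that the $\MF$-span $S$ of $E(\MG)$ is in fact a subalgebra: by Proposition \ref{2generators} and the remark immediately preceding this proposition, for any two extremal elements $x,y$ the subalgebra $\langle x,y\rangle$ is linearly spanned by extremal elements, so in particular $[x,y]\in S$. Extending bilinearly, $[S,S]\subseteq S$, so $S$ is a subalgebra containing every extremal element; since $\MG$ is generated by $E(\MG)$, this forces $S=\MG$.

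For the bilinear form, the key input is the observation recorded just before the proposition that $g_x(y)=g_y(x)$ whenever $x,y\in E$; this is seen by explicit computation inside $\langle x,y\rangle\cong\MSL$ in case (e) of Proposition \ref{2generators}, and is immediate (both sides zero, after the sandwich convention) in the remaining cases. The idea is then to define $g$ on pairs of extremal elements by $g(x,y):=g_x(y)$ and to extend bilinearly using the spanning result. To see this is well defined I would note that for fixed $x\in E$ the map $y\mapsto g_x(y)$ is already $\MF$-linear on all of $\MG$ by definition of the extremal form, so the only issue is consistency in the first argument; this follows because on $E\times E$ the prescription agrees with the symmetric expression $g_x(y)=g_y(x)$, and linearity in the second slot plus symmetry on $E\times E$ propagates to give a consistent bilinear extension. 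Concretely: writing an arbitrary element of $\MG$ as a finite $\MF$-combination of extremal elements, bilinearity forces the value of $g$, and symmetry on the generating set $E\times E$ gives symmetry of $g$ everywhere; linearity in the second argument then matches the intrinsic linearity of each $g_x$, so $g_x(\cdot)=g(x,\cdot)$ for every $x\in E$ as required. Uniqueness is clear, since $g$ is pinned down on $E\times E$ and $E$ spans $\MG$.

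For associativity, $g(x,[y,z])=g([x,y],z)$, the cleanest route is to reduce to the case where all three arguments are extremal, by trilinearity of both sides. So let $x,y,z\in E$. Here I would use identity \eqref{P2}, $\big[x,[y,[x,z]]\big]=g_x([y,z])x-g_x(z)[x,y]-g_x(y)[x,z]$, together with the defining relation \eqref{extr}: applying $\mathrm{ad}_x$ twice in suitable orders and comparing coefficients of $x$ relates $g_x([y,z])$ to $g_{[x,y]}(z)$ and $g_{[x,z]}(y)$, which via the symmetry $g(a,b)=g(b,a)$ and the Premet identity \eqref{P1} rearranges into $g(x,[y,z])=g([x,y],z)$. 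The main obstacle I anticipate is precisely this last step: squeezing associativity out of the Premet identities requires care about characteristic $2$ (where the Premet identities are extra axioms rather than consequences of \eqref{extr}, and where the factor $2$ in \eqref{extr} must be handled via the extremal form directly), and one must make sure the bookkeeping of which elements are pure versus sandwiches is consistent with the convention $g_x\equiv 0$ for sandwiches. Everything else — the subalgebra argument and the patching of the form — is routine once Proposition \ref{2generators} and the preceding remarks are in hand.
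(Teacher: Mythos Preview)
The paper does not actually prove this proposition; it is quoted from \cite{CI07} and stated without proof. The only argumentation the paper supplies is the short paragraph immediately preceding the statement, which explains (i) why the subalgebra generated by any two extremal elements is linearly spanned by extremal elements (via Proposition~\ref{2generators} and the action of $\mathrm{exp}(x,1)$), and (ii) that $g_x(y)=g_y(x)$ by computing inside $\mathfrak{sl}_2$. Your treatment of the spanning statement and of the construction and well-definedness of the bilinear symmetric form $g$ matches these hints and is correct: the key point---that a linear relation $\sum_i\lambda_i x_i=0$ among extremal elements forces $\sum_i\lambda_i g_{x_i}(y)=0$ because this equals $g_y(\sum_i\lambda_i x_i)$ for $y\in E$, and $E$ spans---is exactly right.

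Where your proposal is genuinely incomplete is the associativity. You reduce correctly to $x,y,z\in E$, but then the sentence ``applying $\mathrm{ad}_x$ twice in suitable orders and comparing coefficients of $x$ relates $g_x([y,z])$ to $g_{[x,y]}(z)$ and $g_{[x,z]}(y)$'' is not an argument: $[x,y]$ need not be extremal (in the $E_2$ case it is the semisimple element of an $\mathfrak{sl}_2$), so there is no $g_{[x,y]}$ to invoke, and the Premet identities \eqref{P1}--\eqref{P2} by themselves only produce expressions of the form $g_x([y,z])$, never $g([x,y],z)$ directly. The actual proof in \cite{CI07} handles associativity by a separate and more delicate computation, and you have not reproduced it here. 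Since the present paper does not supply that argument either, there is nothing further to compare against; but you should be aware that this step is the substantive one and your sketch does not close it.
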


The form $g$ is called the {\em extremal form} on $\mathfrak{g}$.
As the form $g$ is associative, its radical $\rad(g):=\{ u\in \mathfrak{g} | g_u(z)=0 \ \forall\  z\in \mathfrak{g}\}$ is an ideal in $\mathfrak{g}$. 

Notice that the extremal form $f$ from \cite{CSUW01} satisfies $f=2g$.

\bigskip

\begin{defi}\label{namesofpairs}
For $x,y \in E$ extremal elements we define
\[ (x,y) \in \left\{
   \begin{array}{ll}
     E_{-2}, & \hbox{ $\Longleftrightarrow \mathbb{F}x=\mathbb{F}y$, } \\
    E_{-1}, & \hbox{ $\Longleftrightarrow [x,y]=0, (x,y)\notin E_{-2} \text{ and } \mathbb{F}x +\mathbb{F}y\subseteq E \cup \{0\}$ ,} \\
     E_{0}, & \hbox{ $\Longleftrightarrow [x,y]=0 \text{ and }  (x,y)\notin E_{-2}\cup  E_{-1} $,} \\
    E_{1}, & \hbox{ $\Longleftrightarrow [x,y]\neq 0 \text{ and } g(x,y)  =0 $,} \\
    E_{2}, & \hbox{ $\Longleftrightarrow g(x,y)  \neq 0 $.}
   \end{array}
 \right.
\]\\

For the corresponding extremal points $\langle x\rangle, \langle y\rangle$, we define
\[ (\langle x \rangle ,\langle y \rangle)\in \mathcal{E}_i \Longleftrightarrow (x,y) \in E_i.\]

Let $x\in E$. Then $y\in E_i(x)$ denotes that $(x,y)\in E_i$.  By $E_{\leq i}(x)$ we denote the set $\bigcup_{-2\leq j\leq i} E_j(x)$.
Similarly, if $x\in \mathcal{E}$, then $\mathcal{E}_i(x)$ consists of all
$y$ with  $(x,y)\in \mathcal{E}_i$, and   $\mathcal{E}_{\leq i}(x)$ denotes $\bigcup_{-2\leq j\leq i} \mathcal{E}_j(x)$.
\end{defi}

\begin{prop}\label{orbit}
If $x,y\in E$ with $g_x(y)\neq 0$, then $\langle x\rangle$ and $\langle y\rangle$ are in the same $\mathrm{Aut}(\g)$-orbit.
In particular, if the graph on $(\E,\E_2)$ is connected, then the set of all extremal points is  a single $\mathrm{Aut}(\g)$-orbit. 
\end{prop}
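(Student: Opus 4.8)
The plan is to exhibit an automorphism of $\MG$ carrying $\langle x\rangle$ to $\langle y\rangle$ as a threefold composite of exponential automorphisms, imitating inside $\MG$ the classical way of writing the Weyl-group element of $SL_2$ as a product of three elementary unipotent matrices.

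First I would dispose of the purity issues. Since $g_x(y)\neq 0$, the element $x$ is not a sandwich, hence pure; and since $g_y(x)=g_x(y)\neq 0$ (the symmetry of the extremal form recorded just before Proposition \ref{span}, which also gives $\langle x,y\rangle\cong\mathfrak{sl}_2(\MF)$), the element $y$ is pure as well. Put $t:=g_x(y)\in\MF^\times$. By Proposition \ref{exp} the maps $\mathrm{exp}(x,1)$ and $\mathrm{exp}(y,t^{-1})$ are automorphisms of $\MG$, hence so is
\[
w:=\mathrm{exp}(x,1)\circ\mathrm{exp}(y,t^{-1})\circ\mathrm{exp}(x,1).
\]

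Next I would compute $w(x)$ directly from the defining formula $\mathrm{exp}(z,\lambda)u=u+\lambda[z,u]+\lambda^2 g_z(u)z$, using the standard vanishings $g_x(x)=0$ and $g_x([x,y])=0$. One gets $\mathrm{exp}(x,1)(x)=x$, then $\mathrm{exp}(y,t^{-1})(x)=x-t^{-1}[x,y]+t^{-1}y$, and then, applying $\mathrm{exp}(x,1)$ to the latter and using $[x,[x,y]]=2t\,x$, the $[x,y]$- and $x$-components cancel, leaving $w(x)=t^{-1}y$. This calculation is valid in every characteristic: when $\chara(\MF)=2$ the terms $2t\,x$ simply vanish, but the cancellations are the same. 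Hence $w(\langle x\rangle)=\langle y\rangle$, so $\langle x\rangle$ and $\langle y\rangle$ lie in a common $\mathrm{Aut}(\MG)$-orbit, which proves the first assertion. For the last statement, belonging to the same $\mathrm{Aut}(\MG)$-orbit is an equivalence relation on $\E$, and the first part shows that any two $\E_2$-adjacent extremal points are equivalent; if $(\E,\E_2)$ is connected, then any two extremal points are joined by a path all of whose edges lie in $\E_2$, and chaining the equivalences along the path puts them in the same orbit, so $\E$ is a single $\mathrm{Aut}(\MG)$-orbit.

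I do not anticipate a genuine obstacle: the whole argument is the $\mathfrak{sl}_2$-calculation transported to $\MG$ through Proposition \ref{exp}. The points demanding care are that $y$ must be checked to be pure before one may invoke $\mathrm{exp}(y,\cdot)$ — this is exactly where $g_y(x)=g_x(y)$ enters — and the scalar bookkeeping in the threefold composite, which has to be arranged so that the off-target components of $w(x)$ cancel; one should also make sure, as above, that nothing degenerates when $\chara(\MF)=2$.
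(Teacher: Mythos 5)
Your proof is correct and takes essentially the same route as the paper: exhibiting an explicit composite of the exponential automorphisms of Proposition \ref{exp} attached to the pure pair $x,y$ generating an $\mathfrak{sl}_2$, and then chaining along $\E_2$-paths for the connectedness statement. The paper is merely more economical: after rescaling $y$ so that $g_x(y)=g_y(x)=1$ it observes that $\exp(x,1)y$ coincides with the image of $x$ under a suitable $\exp(y,\pm1)$, so a twofold composite already carries $y$ to $x$ on the nose, whereas you use the threefold Weyl-element composite sending $x$ to $t^{-1}y$ — both are valid in all characteristics.
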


\begin{proof}
Let $x,y\in E$ with $g_x(y)=g_y(x)\neq 0$. Then, after replacing $y$ by a scalar multiple,  we can assume $g_x(y)=g_y(x)=1$.
But then $\mathrm{exp}(x,1)y=y+[x,y]+x=\mathrm{exp}(y,1)x$.
So, $\mathrm{exp}(y,-1)\mathrm{exp}(x,1)$ maps  $y$ to $x$. The proposition follows immediately. 
\end{proof}

\begin{defi}\label{defiextremalgeometry}
Let $\ME$ be the set of extremal points of the Lie algebra $\MG$ and let $\mathcal{F}$ be the set of projective lines $\MF x +\MF y$ for $(x,y)\in \ME_{-1}$. Hereby, we identify a $2$-space with the set of $1$-spaces it contains.
Then the point-line space $(\ME, \mathcal{F})$ is called the {\em extremal geometry} of $\MG$. We  denote it by $\Gamma(\MG)$, or in case it is clear what $\MG$ is, by $\Gamma$.

The {\em  rank} of $\Gamma(\MG)$ is the maximal dimension (as a linear subspace of $\MG$) of a 
subspace $X$ of $\Gamma(\MG)$ in which any two points are collinear.
\end{defi}

\begin{exam} 
Let $V$ be a vector space over a field 
$\mathbb{F}$ and $W^*$ a subspace of $V^*$ annihilating $V$.
Let $\ME$ be the set of extremal points of $\langle v\otimes \phi$
 where $v\in V$ and $\phi\in W^*$ with $\phi(v)=0$.
Then each extremal point $\langle v\otimes \phi$ corresponds to an incident point-hyperplane pair $(\langle v\rangle,\mathrm{ker}(\phi))$ of $\mathbb{P}(V)$.
The extremal geometry with point set $\ME$ is isomorphic to the geometry 
described in Example \ref{exampleproj}. In particular, if $V$ has dimension $n+1<\infty$, then the extremal geometry is a root shadow space of type $\mathrm{A}_{n,\{1,n\}}$.  

The extremal geometry is embedded into a subspace the projective  space
$\mathbb{P}(V\otimes W^*)$.
\end{exam}

\begin{exam}
Consider a vector space $V$ over a field $\mathbb{F}$ of characteristic $\neq 2$, and equipped with a nondegenerate quadratic form $Q$ of Witt index at least $2$.

Then we can consider the extremal geometry whose point set
is the set of extremal points generated by Siegel elements.
To each such extremal point  corresponds a unique isotropic line on $(V,Q)$.
The extremal geometry is isomorphic  to the geometry described in
Example \ref{examplepolar}.
Thus, if the Witt index of $Q$ is finite and greater than $2$, then 
the extremal geometry is a root shadow space of type $\mathrm{BC}_{n,2}$
or $\mathrm{D}_{n+1,2}$.

Again the extremal geometry is embedded into the subspace of
$\mathbb{P}(V\otimes V^*)$ generated by the extremal points.
\end{exam}

\begin{prop}\cite{CI07}\label{geometry}
Let $x,y\in \mathcal{E}$. 
Then we have the following:

\begin{enumerate}[{\rm (a)}]
\item $(x,y)\in \mathcal{E}_{-2}$ $\Leftrightarrow x=y$;
\item $(x,y)\in \mathcal{E}_{-1}$ $\Leftrightarrow$ $x$ and $y$ are distinct but collinear in $\Gamma(\MG)$;
\item $(x,y)\in \mathcal{E}_{0}$ $\Leftrightarrow$ $x$ and $y$ are at distance $2$ and have $\geq 2$ common neighbors in $\Gamma(\MG)$;
\item $(x,y)\in \mathcal{E}_{1}$ $\Leftrightarrow$ $x$ and $y$ are at distance $2$ and have a unique common neighbor in $\Gamma(\MG)$, the point $[x,y]$;
\item $(x,y)\in \mathcal{E}_{2}$ $\Leftrightarrow$ $x$ and $y$ are at distance $3$ in $\Gamma(\MG)$ $\Leftrightarrow$ $\langle x,y\rangle\cong \mathfrak{sl}_2(\mathbb{F})$.
\end{enumerate}
\end{prop}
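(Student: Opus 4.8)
The plan is to prove each equivalence in Proposition \ref{geometry} by translating the Lie-theoretic classification of pairs of extremal elements from Proposition \ref{2generators} into statements about the collinearity graph of $\Gamma(\MG)$, using the definition of the line set $\MCF$ and the characterization of the cases in Definition \ref{namesofpairs}. Part (a) is immediate: $(x,y) \in \E_{-2}$ means $\MF x = \MF y$ by definition, i.e.\ $x = y$ as projective points. Part (b) is essentially the definition of a line in $\Gamma(\MG)$: if $(x,y) \in \E_{-1}$, then $\MF x + \MF y$ is by construction a line of $\MCF$, so $x$ and $y$ are distinct and collinear; conversely, if $x$ and $y$ lie on a common line $\ell \in \MCF$, then $\ell = \MF a + \MF b$ for some $(a,b) \in E_{-1}$, and since every nonzero vector of $\ell$ is then extremal and any two of them commute (the bracket is bilinear and alternating on a $2$-space all of whose vectors square to a scalar times themselves under $\mathrm{ad}^2$), representatives of $x$ and $y$ commute and span an extremal plane, so $(x,y) \in \E_{-1}$.

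For the remaining three parts I would first establish the easy direction of (e): if $\langle x,y\rangle \cong \MSL$, then $g_x(y) \neq 0$ by the last line of Proposition \ref{2generators}, so $(x,y) \in \E_2$; conversely $(x,y) \in \E_2$ means $g(x,y)\neq 0$, which by the same line forces $\langle x,y\rangle \cong \MSL$. So $\E_2$ is exactly the $\MSL$-case. It then remains to compute, in each of the five cases of Proposition \ref{2generators}, the distance between $\langle x\rangle$ and $\langle y\rangle$ in $\Gamma$ and the number of common neighbours, and to check these data are pairwise distinct across the cases $\E_0, \E_1, \E_2$ (the cases $\E_{-2}, \E_{-1}$ being distance $0$ and $1$ respectively, already handled). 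Concretely: in case $\E_2$ (the $\MSL$ case), I would show $\langle x\rangle$ and $\langle y\rangle$ have no common neighbour, hence are at distance $\geq 3$, and exhibit a path of length $3$ — e.g.\ normalize so $g_x(y)=1$, set $h = [x,y]$, and use $\mathrm{exp}(x,1)y = y + h + x$ and suitable further extremal elements so that $\langle x\rangle \sim \langle x + \text{(something)}\rangle \sim \cdots \sim \langle y\rangle$ — giving distance exactly $3$. For case $\E_1$, where $z := [x,y] \in E$ and $(x,z), (y,z)$ are of type $E_{-1}$, the point $\langle z\rangle$ is a common neighbour; I would show it is the \emph{unique} common neighbour (any common neighbour $\langle w\rangle$ commutes with both $x$ and $y$ and lies in $\langle x,y\rangle$, forcing $\langle w\rangle = \langle z\rangle$ up to scalars after a short computation in the $3$-dimensional algebra $\langle x,y\rangle$), so the distance is $2$ with a unique common neighbour. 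For case $\E_0$, where $[x,y]=0$ but $\MF x + \MF y \not\subseteq E \cup \{0\}$, I would produce at least two common neighbours: since $\MG$ is generated by $E$, one can find an extremal $w$ with $g_w(x) \neq 0 \neq g_w(y)$ or otherwise use $\mathrm{exp}$-maps to move into a higher-rank configuration, and then within a symplectic or polar-space type subgeometry locate two collinear-to-both points — alternatively, cite that the rank of $\Gamma$ is $\geq 2$ so that lines through $x$ and through $y$ inside a symp meeting both exist.

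The main obstacle I expect is the $\E_0$ case: showing there are \emph{at least two} common neighbours requires knowing something about the global structure of $\Gamma$, not just the local two-generated subalgebra, since the pair $(x,y)$ with $[x,y]=0$ generates only an abelian (or small) subalgebra that by itself carries no common neighbour. The clean way around this is to invoke the results of \cite{CI07} on the extremal geometry — in particular that symplectic subspaces ("symps") exist and that two commuting non-collinear extremal points in the same symp have a whole line of common neighbours — so the real content is a citation-assembly rather than a from-scratch argument; in a self-contained treatment one would instead have to carry along the structure theory of $\mathrm{exp}$-generated subalgebras. A secondary, more mechanical point is the distance-exactly-$3$ claim in the $\MSL$ case: one must verify both that no common neighbour exists (an $\mathrm{ad}$-nilpotency / weight argument inside $\MSL$ acting on $\MG$) and that a length-$3$ path does exist, which again uses the $\mathrm{Exp}$-action transitively on extremal points in the relevant orbit (Proposition \ref{orbit}). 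Everything else reduces to bookkeeping with Proposition \ref{2generators} and the definitions.
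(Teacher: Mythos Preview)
The paper does not prove this proposition at all: it is stated with the citation \cite{CI07} and no proof is given, so there is no ``paper's approach'' to compare against. What you have written is a reasonable reconstruction of how the argument in \cite{CI07} is organized, and you have correctly located the genuine difficulties (the $\E_0$ case and the existence of a length-$3$ path in the $\E_2$ case) as requiring global input about the extremal geometry --- symps, transitivity of $\mathrm{Exp}$-groups --- rather than just the local two-generator classification.

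One concrete gap in your sketch: in the $\E_1$ case you claim that any common neighbour $\langle w\rangle$ ``lies in $\langle x,y\rangle$'', but there is no reason for that. A common neighbour $w$ satisfies $[x,w]=[y,w]=0$ and $\MF x+\MF w,\ \MF y+\MF w\subseteq E\cup\{0\}$; nothing forces $w$ into the $3$-dimensional subalgebra $\langle x,y\rangle$. The uniqueness of the common neighbour in the $\E_1$ case is genuinely delicate and in \cite{CI07} is obtained from the structure of the extremal geometry, not from a computation inside $\langle x,y\rangle$. By contrast, your idea for excluding a common neighbour in the $\E_2$ case \emph{does} work cleanly: if $[x,w]=[y,w]=0$ and $x+w\in E$ then $[x+w,[x+w,y]]=[x,[x,y]]=2g(x,y)x$, while extremality of $x+w$ gives $2g(x,y)(x+w)$, forcing $w=0$ (in odd characteristic; use the Premet identities in characteristic $2$).

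So: parts (a), (b) and the algebraic half of (e) are fine as you wrote them; the no-common-neighbour half of (e) is salvageable along the lines above; but the remaining implications --- uniqueness in (d), at-least-two in (c), and the existence of a path of length $3$ in (e) --- genuinely require the structural results on root filtration spaces developed in \cite{CI07}, which is exactly why the present paper cites rather than reproves the proposition.
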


The following lemma characterizes collinearity.

\begin{lem}\cite[Lemma 27]{CI07}\label{charcoll}
Let $x,y\in E$ be linearly independent. Then $(x,y)\in E_{-1}\Leftrightarrow$
there are $\lambda,\mu\in \mathbb{F}^*$ with $\lambda x+\mu y\in E$.
\end{lem}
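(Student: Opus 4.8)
\textbf{Proof plan for Lemma \ref{charcoll}.}
The strategy is to invoke the classification of pairs of extremal elements in Proposition \ref{2generators} together with the defining condition of $E_{-1}$ in Definition \ref{namesofpairs}, reducing the claim to a case analysis on which of the five alternatives $x$ and $y$ fall into. The forward implication is immediate: if $(x,y)\in E_{-1}$, then by definition $\MF x+\MF y\subseteq E\cup\{0\}$, so \emph{any} nonzero combination $\lambda x+\mu y$ with $\lambda,\mu\in\MF^*$ lies in $E$, and such combinations are nonzero since $x,y$ are linearly independent. So the content is the reverse implication.

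For the converse, suppose there exist $\lambda,\mu\in\MF^*$ with $e:=\lambda x+\mu y\in E$. First I would observe that this forces $[x,y]=0$: indeed, $0=[e,[e,e]]$ is automatic, but more to the point one computes $[e,[e,x]]\in\MF e$ from extremality of $e$, and expanding $[e,x]=\mu[y,x]$ and $[e,[e,x]]=\lambda\mu[x,[y,x]]+\mu^2[y,[y,x]]$, while extremality of $x$ and $y$ gives $[x,[y,x]]=-[x,[x,y]]=-2g_x(y)x$ and $[y,[y,x]]=2g_y(x)y$; so $[e,[e,x]]=-2\lambda\mu g_x(y)x+2\mu^2 g_y(x)y$ must be a scalar multiple of $e=\lambda x+\mu y$, which by linear independence of $x,y$ forces $-2\lambda\mu g_x(y)/\lambda = 2\mu^2 g_y(x)/\mu$, i.e. (using $g_x(y)=g_y(x)$) that $g_x(y)(\lambda+\mu\cdot\text{(something)})$... rather, comparing coefficients yields $g_x(y)=0$ after using $\lambda,\mu\neq 0$. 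Hence by the last sentence of Proposition \ref{2generators} the pair $\langle x,y\rangle$ is not $\mathMSL(\MF)$, and in particular $g(x,y)=0$; combined with the structure theorem this rules out case (e). The remaining possibilities of Proposition \ref{2generators} are (a)--(d). Case (a) is excluded by linear independence. In cases (c) and (d) one has $[x,y]=0$ but $\lambda x+\mu y\in E$ only when $\lambda=0$ or $\mu=0$, contradicting $e\in E$ with $\lambda,\mu\neq 0$; for case (d) one also checks directly that $\lambda x+\mu y$ with both coefficients nonzero cannot be extremal. This leaves only case (b), which says precisely $[x,y]=0$ and $\MF x+\MF y\subseteq E\cup\{0\}$ — together with linear independence this is the definition of $(x,y)\in E_{-1}$.

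The main obstacle I anticipate is carrying out the coefficient comparison cleanly enough to conclude $g_x(y)=0$ and, more delicately, distinguishing cases (b), (c) and (d) of Proposition \ref{2generators}, since all three have $[x,y]=0$ and the distinction is exactly about which linear combinations are extremal. The cleanest route is probably to note that the hypothesis "$\exists\,\lambda,\mu\in\MF^*$ with $\lambda x+\mu y\in E$" is visibly incompatible with the defining clauses of (c) and (d), so once (a) and (e) are eliminated only (b) survives, and (b) is the conclusion. Thus the real work is the short computation ruling out (e), i.e. showing the existence of such an extremal combination is incompatible with $\langle x,y\rangle\cong\MSL(\MF)$ — which can alternatively be seen geometrically via Proposition \ref{geometry}(e), since in $\MSL(\MF)$ the only extremal points are $\langle x\rangle$, $\langle y\rangle$ (the two distance-$3$ points) and there is no further extremal point on the line $\MF x+\MF y$ unless that line consists of nilpotent elements, which it does not when $g_x(y)\neq 0$.
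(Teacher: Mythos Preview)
The paper does not supply its own proof of this lemma; it is quoted directly from \cite[Lemma~27]{CI07}. So there is no in-paper argument to compare against, and your task amounts to reconstructing the CI07 argument from the tools the present paper records.

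Your forward implication is fine, and running through the five alternatives of Proposition~\ref{2generators} is the natural plan. Cases (a) and (c) are handled correctly, and your treatment of (e) --- either by the coefficient comparison in $[e,[e,x]]$, or by noting that an extremal element of $\langle x,y\rangle\cong\mathfrak{sl}_2(\mathbb{F})$ must be ad-nilpotent and the nilpotent cone meets the plane $\mathbb{F}x+\mathbb{F}y$ only in $\mathbb{F}x\cup\mathbb{F}y$ --- is sound in characteristic $\neq 2$ (in characteristic~$2$ the comparison collapses to $0=0$ and one must instead invoke the Premet identities).

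The genuine gap is case (d). You write that ``in cases (c) and (d) one has $[x,y]=0$'', but this is simply false: in (d) one has $z:=[x,y]\in E$, so $[x,y]\neq 0$. Proposition~\ref{2generators}(d) says nothing about whether $\lambda x+\mu y$ with $\lambda,\mu\neq 0$ lies in $E$, so there is no immediate contradiction with the hypothesis, and your ``one checks directly'' hides the real work. In fact no contradiction can be derived inside the subalgebra $\langle x,y\rangle$: it is the $3$-dimensional Heisenberg algebra, in which \emph{every} nonzero element is a sandwich and hence extremal, all Premet identities hold trivially, and $(x,y)\in E_1$ while $x+y\in E$. Thus the converse of the lemma is outright false for Lie algebras consisting of sandwiches. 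The statement in \cite{CI07} carries a standing hypothesis (purity of the extremal elements, equivalently absence of sandwiches) that the present paper has suppressed in its citation; to close case (d) you must invoke that hypothesis and then use an element of $\mathfrak{g}$ outside $\langle x,y\rangle$ --- for instance some $w\in E$ with $g(z,w)\neq 0$ --- to show that $\lambda x+\mu y$ violates extremality.
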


We will use the following fundamental result of Cohen and Ivanyos (see \cite{CI06} and \cite{CI07}) in the next section.

\begin{satz}\cite[Theorem 28]{CI07} \label{CIthm1} 
Suppose that the extremal geometry $\Gamma$ of a Lie algebra $\MG$, generated by its set of  extremal elements and equipped with a nondegenerate extremal 
form $g$, has finite rank. Then a connected compontent of $\Gamma(\MG)$ 
is isomorphic to a root shadow space of type $\mathrm{A}_{n,\{1,n\}}$ $(n\geq 2)$, $ \mathrm{BC}_{n,2}$ $(n\geq 3)$, $ \mathrm{D}_{n,2}$ $(n\geq 4)$, $\mathrm{E}_{6,2}$, $\mathrm{E}_{7,1}$, $\mathrm{E}_{8,8}$, $\mathrm{F}_{4,1}$ or $\mathrm{G}_{2,2}$ or consists of a single point.

Furthermore $\MG$ is a direct sum of the Lie algebras generated by the connected components of  $\Gamma$.
Moreover, in each such Lie subalgebra the extremal points form a single orbit under the automorphism group of the Lie subalgebra.  
\end{satz}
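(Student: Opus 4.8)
The plan is to reduce the first assertion to the Cohen--Ivanyos classification of \emph{root filtration spaces} of finite rank, and then to obtain the decomposition and the orbit statement from the geometry of root shadow spaces together with the inner automorphisms $\mathrm{exp}(x,\lambda)$. As a preliminary remark, nondegeneracy of $g$ removes all sandwiches: a sandwich $x$ has $g_x\equiv 0$, hence lies in the ideal $\rad(g)=0$; so every extremal element is pure and, by Proposition \ref{exp}, each $\mathrm{exp}(x,\lambda)$ with $x\in E$, $\lambda\in\MF$, is an automorphism of $\MG$.

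First I would verify that the partition $E=E_{-2}\cup\cdots\cup E_2$ of Definition \ref{namesofpairs} turns $\Gamma(\MG)$ into a root filtration space in the sense of \cite{CI06}. This uses Propositions \ref{2generators}, \ref{geometry} and \ref{charcoll}, whose proofs in turn rest on the Premet identities: one checks that $\Gamma$ is a partial linear space, that $E_{\le-1}(x)$ is exactly the set of points equal or collinear to $\langle x\rangle$ while $E_{\le 0}(x)$ is a geometric subspace, that for $y\in E_1(x)$ the point $\langle[x,y]\rangle$ is the unique common neighbour of $\langle x\rangle$ and $\langle y\rangle$, that for $y\in E_2(x)$ the subalgebra $\langle x,y\rangle\cong\MSL$ yields the required ``symplecton'' configuration, and that the various compatibility (``triangle'') axioms between the relations $E_i$ hold; finiteness of the rank of $\Gamma$ supplies the finite-rank hypothesis. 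Then I would invoke the classification of connected root filtration spaces of finite rank (Cohen--Ivanyos \cite{CI06}, which relies on the local recognition theory for parapolar spaces of Kasikova--Shult \cite{KS01}): such a space is a single point, a single line, or the root shadow space of an irreducible spherical building of type $\mathrm{A}_{n,\{1,n\}}$ ($n\ge 2$), $\mathrm{BC}_{n,2}$ ($n\ge 3$), $\mathrm{D}_{n,2}$ ($n\ge 4$), $\mathrm{E}_{6,2}$, $\mathrm{E}_{7,1}$, $\mathrm{E}_{8,8}$, $\mathrm{F}_{4,1}$ or $\mathrm{G}_{2,2}$. Applying this to each connected component of $\Gamma(\MG)$ gives the first assertion; under nondegeneracy of $g$ the single-point and single-line options do not arise, since an extremal element spanning such an isolated point or line would commute with all of $E$, hence be central, hence lie in $\rad(g)=0$.

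For the decomposition, let $\Gamma_1,\Gamma_2,\dots$ be the connected components of $\Gamma(\MG)$ and let $\MG_i$ be the subalgebra generated by the extremal elements spanning the points of $\Gamma_i$. If $x$ and $y$ span points in distinct components, they are non-proportional and non-collinear, and by Proposition \ref{geometry} they form neither an $\E_1$- nor an $\E_2$-pair (these would put $\langle x\rangle,\langle y\rangle$ at distance $2$, resp.\ $3$, in $\Gamma$); hence $(x,y)\in\E_0$, so $[x,y]=0$, and moreover $g(x,y)=0$, as $g(x,y)\ne 0$ would put $(x,y)$ in $\E_2$. A routine Jacobi induction upgrades this to $[\MG_i,\MG_j]=0$ for $i\ne j$, so each $\MG_i$ is an ideal; since $\MG$ is spanned by $E$ (Proposition \ref{span}) and each extremal point lies in a unique $\Gamma_i$, we get $\MG=\sum_i\MG_i$. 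If $z\in\MG_i\cap\sum_{j\ne i}\MG_j$, then $[z,\MG_k]=0$ for every $k$, so $z$ is central; associativity of $g$ gives $g(z,[\MG,\MG])=g([z,\MG],\MG)=0$, and $\MG=[\MG,\MG]$ since each pure extremal element lies in an $\MSL$-subalgebra (each component has diameter $3$, so $\E_2(x)\ne\emptyset$, and Proposition \ref{2generators} applies). Hence $z\in\rad(g)=0$ and the sum $\MG=\bigoplus_i\MG_i$ is direct. Finally, each $\Gamma_i$ is a root shadow space of an irreducible spherical building, so the graph on $(\E(\MG_i),\E_2)$ --- its ``distance-$3$ graph'' --- is connected; running the argument of Proposition \ref{orbit} inside $\MG_i$ with the automorphisms $\mathrm{exp}(x,\lambda)$, $x$ spanning a point of $\Gamma_i$, then shows $\E(\MG_i)$ is a single $\mathrm{Aut}(\MG_i)$-orbit.

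The main obstacle is the combination of the root-filtration-space axiom check and the classification invoked in the second paragraph: translating the Premet identities into all the geometric axioms is lengthy and case-ridden, and the classification of root filtration spaces --- including pinning down the precise buildings via the Kasikova--Shult local recognition results --- is itself a major theorem. The decomposition and orbit arguments of the third paragraph are, by comparison, routine once the geometry has been identified.
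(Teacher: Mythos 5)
Your overall route differs from the paper's: the paper obtains the classification of the components and the direct-sum decomposition wholesale by citing Theorem 28 of \cite{CI07}, and only proves the last sentence, via the connectedness of the graph $(\E,\E_2)$ on each component (Lemma 5 of \cite{CI07}) together with Proposition \ref{orbit}; your treatment of the orbit statement essentially coincides with this, except that you assert rather than prove (or cite) the connectedness of the distance-$3$ graph. The genuine problem lies in your second paragraph: the claim that nondegeneracy of $g$ excludes single-point (and single-line) components is false. The simple symplectic Lie algebras, and already $\mathfrak{sl}_2(\MF)$ in characteristic $0$, are generated by their extremal elements, have nondegenerate extremal form and finite rank, yet their extremal geometries contain no lines at all, so \emph{every} connected component of $\Gamma$ is a single point --- this is exactly why the theorem retains that alternative. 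Your argument for the exclusion misapplies Proposition \ref{geometry}: the equivalences ``$\E_1\Leftrightarrow$ distance $2$'' and ``$\E_2\Leftrightarrow$ distance $3$'' are established in \cite{CI07} only in the presence of lines, and they visibly fail in $\mathfrak{sl}_2$, where $\E_2$-pairs abound while the collinearity graph has no edges. An isolated point of $\Gamma$ therefore need not commute with all of $E$ and need not lie in $\rad(g)$.

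The same misapplication undermines your decomposition argument: for $x,y$ spanning points in distinct components you deduce $(x,y)\in \E_0$, hence $[x,y]=0$ and $g(x,y)=0$, again from the distance characterization --- but when a component is a single point this deduction is unavailable, and indeed false in the $\mathfrak{sl}_2$ situation, where distinct components carry $\E_2$-pairs. So your third paragraph establishes $[\MG_i,\MG_j]=0$ only after the (invalid) exclusion of lineless components, i.e.\ precisely not in the degenerate case the statement explicitly allows; the subsequent steps ($\MG=[\MG,\MG]$ via ``each component has diameter $3$'') inherit the same dependence. To repair this you would either have to restrict the hand-made decomposition to the case where every component contains a line and handle the lineless components separately as in \cite{CI07} (where the cross-component commutation is obtained with more care, not from the distance dictionary), or simply do what the paper does and quote Theorem 28 of \cite{CI07} for the first two assertions, reserving your own argument for the orbit statement --- in which case you should also cite Lemma 5 of \cite{CI07} (or prove it for each listed root shadow space) for the connectedness of $(\E,\E_2)$ on a component before invoking Proposition \ref{orbit}.
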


\begin{proof}
The result directly follows from Theorem 28 of \cite{CI07}, except for the last sentence. 
But this follows from the fact that the subgraph $(\E,\E_2)$ induced on the connected components of $\Gamma$ is connected, cf. Lemma 5 of \cite{CI07}, and Proposition \ref{orbit}.
\end{proof}

Note that the labeling of the Coxeter diagrams follows \cite{Bou68}.

\section{The embedding}
\label{sec:embedding}

We fix the properties that we assume for Lie algebras in this section. 
If not mentioned otherwise, any Lie algebra in the remainder of this section is supposed to fulfill the conditions of Setting \ref{notation}.

\begin{setting}\label{notation}
By $\MG$ we denote a  Lie algebra generated by its
set $E$ of extremal elements and with nondegenerate extremal form $g$. 
By $\Gamma=(\ME, \mathcal{F})$, we denote the extremal geometry of $\MG$. We assume $\Gamma$ to be  connected and not a single point.
%and of finite singular rank, 
So, in particular we assume that $\ME_{-1}\neq \emptyset$. 
\end{setting}

Considering $\MG$ as a vector space, 
and its  projective geometry $\mathbb{P}(\MG)$, the {\em natural} projective embedding of the extremal geometry $\Gamma=(\ME, \mathcal{F})$ into $\mathbb{P}(\MG)$ is defined to be the injection
\[ \phi: \ME\rightarrow \mathbb{P}(\MG),\mathrm{\ with\ } \phi(x)=x\ \mathrm{\ for\ }x\in \ME.\]
By definition of lines in $\mathcal{F}$, 
the image under $\phi$ of a line $l\in \mathcal{F}$ is the full set $\phi(l)$ of points of some projective line in $\mathbb{P}(\MG)$. Moreover,
as the extremal points in $\ME$ linearly span $\MG$ (see ~\ref{span}), the set $\phi(\mathcal{E})$ spans $\mathbb{P}(\MG)$.
So, $\phi$ is indeed a projective embedding.

\begin{lem}
The embedding $\phi$ is polarized.
\end{lem}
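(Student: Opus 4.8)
To show that $\phi$ is polarized, I must show that for every extremal point $x \in \ME$, the set $\phi(x^\perp)$ spans a proper subspace of $\MG$; equivalently, since $\Gamma$ is a root shadow space of diameter $\le 3$, that the subspace $\langle y : y \in \ME,\ (x,y) \in \ME_{-2} \cup \ME_{-1} \cup \ME_0 \cup \ME_1 \rangle$ of $\MG$ does not equal all of $\MG$. The natural candidate for the linear functional vanishing on this subspace is $g_x = g(x, -)$, the extremal form evaluated at $x$. So the strategy is: first, show that $g_x$ vanishes on every extremal element $y$ with $y \in E_{\le 1}(x)$; second, conclude that $\langle \phi(x^\perp)\rangle \subseteq \ker g_x$; and third, check that $\ker g_x$ is a proper subspace, i.e. that $g_x \ne 0$.

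**Step 1: $g_x$ vanishes on $E_{\le 1}(x)$.** This is a direct case analysis using Definition~\ref{namesofpairs} (or Proposition~\ref{geometry}). If $(x,y) \in E_{-2}$ then $y \in \MF x$, and $g(x,x) = g_x(x) = 0$ because $[x,[x,x]] = 0$ forces $g_x(x) = 0$ (alternatively, $x \in E_{-2}(x)$ and a point is trivially at distance $0$ from itself, with $g_x(x)=0$ being standard for extremal elements). If $(x,y) \in E_{-1}$ or $E_0$ then $[x,y] = 0$, and by associativity of $g$ we get $g(x, y) \cdot (\text{nothing})$ — more directly, $2g_x(y) x = [x,[x,y]] = [x,0] = 0$, so $g_x(y) = 0$ since $x \ne 0$. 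If $(x,y) \in E_1$ then by definition $g(x,y) = 0$. Thus in every case with $(x,y) \in E_{\le 1}$ we have $g_x(y) = 0$, so by Proposition~\ref{span} (bilinearity of $g$) the whole subspace $\langle \phi(x^\perp) \rangle$ lies in $\ker g_x$.

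**Step 2: $g_x \ne 0$.** Since $\Gamma$ is connected and not a single point, $\ME_{-1} \ne \emptyset$, and in particular $x$ has a neighbour; but more importantly I need an extremal element $z$ with $g_x(z) \ne 0$. By Theorem~\ref{CIthm1}, applied to the connected geometry $\Gamma$, the extremal points form a single orbit under $\mathrm{Aut}(\MG)$, and the subgraph $(\ME, \ME_2)$ is connected; in particular $\ME_2(x) \ne \emptyset$ for every $x$, i.e. there exists an extremal element $z$ with $g(x,z) \ne 0$. (One could also argue more elementarily: if $g_x = 0$ then $x \in \rad(g)$, which is a proper ideal since $g$ is nondegenerate, contradicting that $\MG$ is spanned by extremal elements all of which would then be forced into $\rad(g)$ by the orbit argument — but invoking Theorem~\ref{CIthm1} is cleanest.) Hence $\ker g_x$ is a proper hyperplane of $\MG$, and $\langle \phi(x^\perp) \rangle \subseteq \ker g_x \subsetneq \MG$. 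Since $x$ was arbitrary, $\phi$ is polarized.

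**Main obstacle.** The only genuine content is Step~2, ensuring $g_x \ne 0$ for every extremal point $x$; everything else is bookkeeping with the definitions. The clean resolution is that the nondegeneracy of $g$ together with the single-orbit statement of Theorem~\ref{CIthm1} (the extremal points form one $\mathrm{Aut}(\MG)$-orbit, and $g$ is $\mathrm{Aut}(\MG)$-invariant since it is canonically defined) forces $g_x \ne 0$ for one $x$ hence for all. I expect the write-up to be just a few lines once these citations are marshalled.
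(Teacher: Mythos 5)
Your proof takes the same route as the paper: the paper's entire argument is that $\phi\big(\mathcal{E}_{\leq 1}(x)\big)$ lies in the hyperplane $\{y\in\MG \mid g(x,y)=0\}$, which is exactly your Step 1 together with the observation that this hyperplane is proper. The only place you deviate, Step 2, is both heavier than necessary and technically shaky: Theorem \ref{CIthm1} carries the hypothesis that the extremal geometry has finite rank, which Setting \ref{notation} does not give you, so you cannot invoke the single-orbit or $(\E,\E_2)$-connectedness statements here without an unstated extra assumption. No such detour is needed: Setting \ref{notation} assumes the extremal form $g$ is nondegenerate, i.e. $\rad(g)=\{u\in\MG\mid g_u\equiv 0\}=0$, so for any extremal (hence nonzero) $x$ the functional $g_x$ is nonzero and $\ker g_x$ is a proper hyperplane containing $\langle\phi(x^\perp)\rangle$ by your Step 1. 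Your parenthetical alternative is the right idea but overcomplicated --- no ideals, orbits, or invariance of $g$ are required; nondegeneracy alone finishes it. With Step 2 replaced by that one line, your argument coincides with the paper's proof.
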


\begin{proof}
For each  $x\in \mathcal{E}$ we find $\phi\big(\mathcal{E}_{\leq 1}(x)\big)$
to be contained in the hyperplane $\{y\in \MG\mid g(x,y)=0\}$. See Proposition \ref{geometry}.
\end{proof}

\begin{satz}\label{isoembeddings}
Suppose $\MG_1$ and $\MG_2$ are two Lie algebras as in Setting \ref{notation}, each of them 
generated by its set of extremal elements and equipped with a nondegenerate extremal form. 
Assume their corresponding extremal geometries $\Gamma_1$ and $\Gamma_2$ are isomorphic to 
each other and admit an absolutely universal embedding.
Then their natural embeddings are isomorphic.
\end{satz}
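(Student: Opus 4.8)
The strategy is to reduce the claim to the uniqueness machinery for polarized embeddings developed in Section~\ref{sec:rss}, in particular Theorem~\ref{polarizedthm}. We already know from the lemma above that each natural embedding $\phi_i\colon\Gamma_i\to\MP(\MG_i)$ is polarized. Since $\Gamma_1\cong\Gamma_2$ as point-line geometries, we may transport everything to a single abstract geometry $\Gamma$; the two Lie algebras then supply two polarized embeddings $\phi_1,\phi_2$ of this same $\Gamma$. By hypothesis $\Gamma$ admits an absolutely universal embedding $\epsilon$, so Theorem~\ref{polarizedthm} applies and yields $\phi_1/R_{\phi_1}\cong\epsilon/R_\epsilon\cong\phi_2/R_{\phi_2}$. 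Thus it suffices to prove that the radical $R_{\phi_i}$ of each natural embedding is trivial; then $\phi_1\cong\epsilon/R_\epsilon\cong\phi_2$ and we are done.

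\textbf{Triviality of the radical.} By definition $R_{\phi_i}=\bigcap_{x\in\ME_i}\langle\phi_i(x^\perp)\rangle$, and by Proposition~\ref{geometry}, for each extremal point $x$ the set $\phi_i(x^\perp)$ spans exactly the hyperplane $H_x=\{y\in\MG_i\mid g(x,y)=0\}$ — here one uses that $x^\perp=\ME_{\le1}(x)$, that this set spans $H_x$ (a point-line connectivity / diameter argument inside the root shadow space, using Proposition~\ref{rss}(b)–(c)), and that $\ME_2(x)$ lies outside $H_x$. Hence $R_{\phi_i}=\bigcap_{x\in\ME_i}H_x=\{y\in\MG_i\mid g(x,y)=0\ \text{for all }x\in\ME_i\}$. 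Since $\MG_i$ is linearly spanned by $\ME_i$ (Proposition~\ref{span}), this is precisely $\{y\mid g(y,z)=0\ \text{for all }z\in\MG_i\}=\rad(g)$, which is trivial by the nondegeneracy assumption in Setting~\ref{notation}. Therefore $R_{\phi_i}=0$.

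\textbf{Assembling the isomorphism.} With both radicals trivial, Theorem~\ref{polarizedthm} (or directly Theorem~\ref{polarizedembedding} together with the Proposition preceding it) gives $\phi_i\cong\epsilon/R_\epsilon$ for $i=1,2$; composing, $\phi_1\cong\phi_2$. Finally one should check that the isomorphism $\Gamma_1\cong\Gamma_2$ chosen at the outset is indeed the one realized by this embedding isomorphism, i.e.\ that the projective isomorphism $\MP(\MG_1)\to\MP(\MG_2)$ carries $\phi_1(x)$ to $\phi_2(x)$ for every point $x$; this is automatic from the way Theorem~\ref{polarizedthm} is proved, since all identifications there are over the fixed geometry $\Gamma$.

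\textbf{Main obstacle.} The one genuinely geometric point, rather than a formal application of Section~\ref{sec:rss}, is establishing that $\phi_i(x^\perp)$ spans the full hyperplane $H_x$ rather than a proper subspace of it — equivalently, that the natural embedding is ``polarized with the largest possible hyperplanes.'' This requires knowing that $x^\perp=\ME_{\le1}(x)$ spans $H_x$; it follows from the structure of $\Gamma$ as a root shadow space of a spherical building (connectivity of the local structure, the description of $x^\perp$ in Proposition~\ref{rss}(c), and the relation between the $\ME_i(x)$ and the extremal form via Proposition~\ref{geometry}), but it is the step where the geometric hypotheses are actually used.
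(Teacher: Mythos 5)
Your proposal is correct and follows essentially the same route as the paper: reduce to Theorem~\ref{polarizedthm} via the absolutely universal embedding and show that each natural embedding has trivial radical because that radical sits inside the radical of the extremal form. Note, however, that the step you single out as the ``main obstacle'' --- proving that $\langle\phi_i(x^\perp)\rangle$ is \emph{exactly} the hyperplane $H_x=\{y\mid g(x,y)=0\}$ --- is not needed: the containment $\langle\phi_i(\mathcal{E}_{\leq 1}(x))\rangle\subseteq H_x$, immediate from Proposition~\ref{geometry}, already yields $R_{\phi_i}\subseteq\bigcap_x H_x=\rad(g_i)=0$ (using that $E$ spans $\MG_i$ by Proposition~\ref{span}), which is precisely how the paper argues, so your unproven spanning claim is superfluous rather than a gap.
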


\begin{proof}
We can apply the results of Section ~\ref{sec:rss} and find
by \ref{polarizedthm} that the natural embeddings
$\phi_1$ and $\phi_2$ are isomorphic, provided their radicals are trivial.

Since, for $i=1,2$, the radical $R_i$ of  embedding $\phi_i$ is the intersection
of all the subspaces $\langle \mathcal{E}_{\leq 1}(x)\rangle$ where $x$ runs through 
the set of extremal points of $\MG_i$, we find these radicals to be contained
in the radical of the extremal form $g_i$ of $\MG_i$.

As the radicals of the forms $g_1$ and $g_2$ are trivial by assumptions, 
the radicals of the embeddings are also trivial.
\end{proof}

The above has the following consequences.

\begin{cor}\label{cor524}
Let $\MG_1$and $\MG_2$ be Lie algebras as in \ref{notation}.
Assume the corresponding extremal geometries $\Gamma_1$ and $\Gamma_2$
are isomorphic to each other and to a connected root shadow space of type 
$\mathrm{BC}_{n,2}$ $(n\geq 3)$, $ \mathrm{D}_{n,2}$ $(n\geq 4)$, $\mathrm{E}_{6,2}$, $\mathrm{E}_{7,1}$, $\mathrm{E}_{8,8}$, or $\mathrm{F}_{4,1}$. 
Then their natural embeddings are isomorphic.
\end{cor}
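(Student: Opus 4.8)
The plan is to reduce Corollary \ref{cor524} to Theorem \ref{isoembeddings} by verifying that its two hypotheses hold in the present situation. Concretely, I need to check (i) that the common isomorphism type $\Gamma_1 \cong \Gamma_2$ of these extremal geometries admits an absolutely universal embedding, and (ii) that this type actually occurs as an extremal geometry carrying a polarized embedding, so that the machinery of Section \ref{sec:rss} applies. Both are close at hand.

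First I would invoke Theorem \ref{KasikovaShult}: for each of the listed types $\mathrm{BC}_{n,2}$ $(n\geq 3)$, $\mathrm{D}_{n,2}$ $(n\geq 4)$, $\mathrm{E}_{6,2}$, $\mathrm{E}_{7,1}$, $\mathrm{E}_{8,8}$ and $\mathrm{F}_{4,1}$, an \emph{embeddable} root shadow space of that type admits an absolutely universal embedding. So I must first observe that $\Gamma_i$ is embeddable: but this is immediate, since by Setting \ref{notation} the algebra $\MG_i$ is spanned by its extremal elements, and the natural embedding $\phi_i:\Gamma_i \to \MP(\MG_i)$ constructed just before Theorem \ref{isoembeddings} is a genuine projective embedding (it is injective, its image spans, and lines go to full projective lines by definition of $\mathcal F$). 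Hence $\Gamma_1 \cong \Gamma_2$ is an embeddable root shadow space of one of the stated types, and Theorem \ref{KasikovaShult} gives it an absolutely universal embedding.

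With that in hand, both hypotheses of Theorem \ref{isoembeddings} are met: $\MG_1,\MG_2$ are as in Setting \ref{notation} with nondegenerate extremal form, $\Gamma_1\cong\Gamma_2$, and this common geometry admits an absolutely universal embedding. Theorem \ref{isoembeddings} then yields directly that the natural embeddings $\phi_1$ and $\phi_2$ are isomorphic, which is exactly the assertion of the corollary. (Alternatively one could route through Corollary \ref{cor:KS}: since $\phi_i$ is a polarized embedding with trivial radical — the radical being contained in $\rad(g_i)=0$, as in the proof of Theorem \ref{isoembeddings} — the corollary gives a \emph{unique} polarized embedding with trivial radical for the common isomorphism type, and both $\phi_1,\phi_2$ must coincide with it.)

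I do not expect a genuine obstacle here; the content of the corollary is entirely in the cited results, and the only thing to be careful about is the bookkeeping of ``embeddable'': one must note explicitly that the natural embedding exists before quoting Theorem \ref{KasikovaShult}, and that the type $\mathrm{A}_{n,\{1,n\}}$ (and $\mathrm{G}_{2,2}$) are deliberately excluded from the list precisely because the Kasikova--Shult existence result is not available (or not needed) there. The mild subtlety, if any, is checking that when $\Gamma_i$ is \emph{connected} of one of these types the rank/diameter data from Proposition \ref{rss} matches the hypotheses of Theorem \ref{KasikovaShult} verbatim — but the ranks $n\geq 3$ (resp.\ $n\geq 4$ for $\mathrm D$) are exactly the constraints already built into the statement.
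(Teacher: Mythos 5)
Your proposal is correct and follows essentially the same route as the paper: cite the Kasikova--Shult result (the paper does so via Corollary \ref{cor:KS}, you via Theorem \ref{KasikovaShult}, noting embeddability via the natural embedding) to obtain an absolutely universal embedding, and then apply Theorem \ref{isoembeddings}. Your extra remark that embeddability is witnessed by the natural embedding $\phi_i$ is a sensible bit of bookkeeping the paper leaves implicit.
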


\begin{proof}
By Corollary \ref{cor:KS} we find that $\Gamma_i$, with $i=1,2$, 
admits an absolutely universal embedding. So Theorem \ref{isoembeddings} applies.
\end{proof}

\begin{remark}
For root shadow spaces of type $\mathrm{A}_{n,\{1,n\}}$ and of type $\mathrm{G}_{2,2}$
it is not known whether they admit an absolutely universal embedding.

The results of V\"olklein \cite{Voe89} imply that the natural embeddings of the
extremal geometries of type $\mathrm{A}_{n,\{1,n\}}$ and of type $\mathrm{G}_{2,2}$
of the  Chevalley Lie algebras of type $\mathrm{A}_n$ and $\mathrm{G}_2$ 
do have a universal cover (see Section \ref{sec:chevalley}).

Blok and Pasini  \cite{BP03} obtain some partial results on embeddings
of the geometries of type $\mathrm{A}_{n,\{1,n\}}$ under some extra conditions on the underlying field. 
Van Maldeghem and Thas \cite{TVM04} show that the natural embedding of finite dual Cayley hexagons in the Chevalley Lie algebra is, 
up to isomorphism, the unique embedding of the hexagon in dimension $\geq 14$.
\end{remark}

In the next section we will prove that given an embedding of the extremal geometry of a Lie algebra there is, up to a scalar multiple, 
at most one Lie bracket corresponding to it.
This implies that the Lie structures $\MG_1$ and $\MG_2$, in the cases considered in \ref{cor524} are isomorphic.

\section{Uniqueness of the Lie product}
\label{sec:unique}

Let $\MG$ be a Lie algebra generated by its set of extremal elements $E$
with respect to a nondegenerate extremal form $g$.
As before let $\Gamma=(\ME,\MCF)$ be the extremal geometry of $\MG$. 
In the previous section we have seen  that the natural embedding of 
$\Gamma$ into the projective space $\mathbb{P}(\MG)$
is uniquely determined (up to isomorphism), 
if $\Gamma$ admits an absolutely  universal embedding. 
Our goal is to prove that not only the embedding of the extremal geometry
is uniquely determined, but that also the Lie product is determined up to scalar
multiples.

In this section we assume the following.

\begin{setting}\label{setting2}
Let $\MG$ be a Lie algebra as in \ref{notation}, with $\Gamma$ naturally embedded into the projective space $\MP(\MG)$. \\
Let $[\cdot,\cdot]$ denote the Lie product on $\MG$.
We consider  a second Lie product $[\cdot,\cdot]_1$ defining a Lie algebra $\MG_1$ 
on the vector space underlying $\MG$ with  extremal form $g_1$ and also $\Gamma$ as extremal geometry.
\end{setting}

We want to show that $[\cdot,\cdot]_1=\lambda[\cdot,\cdot]$ for some fixed $\lambda\in \MF^*$.
Notice that the relations $\ME_i$ with $-2\leq i\leq 2$ are determined by $\Gamma$ (see \ref{geometry}). 
So elements $x,y\in \ME$ are in relation $\ME_i$ in $\MG$ if and only if they are in relation $\ME_i$ in $\MG_1$.

\begin{lem}\label{scalarforE1}
Let $(x,y)\in E_{\leq 1}$, then there is a $\lambda\in \MF^*$
such that $[x,y]_1=\lambda [x,y]$.
\end{lem}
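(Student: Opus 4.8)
The statement concerns pairs $(x,y)\in E_{\le 1}$, i.e.\ $(x,y)$ lies in $E_{-2}$, $E_{-1}$, $E_0$, or $E_1$. I would split into these cases. If $(x,y)\in E_{-2}$ then $\MF x=\MF y$, so $[x,y]=0=[x,y]_1$ and any $\lambda\in\MF^*$ works. If $(x,y)\in E_{-1}\cup E_0$, then by definition (Definition~\ref{namesofpairs}, and using that the relations $\E_i$ are geometric hence shared by $\MG$ and $\MG_1$, cf.\ Proposition~\ref{geometry}) we have $[x,y]=0$ \emph{and} $[x,y]_1=0$, so again both sides vanish and there is nothing to prove beyond choosing $\lambda=1$. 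So the only substantive case is $(x,y)\in E_1$.

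For $(x,y)\in E_1$: by Proposition~\ref{geometry}(d), $z:=\langle[x,y]\rangle$ is the unique common neighbour of $\langle x\rangle$ and $\langle y\rangle$ in $\Gamma$, and this geometric fact holds equally for $\MG_1$, so $\langle [x,y]_1\rangle$ is \emph{the same} extremal point $z$. Hence $[x,y]$ and $[x,y]_1$ are both nonzero vectors in the $1$-space $z$, which immediately gives a $\lambda\in\MF^*$ with $[x,y]_1=\lambda[x,y]$. The key inputs are thus: (i) the relations $\E_i$ are determined by the isomorphism type of $\Gamma$ together with its embedding into $\MP(\MG)$, which is already recorded just before the lemma; and (ii) Proposition~\ref{geometry}, specifically that an $E_1$-pair has a \emph{unique} common neighbour and that this neighbour is precisely the extremal point spanned by the Lie bracket of the two elements.

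The main (and really only) obstacle is making sure that the vector $[x,y]_1$ is not merely an extremal element in the same $1$-space $z$ "up to the geometry" but is genuinely the bracket computed from the second product — here one just invokes Proposition~\ref{geometry}(d) applied to $\MG_1$ with its product $[\cdot,\cdot]_1$ and extremal form $g_1$, which is legitimate since $\MG_1$ is by hypothesis (Setting~\ref{setting2}) a Lie algebra of the same type with the same extremal geometry $\Gamma$. Once that identification is in place, the conclusion is immediate. I would also remark that this $\lambda$ depends a priori on the pair $(x,y)$; proving that a single global $\lambda$ works for all such pairs is presumably the content of the subsequent lemmas, not of this one, so I would not attempt it here.
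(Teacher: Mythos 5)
Your proposal is correct and follows essentially the same route as the paper: for $(x,y)\in E_{\leq 0}$ both brackets vanish, and for $(x,y)\in E_{1}$ Proposition \ref{geometry}(d), applied to both products, shows that $[x,y]$ and $[x,y]_1$ span the same extremal point (the unique common neighbour of $\langle x\rangle$ and $\langle y\rangle$), whence they are proportional. Your closing remark is also accurate: the scalar here is allowed to depend on the pair, and globalizing it is the business of the later lemmas.
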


\begin{proof}
If  $(x,y)\in E_{\leq 0}$, then $[x,y]_1=0=[x,y]$.

If $(x,y)\in E_{1}$, then, by Proposition \ref{geometry},  both $[x,y]$ and $[x,y]_1$ span the unique point
in $\ME$ collinear to both $\langle x\rangle$ and  $\langle y\rangle$.
So indeed, there is a $\lambda\in \MF^*$ with $[x,y]_1=\lambda [x,y]$.
\end{proof}

Now we concentrate on the subalgebra of $\MG$ generated by 
a pair of points in $\mathcal{E}_2$. 
Such a subalgebra is isomorphic to $\MSL(\MF)$.

\begin{lem}\label{sl2invariant}
Let $(x,y)\in E_{2}$ be a generating a subalgebra $\mathfrak{h}$ of $\MG$. 
Then  there is a $\lambda\in \MF^*$ such that for all
$v,w\in \mathfrak{h}$ we have $[v,w]_1=\lambda[v,w]$.
\end{lem}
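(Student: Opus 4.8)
The plan is to exploit the fact that a subalgebra $\mathfrak{h}$ generated by a pair $(x,y)\in E_2$ is isomorphic to $\mathfrak{sl}_2(\MF)$, and that, by Proposition~\ref{2generators}, the same holds for the second product $[\cdot,\cdot]_1$: since the relations $\ME_i$ are determined by $\Gamma$ (see Proposition~\ref{geometry}), the pair $(x,y)$ still lies in $E_2$ for $\MG_1$, so $\langle x,y\rangle_1 \cong \mathfrak{sl}_2(\MF)$ as well. In particular the underlying vector space $\mathfrak{h}$ is $3$-dimensional and is spanned by $x$, $y$ and $[x,y]$, and also by $x$, $y$ and $[x,y]_1$; hence $[x,y]_1 = \lambda_0 x + \mu_0 y + \nu_0 [x,y]$ for scalars, and a first step is to pin down these coefficients.

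First I would normalise. After rescaling $y$ we may assume $g_x(y)=g_y(x)=1$, so that $\{x, h:=[x,y], y\}$ is a standard $\mathfrak{sl}_2$-triple in $\MG$ with $[x,y]=h$, $[h,x]=2x$, $[h,y]=-2y$ (up to the usual sign/scalar conventions coming from the extremal identities). Now I would determine $[x,y]_1$. Since $\langle x\rangle$ and $\langle y\rangle$ are at distance $3$ in $\Gamma$, they have no common neighbour, and the extremal points of $\mathfrak{h}$ other than $\langle x\rangle,\langle y\rangle$ form the "conic" of singular points of $\mathfrak{sl}_2$; applying $\mathrm{exp}(x,1)$ one sees that $\mathrm{exp}(x,1)y = y + h + x$ is extremal and collinear with $\langle y\rangle$ (relation $E_{-1}$ via $x$), and similarly for the $\MG_1$-picture. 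The key point is that the set of extremal points inside $\mathfrak{h}$, together with its collinearity structure, is the same whether computed in $\MG$ or $\MG_1$ — it is just the geometry $\Gamma$ restricted to the projective plane $\MP(\mathfrak{h})$. From this one reads off that $[x,y]_1$ must span the same point-configuration data; more concretely, I would use Lemma~\ref{scalarforE1} applied to suitable pairs of extremal points in $\mathfrak{h}$ that are in relation $E_1$ (their bracket being the unique common neighbour, hence forced to agree up to scalar), and the associativity of $g$ and $g_1$, to show first that $[x,y]_1 \in \MF h$, say $[x,y]_1 = \lambda h$ with $\lambda\in\MF^*$, and then that the same $\lambda$ governs $[h,x]_1$ and $[h,y]_1$.

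With $[x,y]_1=\lambda h$ established, the remaining step is bootstrapping to all of $\mathfrak{h}$: write an arbitrary $v,w\in\mathfrak{h}$ in the basis $\{x,h,y\}$ and expand $[v,w]$ and $[v,w]_1$ bilinearly; the claim reduces to checking that each of the three structure brackets $[x,h]$, $[h,y]$, $[x,y]$ is scaled by the same $\lambda$. The pair $[x,y]$ gives $\lambda$ by definition; for $[x,h]$ and $[h,y]$ I would again use that $h$, $\mathrm{exp}(x,1)y$, $\mathrm{exp}(y,1)x$ etc. are extremal points of $\mathfrak{h}$ lying in known $E_i$-relations with $x$ and $y$, plus Lemmas~\ref{scalarforE1} and~\ref{charcoll}, so that $[x,h]_1$ and $[h,y]_1$ are determined as scalar multiples of $[x,h]$ and $[h,y]$ — and then the Jacobi identity (or the $\mathfrak{sl}_2$ relations: $[[x,y],x] = -2g_x(y)x$ on one side, and the corresponding $g_1$-identity on the other, forcing $g_1$ restricted to $\mathfrak{h}$ to be $\lambda^{-1}$ times $g$ after the normalisation, or some such consistency) forces these scalars to coincide with $\lambda$. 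The main obstacle I expect is precisely this last consistency check: showing that the a priori independent scalars attached to $[x,h]$, $[h,y]$, $[x,y]$ are equal. This is where one must genuinely use that both products satisfy the extremal/Premet identities (Definition on extremal elements, equation~(\ref{extr})) — the geometry alone only rigidifies things up to these three separate scalars, and it is the algebraic identities relating $g_x$, $[x,\cdot]$ that collapse them to one. Once that is done the lemma follows by the bilinear expansion.
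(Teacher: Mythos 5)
Your proposal has a genuine gap at its very first substantive step: you assert that $\mathfrak{h}$ is spanned by $x$, $y$ and $[x,y]_1$ as well as by $x$, $y$ and $[x,y]$, i.e.\ you assume $[x,y]_1\in\mathfrak{h}$ (equivalently, that the subalgebra generated by $x,y$ with respect to $[\cdot,\cdot]_1$ coincides with $\mathfrak{h}$ as a linear subspace). Nothing in your setup justifies this: the point $\langle [x,y]_1\rangle$ is \emph{not} extremal (for an $E_2$-pair the bracket is the Cartan element of the $\mathfrak{sl}_2$), so it is invisible in $\Gamma$, and a priori it could lie outside the plane $\langle x,y,[x,y]\rangle$. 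Establishing $\mathfrak{h}=\mathfrak{h}_1$ is precisely the first half of the paper's proof; it is done by a genuinely geometric configuration argument \emph{outside} $\mathbb{P}(\mathfrak{h})$: two lines of $\Gamma$ through $\langle x\rangle$, points $\langle x_i\rangle$ on them at distance $2$ from $\langle y\rangle$, the points $y_i=[y,x_i]$, and the action of $\mathrm{Exp}(\langle x\rangle)$ and $\langle\mathrm{Exp}(x),\mathrm{Exp}(y)\rangle$, which identify a third point $\langle z\rangle$ (a unique common neighbour of suitable $z_1,z_2$) purely in terms of $\Gamma$; since the same identification works for $[\cdot,\cdot]_1$, one gets $x,y,z$ spanning both subalgebras, hence $\mathfrak{h}=\mathfrak{h}_1$.

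A second step of your plan would also fail as stated: you propose to pin down the coefficients of $[x,y]_1$ using Lemma \ref{scalarforE1} on pairs of extremal points \emph{inside} $\mathfrak{h}$ that are in relation $E_1$, and you claim $\exp(x,1)y=y+h+x$ is collinear with $\langle y\rangle$. But any two distinct extremal points of $\mathfrak{h}\cong\mathfrak{sl}_2$ are in relation $E_2$ (for instance $g(y,\exp(x,1)y)=g(y,x)=1\neq 0$); there are no $E_{-1}$ or $E_1$ pairs inside $\mathfrak{h}$, so Lemma \ref{scalarforE1} gives you nothing there. The paper's mechanism is different: writing $[x,y]_1=\alpha x+\beta y+\gamma[x,y]$ (legitimate only after $\mathfrak{h}=\mathfrak{h}_1$ is known), it computes the $\mathrm{Exp}(\langle x\rangle)$-orbit of $y$ with respect to $[\cdot,\cdot]_1$; these images are extremal, hence lie on the conic $\{ax+by+c[x,y]\mid ab=c^2\}$ of $\mathfrak{h}$ computed in $\MG$, which yields a cubic identity in $\lambda$ holding for all $\lambda\in\MF$ and forces $\alpha=\beta=0$, $\gamma^2=g_1(x,y)$, with the fields of size $2$ and $3$ handled by hand. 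Your final bootstrapping step (propagating the scalar from $[x,y]$ to $[x,h]$ and $[h,y]$ via the $\mathfrak{sl}_2$/extremal identities and bilinearity) is sound and is essentially what the paper leaves implicit, but without the two missing ingredients above the argument does not go through.
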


\begin{proof}
Without loss of generality suppose that $g(x,y)=1$.
Inside both $\MG$ and   $\MG_1$ the elements $x$ and $y$ generate a subalgebra $\mathfrak{h}$ and $\mathfrak{h}_1$, respectively, 
isomorphic to $\mathfrak{sl}_2$.

We first prove that $\mathfrak{h}$ and $\mathfrak{h}_1$
are equal as linear subspaces of $\mathfrak{g}$.

Inside  $\Gamma(\MG)$ we take two distinct lines $l_1$ and $l_2$
on $\langle x\rangle$ with $[l_1,l_2]=\langle x\rangle$. Notice that such lines exist.
For $i=1,2$, fix a point $\langle x_i\rangle$ on $l_i$ which is at distance $2$
from $\langle y\rangle$.
Let $y_i:=[y,x_i]$.
Then for each point $\langle z_1\rangle$ on the line through $\langle x_1\rangle$ and  $\langle y_1\rangle$, there is a  point $\langle z_2\rangle$ on the line through $\langle x_2\rangle$ and  $\langle y_2\rangle$ which is in relation $\ME_1$ with $\langle z_1\rangle$.  This follows from the observation that the group $\langle \mathrm{Exp}(x),\mathrm{Exp}(y)\rangle$ leaves the lines $\langle x_1,y_1\rangle$ and $\langle x_2, y_2\rangle$ invariant and is transitive on the points of these lines.

We claim that both in $\MG_1$ and $\MG_2$
the unique common neighbour $\langle z\rangle=\langle [z_1,z_2]\rangle$ 
of $\langle z_1\rangle$ and $\langle z_2\rangle$ is inside the subalgebra
generated by $x$ and $y$.  

Indeed, within $\mathrm{Aut}(\MG)$ we find that the elements of $\mathrm{Exp}(\langle x\rangle)$ fix, 
for $i=1,2$, the point $\langle x_i\rangle$
as well as the line spanned by $x_i$ and $y_i$. Moreover,
$\mathrm{Exp}(\langle x\rangle)$ acts transitively on the points of 
this line different from $\langle x_i\rangle$. 
Thus there is an element $e\in\mathrm{Exp}(\langle x\rangle)$ that maps
$\langle y_1\rangle$ to $\langle z_1\rangle$.
As $e$ leaves the line spanned by $x_2$ and $y_2$ invariant,
it maps $\langle y_2\rangle$ to the unique point on this line which is at distance $2$ 
from $\langle z_1\rangle$, the point $\langle z_2\rangle$.
But then $\langle y\rangle$ is mapped to $\langle z\rangle$ by the element $e$, 
as $\langle z\rangle$ is the unique common neighbor of $\langle z_1\rangle$ and $\langle z_2\rangle$.
This clearly implies that $\langle z\rangle$ is inside the subalgebra $\mathfrak{h}$ of $\MG$
generated by $x$ and $y$.  
In particular, $x,y$ and $z$ linearly span the subalgebra $\mathfrak{h}$ of $\MG$
generated by $x$ and $y$.

But similarly, these three elements are also  contained in  the subalgebra
$\mathfrak{h}_1$
of $\MG_1$ generated by $x$ and $y$ and span this subalgebra.
So, the subalgebras $\mathfrak{h}$ and $\mathfrak{h}_1$ have to coincide as linear subspaces. 
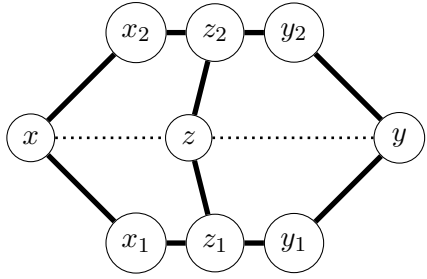
\begin{figure}
\begin{center}
\begin{tikzpicture}[scale=0.7]

\tikzstyle{every node}=[circle,draw];
\draw (0,0) node (a) {$x$};
\draw (2,-2) node(b) {$x_1$};
\draw (2,2) node (c) {$x_2$};
\draw (5,-2) node(d) {$y_1$};
\draw (5,2) node(e) {$y_2$};
\draw (7,0) node (f) {$y$};
\draw (3.5,-2) node(g) {$z_1$};
\draw (3.5,2) node(h) {$z_2$};
\draw (3,0) node(i) {$z$};

\draw[-, line width=2pt] (a)--(b);
\draw[-, line width=2pt] (a)--(c);
\draw[-, line width=2pt] (b)--(g);
\draw[-, line width=2pt] (g)--(d);

\draw[-, line width=2pt] (c)--(h) ;
\draw[-, line width=2pt] (h)--(e);

\draw[-, line width=2pt] (f)--(d);
\draw[-, line width=2pt] (f)--(e) ;

\draw[-, line width=2pt] (g)--(i);
\draw[-, line width=2pt] (h)--(i) ;

\draw[dotted, line width=1pt] (a)--(i) ;
\draw[dotted, line width=1pt] (i)--(f) ;

\end{tikzpicture}

\caption{Configuration of points}
\end{center}
\end{figure}
The above proves more.
Indeed, it shows that
the point $\langle z\rangle$ is in the $\mathrm{Exp}(\langle x\rangle)$-orbit
of $\langle y\rangle$ both with respect to $[\cdot,\cdot]$ and to $[\cdot,\cdot]_1$. Actually, these two orbits have to be equal.

In $\MG$ this orbit, together with $\langle x\rangle$, consists of all 
$1$-spaces spanned by elements 
\[ a x+by+c[x,y],
\]
where $a,b,c\in \mathbb{F}$ satisfy $ab=c^2$.

Now, suppose
\[ [x,y]_1 = \alpha x +\beta y +\gamma [x,y], \]
for some fixed $\alpha, \beta, \gamma \in \mathbb{F}$. 
Note that $[x,y]_1 \neq 0 \neq [x,y]$, since $(x,y)\in E_2$  
with respect to both Lie products by the general assumptions in \ref{setting2}.
Note moreover that $\gamma\neq 0$, 
since otherwise  \ref{charcoll} leads to a contradiction.

The images of $ y$
under elements from  $\mathrm{Exp}(\langle x\rangle)$, but now with respect to $[\cdot,\cdot]_1$,
are of the form
\[ y+\lambda [x,y]_1+\lambda^2 g_1(x,y) x=
(\lambda^2g_1(x,y) + \lambda \alpha)x+( 1+\lambda\beta)y+\lambda^2\gamma^2[x,y]
\] where $\lambda\in \MF$.
These elements are also extremal in $\MG$ and hence satisfy the equation
\[ 
 ( 1+\lambda\beta)(\lambda^2g_1(x,y) + \lambda \alpha) = \lambda^2\gamma^2.
\]
This implies that the qubic equation
\[( 1+\beta X)(g_1(x,y)X^2 + \alpha X) =  \gamma^2 X^2
\]
has  $|\MF|$ zeros.
So, if $|\MF|> 3$, this means
\begin{align*} 
  \alpha=\beta=0,\ \mathrm{and}\ \gamma^2 = g_1(x,y),
\end{align*}
and we deduce
\begin{align*}
[x,y]_1 = \gamma [x,y].
\end{align*}
But this implies that $[\cdot,\cdot ]_1$  equals $\gamma [\cdot ,\cdot ]$ and hence is a scalar multiple of $[ \cdot , \cdot ]$.

In case that $|\MF|=2$, the above equation for $\lambda=1$ reads as follwos:
\[ (1+\alpha)(1+\beta)=\gamma^2.\]
Now if $\alpha=1$ or $\beta=1$, it follows that $\gamma^2=0$ and so $\gamma=0$, which is a contradiction. So also here, $\alpha=\beta=0$ must hold.\\
In case that $|\MF|=3$, then setting $\lambda$ to be $\pm 1$ 
implies that we have the following two equations for $\alpha$ and $\beta$:
\begin{align*}
(1+\beta)(g_1(x,y)+\alpha)=1\\
(1-\beta)(g_1(x,y)-\alpha)=1.
\end{align*}
But then $\alpha=\beta=0$ and $g_1(x,y)=1$.
So, we conclude that the Lie product is unique up to scalar multiples.
\end{proof}

\begin{lem}\label{constantonline}
Let $x\in E$ and $l\in \mathcal{F}$. Suppose $y_1,y_2\in E$ span $l$. 
Then we can find an element $\lambda\in \MF^*$ with $[x,y_i]_1=\lambda[x,y_i]$ for both $i=1$ and $i=2$. 
\end{lem}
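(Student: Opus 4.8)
The goal is to show that the scalar relating $[\cdot,\cdot]_1$ to $[\cdot,\cdot]$ is constant along a line $l$, at least when we restrict to brackets with a fixed external point $x$. The starting point is Lemma~\ref{scalarforE1}: for each $i$, since $(x,y_i)$ lies in some $E_j$ with the same $j$ in $\MG$ and $\MG_1$, we already know there is a scalar $\lambda_i\in\MF^*$ (or the bracket vanishes) with $[x,y_i]_1=\lambda_i[x,y_i]$. Actually I must be slightly careful: $(x,y_i)$ could lie in $E_2$, in which case Lemma~\ref{scalarforE1} does not directly apply, but Lemma~\ref{sl2invariant} does, again yielding a scalar $\lambda_i$. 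So in every case we get scalars $\lambda_1,\lambda_2$, and the content of the lemma is that $\lambda_1=\lambda_2$ — equivalently, that these local scalars glue along the line $l$.

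The plan is to exploit the fact that all the points of $l$ are $\MF$-linear combinations of $y_1$ and $y_2$, and that $[x,\cdot]$ and $[x,\cdot]_1$ are both $\MF$-linear maps. Pick a third point $\langle y_3\rangle$ on $l$, say $y_3=y_1+y_2$ (rescaling $y_1,y_2$ so that $y_3$ is again extremal, which is possible since $l$ is a line of $\Gamma$, all of whose nonzero vectors are extremal — this is exactly condition $E_{-1}$ together with Lemma~\ref{charcoll}). Then $[x,y_3]=[x,y_1]+[x,y_2]$ and likewise $[x,y_3]_1=[x,y_1]_1+[x,y_2]_1$. Applying the three instances of the scalar relation gives
\[
\lambda_3\big([x,y_1]+[x,y_2]\big)=\lambda_1[x,y_1]+\lambda_2[x,y_2].
\]
The main step is then to argue that $[x,y_1]$ and $[x,y_2]$ are linearly independent in $\MG$, whence $\lambda_1=\lambda_3=\lambda_2$ and we are done with $\lambda:=\lambda_1$. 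If instead $[x,y_1]$ and $[x,y_2]$ are linearly dependent, one must treat that degenerate situation separately; but note that if, say, $[x,y_1]=0$ while $[x,y_2]\neq 0$, then $[x,y_3]=[x,y_2]\neq 0$ forces $[x,y_3]_1=[x,y_2]_1$, and one can simply take $\lambda=\lambda_2$ (the scalar on the nonzero brackets), since the relation $[x,y_i]_1=\lambda[x,y_i]$ holds trivially whenever $[x,y_i]=0$. The genuinely delicate sub-case is when $[x,y_1]$ and $[x,y_2]$ are both nonzero but proportional, i.e. $[x,y_2]=c[x,y_1]$ for some $c\in\MF^*$; then $[x,y_3]=(1+c)[x,y_1]$, and in characteristic dividing $1+c$ the element $y_3$ lies in the kernel of $[x,\cdot]$, so one should instead pick a different representative on $l$ (e.g. $y_1+t y_2$ for a suitable $t\in\MF^*$) to avoid the collision; for $|\MF|$ large enough such a $t$ always exists, and the small-field cases $|\MF|\in\{2,3\}$ can be checked by hand as in Lemma~\ref{sl2invariant}.

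I expect the main obstacle to be precisely the book-keeping around these degenerate configurations — in particular ruling out (or handling) the possibility that $[x,\cdot]$ collapses the whole line $l$ to a single one-dimensional space while $[x,\cdot]_1$ does something incompatible. The clean way around this is the observation that the relations $\ME_i$ are determined by $\Gamma$ alone (stated just after Setting~\ref{setting2}), so the pattern of which $[x,y]$ vanish and which do not is the same for both Lie products; combined with linearity of $[x,\cdot]$ and $[x,\cdot]_1$ this pins down a single consistent scalar $\lambda$ on the (at most one-dimensional, or two-dimensional) image of $l$ under $[x,\cdot]$. Once that is set up the conclusion $[x,y_i]_1=\lambda[x,y_i]$ for $i=1,2$ is immediate.
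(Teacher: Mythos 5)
Your overall route is the paper's route: use that every nonzero vector of $l$ is extremal (so every point of $l$ carries a scalar via Lemmas \ref{scalarforE1} and \ref{sl2invariant}), use linearity of both brackets on a third point of $l$, and use the fact that the relations $\mathcal{E}_i$ (in particular the vanishing of $[x,\cdot]$) are determined by $\Gamma$ alone. In the case where $[x,y_1]$ and $[x,y_2]$ are linearly independent your three-point computation is exactly right and gives $\lambda_1=\lambda_3=\lambda_2$.

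However, your explicit treatment of the ``delicate sub-case'' ($[x,y_2]=c[x,y_1]$ with $c\in\MF^*$) is a dead end: choosing $y_3=y_1+ty_2$ with $1+tc\neq 0$ only produces the single relation $\lambda_3(1+tc)=\lambda_1+tc\,\lambda_2$, in which $\lambda_3$ is an unconstrained scalar attached to $y_3$, so no choice of $t$ forces $\lambda_1=\lambda_2$; in particular no field-size or characteristic casework ($|\MF|\in\{2,3\}$ ``by hand'') is relevant here. The correct move is the opposite of avoiding the kernel: since the image of $[x,\cdot]$ on the $2$-space $l$ is $1$-dimensional, there is a nonzero $z\in l$ with $[x,z]=0$; this $z$ is extremal, lies in $E_{\leq 0}(x)$, hence $[x,z]_1=0$ by Lemma \ref{scalarforE1} (this is where ``the vanishing pattern is the same'' is actually used), and $z$ is not proportional to $y_1$ or $y_2$. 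Writing $y_2=\mu_1 y_1+\mu_2 z$ with $\mu_i\in\MF^*$ and expanding both brackets gives $[x,y_2]_1=\mu_1\lambda_1[x,y_1]=\lambda_1[x,y_2]$, so $\lambda_1$ works for both points. This is precisely the computation in the paper (carried out there as a single uniform contradiction argument, valid over every field), and it is the step your final paragraph gestures at but does not carry out; as written, your middle paragraph would leave the proportional case open.
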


\begin{proof}
Under the given conditions, Lemma \ref{scalarforE1} and Lemma \ref{sl2invariant} imply that there exist $\lambda_i$ for $i=1,2$  with $[x,y_i]_1=\lambda_i[x,y_i]$.
If $[x,y_1]=0$ or $[x,y_2]=0$, then clearly we can take $\lambda_1$ and $\lambda_2$ to be equal. 
So assume $[x,y_1]\neq 0\neq [x,y_2]$ and let $y_3:=-(y_1+y_2)$ such that there exists $\lambda_3$ with $[x,y_3]_1=\lambda_3[x,y_3]$.

Suppose $\lambda_1\neq \lambda_2$.
We find  
\begin{align*}0=[x,y_1+y_2+y_3]=[x,y_1]+[x,y_2]+[x,y_3]\end{align*}
and \begin{align*}0=&[x,y_1+y_2+y_3]_1=[x,y_1]_1+[x,y_2]_1+[x,y_3]_1\\
=&\lambda_1[x,y_1]+\lambda_2[x,y_2]+\lambda_3[x,y_3].\end{align*}
But this implies that $(\lambda_1-\lambda_2)[x,y_1]+(\lambda_3-\lambda_2)[x,y_3]=0$
and hence 
\begin{align*}
0=& [x,(\lambda_1-\lambda_2)y_1+(\lambda_3-\lambda_2)y_3]\\
=&[x,(\lambda_1-\lambda_2)y_1+(\lambda_3-\lambda_2)(-y_1-y_2)]\\
=&[x,(\lambda_1-\lambda_3)y_1+(\lambda_2-\lambda_3)y_2].
\end{align*}

With the definition $z:=(\lambda_1-\lambda_3)y_1+(\lambda_2-\lambda_3)y_2\neq 0$, we have $[x,z]=0$  and we find $z\in \ME_{\leq 0}(x)$ and hence $[x,z]_1=0$ by Lemma \ref{scalarforE1}.
Since $[x,y_1]\neq 0\neq [x,y_2]$, the element $z$ is not a multiple of $y_1$ or $y_2$, and hence
there are $\mu_1,\mu_2\in \MF^*$ with $y_2=\mu_1y_1+\mu_2 z$.
Now we find 
\begin{align*}[x,y_2]_1=&[x,\mu_1y_1+\mu_2 z]_1
=[x,\mu_1 y_1]_1+[x,\mu_2 z]_1\\
=&\lambda_1\mu_1[x,y_1]+0
=\lambda_1\mu_1 [x,y_1]
\end{align*}
and
\begin{align*}[x,y_2]=[x,\mu_1y_1+\mu_2 z]=\mu_1[x,y_1]+0=\mu_1[x,y_1].\end{align*}
So, $[x,y_2]_1=\lambda_1[x,y_2]$, contradicting $\lambda_1$ to be different from $\lambda_2$.
\end{proof}

We are now ready to prove the main objective of this section.

\begin{satz}\label{uniquenesslieproduct}
Let $\MG$ be a Lie algebra generated by its set of extremal elements $E$, equipped with the Lie product denoted by $[\cdot, \cdot]$ and a nondegenerate extremal form $g$. 

Assume that there is a second Lie product $[\cdot, \cdot]_1$ defined on the underlying vector space, with corresponding nondegenerate extremal form $g_1$, giving rise to the same extremal geometry $\Gamma$. 
Then, there is a $\lambda\in \MF^*$ with $[x,y]_1=\lambda [x,y]$ for all $x,y\in \MG$.
\end{satz}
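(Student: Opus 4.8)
The plan is to bootstrap from the local results already established---Lemma~\ref{scalarforE1}, Lemma~\ref{sl2invariant} and Lemma~\ref{constantonline}---to a single global scalar. The strategy is to show that the function assigning to each extremal point $\langle x\rangle$ the scalar $\lambda_x$ with $[x,y]_1=\lambda_x[x,y]$ (for all $y$ with $[x,y]\neq 0$) is well-defined, and then that it is constant on the connected collinearity graph of $\Gamma$, which is connected by Setting~\ref{notation}.

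First I would fix $x\in E$ and argue that there is a \emph{single} $\lambda_x\in\MF^*$ working for all $y\in E$ with $(x,y)\in E_1\cup E_2$: Lemma~\ref{scalarforE1} and Lemma~\ref{sl2invariant} give such a scalar for each individual $y$, and Lemma~\ref{constantonline} shows the scalar is constant along any line $l\in\mathcal{F}$. Since $\Gamma$ is connected and $\MG$ is linearly spanned by $E$, any two extremal elements $y,y'$ on which $x$ acts nontrivially are joined by a chain of lines, so all the per-$y$ scalars coincide; call the common value $\lambda_x$. Because $E$ spans $\MG$, we then get $[x,v]_1=\lambda_x[x,v]$ for \emph{all} $v\in\MG$, i.e. $\mathrm{ad}_1(x)=\lambda_x\,\mathrm{ad}(x)$ on $\MG$.

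Next I would show $\lambda_x$ is independent of $x$. Take two extremal elements $x,x'$ with $[x,x']\neq 0$ (collinear, or in relation $E_2$). From $[x,x']_1=\lambda_x[x,x']$ and, by antisymmetry, $[x,x']_1=-[x',x]_1=-\lambda_{x'}[x',x]=\lambda_{x'}[x,x']$, and since $[x,x']\neq 0$ we conclude $\lambda_x=\lambda_{x'}$. As the collinearity graph on $\ME$ is connected, all $\lambda_x$ agree; write $\lambda$ for the common value. Then $[x,v]_1=\lambda[x,v]$ for every $x\in E$ and every $v\in\MG$, and since $E$ spans $\MG$, bilinearity yields $[u,v]_1=\lambda[u,v]$ for all $u,v\in\MG$.

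The main obstacle I anticipate is making the ``single scalar for a fixed $x$'' step fully rigorous in all the degenerate configurations: for points $y\in E_1(x)$ the scalar is pinned by Lemma~\ref{scalarforE1}, for $y\in E_2(x)$ by Lemma~\ref{sl2invariant}, but one must connect these two types of neighbours of $x$ inside the extremal geometry and invoke Lemma~\ref{constantonline} to propagate the scalar along lines---this needs that the relevant points lie on enough common lines, which follows from the structure of $\Gamma$ as a root shadow space (Proposition~\ref{rss}) together with connectedness. Once the local constancy is in place, the globalisation via the connected collinearity graph and antisymmetry is routine.
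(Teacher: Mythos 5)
Your overall outline follows the paper's strategy (fix $x$, produce one scalar $\lambda_x$ valid for all of $\mathrm{ad}(x)$, then make it independent of $x$ and conclude by bilinearity), but the step you defer is exactly where the proof's real content lies, and as stated it has a genuine gap. Lemma \ref{constantonline} only transports the scalar between two spanning points $y_1,y_2$ of a line when \emph{both} satisfy $[x,y_i]\neq 0$; if one of them commutes with $x$ the conclusion is vacuous for that point and the scalar is lost. So ``$y$ and $y'$ are joined by a chain of lines'' is not enough: the chain must be chosen so that consecutive points all lie in $E_{\geq 1}(x)$, and neither connectedness of $\Gamma$ nor Proposition \ref{rss} hands you such chains. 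The paper has to work for this: for $z\in E_2(x)$ it uses the diameter-$3$ structure to find a path $y,z_1,z_2,z$ with $z_1,z_2\in E_{\geq 1}(x)$; and the case $z\in E_1(x)$ is genuinely delicate --- there the paper uses nondegeneracy of $g$ to find $u\in E_2([x,z])$, transitivity of $\mathrm{Exp}(\langle x\rangle)$ on the points of $\langle [x,z],z\rangle$ other than $\langle [x,z]\rangle$ to arrange $u\in E_1(z)$, and then the Jacobi identity together with Lemma \ref{charcoll} to exclude $z'=[u,z]$ from $E_{\leq 0}(x)$ and $E_1(x)$, producing a point $z'\in E_2(x)$ collinear with $z$ to which Lemma \ref{constantonline} can finally be applied. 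None of this is recoverable from your appeal to ``the structure of $\Gamma$ as a root shadow space together with connectedness.''

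There is also a concrete error in your globalisation step: collinear extremal points are by definition in relation $E_{-1}$, hence \emph{commute}, so ``$[x,x']\neq 0$ (collinear, or in relation $E_2$)'' is false, and your antisymmetry argument gives no information along edges of the collinearity graph; citing connectedness of that graph is therefore citing the wrong graph. The antisymmetry observation itself is nice and, once one knows $\mathrm{ad}_1(x)=\lambda_x\,\mathrm{ad}(x)$ on all of $\MG$ for every $x\in E$, it is slicker than the paper's independence argument (which works with $z\in E_2(x)$, the auxiliary element $x_2=x+x_1$ and case distinctions); but to propagate it you need connectedness of the graph $(\E,\E_2)$ (or of $(\E,\E_1\cup\E_2)$), which holds by Lemma 5 of \cite{CI07} as quoted in the proof of Theorem \ref{CIthm1}, not connectedness of the collinearity graph. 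As written, both the key ``single scalar for fixed $x$'' step and the propagation step need repair.
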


\begin{proof}
Fix a pair $(x,y)\in E_2$. Then, by Lemma \ref{sl2invariant},  there is a $\lambda\in \MF^*$ with $[x,y]_1=\lambda [x,y]$.
We prove that this element $\lambda$ is the one we are looking for.
We begin with the proof that for all $z\in E$ we have $[x,z]_1=\lambda[x,z]$.

Suppose $z\in E$ is different from $y$. 
If $[x,z]=0$, then $z\in E_{\leq 0}(y)$, hence also $[x,z]_1=0$, and $[x,z]_1=\lambda[x,z]$.

If $z\in E_2(x)$, then, as  $\Gamma$ has diameter $3$, we can find elements
$z_1$ and $z_2$ in $E_{\geq 1}(x)$  such that $\langle y,z_1\rangle$, $\langle z_1,z_2\rangle$, and $\langle z_2,z\rangle$ are in $\mathcal{F}$.
(Notice that we allow these subspaces to be equal to each other.) 
Now we can apply the above Lemma \ref{constantonline} to each of these lines and eventually find that as $[x,z_1]\neq 0\neq [x,z_2]$ that $[x,z]_1=\lambda [x,z]$.

Finally consider the case where $z\in E_1(x)$. 
Notice that $[x,z]\in E$.
As $g$ is nondegenerate, we can find an element $u\in E_2([x,z])$.
This $u$ is in $E_1(z'')$ for some $z''\in\langle [x,z],z\rangle\setminus \langle [x,z]\rangle$.
As $\mathrm{Exp}(\langle x\rangle)$ is transitive on the points of $\langle [x,z],z\rangle$ distinct from $\langle [x,z]\rangle$, we can assume that $u\in E_1(z)$. Now  take $z'$ to be $[u,z]$. Then $z'\in E_1([x,z])$.
If $z'\in E_{\leq 0}(x)$, then $[[x,z],z']=-[[z,z'],x]-[[z',x],z]=0$, which is a contradiction with $z'\in E_1([x,z])$.
So, $z'\in E_1(x)$, but then, as $z+z'\in E$, we have  $[x,z+z']=[x,z]+[x,z']\in E$ and, by \ref{charcoll}, $z'\in E_{-1}([x,z])$. Again a contradiction.
Hence, $z'\in E_2(x)$.

By the above we have $[x,z']_1=\lambda [x,z']$ and Lemma \ref{constantonline} implies now that $[x,z]_1=\lambda [x,z]$.

Since we started with a fixed $x\in E$, it remains to show that the scalar factor $\lambda$ is independent of $x$.
Clearly, multiplying $x$  with a nonzero scalar, does not change the value of $\lambda$.

Suppose $x_1\in E$ with $(x,x_1)\in E_{-1}$.
Then let $z\in E_2(x)$. Then 
$[z,x]_1=\lambda [z,x]$. If  $z\in E_2(x_1)$ then applying the above to $z$ instead of $x$ we find $[z, x_1]_1=\lambda [z, x_1]$. So also $[x_1,z]_1=\lambda [x_1,z]$. If $z\not \in E_2(x_1)$, then let $x_2=x+x_1$.
The element $x_2$ is extremal and $z\in E_2(x_2)$, so, by similar  arguments as used  above, we have $[x_2,z]_1=\lambda [x_2,z]$.
Now pick an element $z_1\in E_2(x_1)$. Then  $z_1\in E_2(x)$ and $[x,z_1]_1=\lambda [x,z_1]$, or $z_1\in E_2(x_2)$  and $[x_2,z_1]_1=\lambda [x_2,z_1]$. 
As above we get  $[x_1,z_1]_1=\lambda [x_1,z_1]$ and $\lambda$ is also the scalar for $x_1$.

Now connectedness of $\Gamma(\MG)$ implies that  the scalar is the same for all extremal elements in $E$.

But then $\lambda$ is the same for all  pairs $(x,y)$ of extremal elements in $E\times E$. Since $E$ generates $\MG$, we find for all $x,y\in\MG$ that $[x,y]_1=\lambda[x,y]$.
\end{proof}

\section{Conclusions}\label{sec:conclusions}

Combining the results of the previous two sections, we finally can characterize the Lie algebras under consideration by their extremal geometry. 

\begin{satz}\label{MainResult5}
Let $\MG$ be a Lie algebra generated by its set $E$
of extremal elements with respect to the extremal form $g$ with trivial radical. If  the natural embedding of the extremal geometry $\Gamma(\MG)$ into $\MP(\MG)$ admits an absolutely universal cover, 
then $\MG$ is uniquely determined (up to isomorphism) by $\Gamma(\MG)$.
\end{satz}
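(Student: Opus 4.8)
The plan is to assemble Theorem~\ref{MainResult5} from the two pillars already established: uniqueness of the embedding (Theorem~\ref{isoembeddings}) and uniqueness of the Lie product given the embedding (Theorem~\ref{uniquenesslieproduct}). First I would take a second Lie algebra $\MG'$ satisfying the same hypotheses, generated by its extremal elements with nondegenerate extremal form $g'$, whose extremal geometry $\Gamma(\MG')$ is isomorphic to $\Gamma(\MG)$; the task is to produce a Lie algebra isomorphism $\MG\to\MG'$. Since $\Gamma=\Gamma(\MG)$ admits an absolutely universal embedding by hypothesis, Theorem~\ref{isoembeddings} applies and tells us that the natural embeddings $\phi:\Gamma\to\MP(\MG)$ and $\phi':\Gamma(\MG')\to\MP(\MG')$ are isomorphic. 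Concretely, composing $\phi'$ with a fixed isomorphism $\Gamma(\MG')\cong\Gamma$ and with $\phi^{-1}$ yields a projective isomorphism $\MP(\MG)\to\MP(\MG')$; by the fundamental theorem of projective geometry this is induced by a semilinear bijection $t:\MG\to\MG'$ of the underlying vector spaces, and $t$ maps the set $\ME$ of extremal points of $\MG$ bijectively onto the extremal points of $\MG'$, carrying lines of $\Gamma$ to lines of $\Gamma(\MG')$.

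Next I would transport the Lie product of $\MG'$ back along $t$ to obtain a second bracket $[\cdot,\cdot]_1$ on the vector space underlying $\MG$, defined by $[u,v]_1:=t^{-1}\bigl([t(u),t(v)]'\bigr)$ (twisting scalars through the field automorphism accompanying $t$ if $t$ is properly semilinear, so that $[\cdot,\cdot]_1$ is genuinely bilinear over $\MF$ — or, if one prefers, first replace $\MG'$ by its image under the field twist so that $t$ may be taken linear). By construction $(\MG,[\cdot,\cdot]_1)$ is a Lie algebra with extremal form $g_1$ the pullback of $g'$, hence nondegenerate, and since $t$ identifies the two extremal geometries, $[\cdot,\cdot]_1$ gives rise to the same extremal geometry $\Gamma$ as $[\cdot,\cdot]$. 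Thus the hypotheses of Setting~\ref{setting2} and of Theorem~\ref{uniquenesslieproduct} are met, and we conclude there is $\lambda\in\MF^*$ with $[u,v]_1=\lambda[u,v]$ for all $u,v\in\MG$.

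Finally I would upgrade the scalar $\lambda$ to an actual isomorphism. The map $v\mapsto \lambda v$ (or any scaling; more carefully, rescaling $t$ by a suitable scalar, which is legitimate since $t$ is only determined up to a scalar as a representative of the projective isomorphism) converts the relation $[\cdot,\cdot]_1=\lambda[\cdot,\cdot]$ into an honest identity: replacing $t$ by $\mu t$ replaces $[\cdot,\cdot]_1$ by $\mu^{-1}[\cdot,\cdot]_1=\mu^{-1}\lambda[\cdot,\cdot]$ (the field twist acts trivially on scalars up to the chosen normalization), so choosing $\mu=\lambda$ makes the pulled-back bracket equal to $[\cdot,\cdot]$ exactly. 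Then the (rescaled) $t$ is simultaneously a vector space isomorphism and a Lie algebra homomorphism, i.e. $\MG\cong\MG'$ as Lie algebras, which is the claim.

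The main obstacle I anticipate is bookkeeping around semilinearity: the projective isomorphism furnished by Theorem~\ref{isoembeddings} need not be linear, so one must carefully arrange a field twist so that the transported bracket is $\MF$-bilinear before invoking Theorem~\ref{uniquenesslieproduct} (whose Setting~\ref{setting2} assumes a Lie product on the \emph{same} vector space over the same field $\MF$), and then check that the rescaling step in the last paragraph interacts correctly with that twist. Everything else is a direct concatenation of results already proved, so modulo this semilinear normalization the argument is short.
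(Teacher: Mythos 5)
Your proposal is correct and takes essentially the same route as the paper: the paper's own proof is a short combination of Theorem \ref{isoembeddings} (isomorphism of the natural embeddings) with Theorem \ref{uniquenesslieproduct} (uniqueness of the bracket up to a scalar), and you merely spell out the transport-of-bracket, semilinearity and rescaling bookkeeping that the paper leaves implicit. (One cosmetic slip: replacing $t$ by $\mu t$ scales the pulled-back bracket by $\mu$ rather than $\mu^{-1}$, so one should take $\mu=\lambda^{-1}$; this does not affect the argument.)
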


\begin{proof}
We combine the previous results. So let $\MG_1$ be a second Lie algebra with isomorphic extremal geometry $\Gamma(\MG_1)\cong \Gamma(\MG)$. By ~\ref{isoembeddings}, the projective embeddings of $\Gamma(\MG)$ and $\Gamma(\MG_1)$ are isomorphic 
and therefore $\g$ and $\g_1$ are not only isomorphic vector spaces, but
the they also have the same Lie structure, as a consequence of Theorem  ~\ref{uniquenesslieproduct}. 
\end{proof}

\begin{cor}\label{thmfortypes}
Let $\MG_1,\MG_2$ be two Lie algebras as in \ref{notation} with extremal geometries $\Gamma(\MG_1)\cong\Gamma(\MG_2)$ isomorphic to a root shadow space of type $\mathrm{BC}_{n,2}$ $(n\geq 3)$, $ \mathrm{D}_{n,2}$ $(n\geq 4)$, $\mathrm{E}_{6,2}$, $\mathrm{E}_{7,1}$, $\mathrm{E}_{8,8}$, or $\mathrm{F}_{4,1}$. Then $\MG_1\cong \MG_2$.
\end{cor}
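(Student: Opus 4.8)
The proof will simply combine Corollary~\ref{cor524} with Theorem~\ref{uniquenesslieproduct}, so essentially no new work is needed. First I would check that each $\MG_i$ ($i=1,2$) satisfies Setting~\ref{notation}: by hypothesis it is generated by its extremal elements and has nondegenerate extremal form (hence an extremal form with trivial radical), and its extremal geometry is connected and not a single point, being isomorphic to a connected root shadow space of one of the listed types; in particular $\ME_{-1}\neq\emptyset$. Thus Corollary~\ref{cor524} applies and produces an isomorphism $\phi\colon\MP(\MG_1)\to\MP(\MG_2)$ of projective spaces carrying the natural embedding $\phi_1$ of $\Gamma(\MG_1)$ to the natural embedding $\phi_2$ of $\Gamma(\MG_2)$. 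Such a $\phi$ is induced by a semilinear bijection $t\colon\MG_1\to\MG_2$ of the underlying vector spaces which maps the set of extremal points of $\MG_1$ bijectively onto that of $\MG_2$ and carries lines of $\Gamma(\MG_1)$ to lines of $\Gamma(\MG_2)$.

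Next I would transport the Lie product of $\MG_2$ back along $t$, setting
\[
[u,v]_1:=t^{-1}\big([\,t(u),t(v)\,]_{\MG_2}\big)\qquad(u,v\in\MG_1).
\]
Since $t$ is a semilinear bijection, $[\cdot,\cdot]_1$ is an $\MF$-bilinear, antisymmetric product satisfying the Jacobi identity, i.e.\ a Lie product on the vector space underlying $\MG_1$; its extremal elements are precisely the $t$-preimages of the extremal elements of $\MG_2$, hence coincide with the extremal elements of $\MG_1$, and its extremal form is nondegenerate because that of $\MG_2$ is. As $t$ restricts to an isomorphism of point-line geometries between the extremal geometry of $(\MG_1,[\cdot,\cdot]_1)$ and $\Gamma(\MG_2)\cong\Gamma(\MG_1)$, respecting the embedding into projective space, the extremal geometry attached to $[\cdot,\cdot]_1$ is again $\Gamma(\MG_1)$ as embedded in $\MP(\MG_1)$. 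Therefore $(\MG_1,[\cdot,\cdot])$ together with $[\cdot,\cdot]_1$ is in the situation of Setting~\ref{setting2}, and Theorem~\ref{uniquenesslieproduct} yields a scalar $\lambda\in\MF^*$ with $[u,v]_1=\lambda[u,v]$ for all $u,v\in\MG_1$.

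Finally I would note that rescaling the bracket only changes the Lie algebra up to isomorphism: the dilation $u\mapsto\lambda^{-1}u$ is a Lie algebra isomorphism from $(\MG_1,[\cdot,\cdot])$ onto $(\MG_1,[\cdot,\cdot]_1)$, and by construction $t$ is an isomorphism from $(\MG_1,[\cdot,\cdot]_1)$ onto $\MG_2$. Composing these gives $\MG_1\cong\MG_2$. There is no real obstacle in this argument; all the substance sits in Corollary~\ref{cor524} (ultimately the Kasikova--Shult existence result, Theorem~\ref{KasikovaShult}, together with the polarization machinery of Section~\ref{sec:rss}) and in Theorem~\ref{uniquenesslieproduct}. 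One could also phrase the whole thing as a direct application of Theorem~\ref{MainResult5}, after observing via Corollary~\ref{cor:KS} that the natural embedding of $\Gamma(\MG_i)$, being polarized with trivial radical, is covered by the absolutely universal embedding of the geometry.
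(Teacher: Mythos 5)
Your proposal is correct and follows essentially the same route as the paper: the paper invokes the Kasikova--Shult existence of an absolutely universal embedding and then Theorem~\ref{MainResult5}, whose proof is exactly your combination of the isomorphism of natural embeddings (Theorem~\ref{isoembeddings}/Corollary~\ref{cor524}) with the uniqueness of the Lie product (Theorem~\ref{uniquenesslieproduct}). You merely make explicit the transport-of-structure step (pulling back the bracket along the semilinear bijection inducing the embedding isomorphism and rescaling), which the paper leaves implicit.
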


\begin{proof}
As a consequence of the main result \ref{KasikovaShult} in \cite{KS01},  the root shadow space of $\MG_1$ (and $\MG_2$) of one of the given types has an absolutely universal cover. So Theorem \ref{MainResult5} applies.
\end{proof}

The above results provide a proof of Theorem \ref{mainthm}.
Indeed, suppose $\g$ is a simple Lie algebra generated by its set $E$ of extremal elements. Then the extremal form $g$ is nondegenerate. By the assumptions of Theorem \ref{mainthm} and 
Theorem \ref{CIthm1} we find the extremal geometry $\Gamma(\g)$ to be a root shadow space of type $\mathrm{A}_{n,\{1,n\}}$ $(n\geq 3)$, $ \mathrm{BC}_{n,2}$ $(n\geq 3)$, $ \mathrm{D}_{n,2}$ $(n\geq 4)$, $\mathrm{E}_{6,2}$, $\mathrm{E}_{7,1}$, $\mathrm{E}_{8,8}$, or $\mathrm{F}_{4,1}$.
Now we can apply the above corollary to find that $\g$ is uniquely determined by its extremal geometry, unless the extremal geometry is of type $\mathrm{A}_{n,\{1,n\}}$ $(n\geq 3)$. In that case we refer to \cite{CRS14}.

\section{Extremal elements in classical Lie algebras}

\label{sec:chevalley}

In this final section we investigate extremal elements in 
classical Lie algebras. 
Although most of the results are well known, we present them now within the theory of extremal elements.

Let $\Phi$ be an irreducible root system with dot product $( \cdot,\cdot )$ and $\Phi^+$ the set of positive roots.
Then consider $$\ch_{\Phi}=\bigoplus_{\alpha\in \Phi^+}\mathbb{Z}x_\alpha+\mathbb{Z}x_{-\alpha}+\mathbb{Z}h_\alpha$$
where for $\alpha\in \Phi^+$ the elements $x_\alpha$, $x_{-\alpha}$ and $h_\alpha$ form a basis. Let $h_{-\alpha}=-h_{-\alpha}$.
Then on $\ch_{\Phi}$ we define a bilinear product 
$$[\cdot,\cdot]:\ch_{\Phi}\times \ch_{\Phi}\rightarrow \ch_{\Phi}$$
by the following rules:
\begin{align*}
[h_\alpha,h_\beta]=&0\\
[h_\alpha,x_{\beta}]=&2\frac{( \beta, \alpha)}{( \alpha,\alpha)} x_{\beta}\\
[x_{\alpha},x_{\beta}]=&\begin{dcases} N_{\alpha, \beta}x_{\alpha+\beta} & \text{if } \alpha+\beta\in\Phi , \\
h_\alpha & \text{if } \beta=-\alpha,\\ 0 &\text{otherwise}, \end{dcases}
\end{align*}
 where $\alpha, \beta\in \Phi$.  The numbers $N_{\alpha, \beta}$ are integral \textit{structure constants} chosen to be $\pm (p_{\alpha, \beta}+1)$, where $p_{\alpha, \beta}$ is the biggest number such that $-p_{\alpha, \beta}\alpha+\beta$ is a root.

For a suitable choice of signs this product defines a Lie algebra called the  \textit{integral Chevalley Lie algebra}, see \cite{Tits66,Car72}. The formal basis elements $x_{\alpha}$, $ \alpha\in \Phi$, and $h_\alpha$ with $\alpha$ positive,  form a \textit{Chevalley basis} of $\ch_{\Phi}$. 
A Lie algebra $\ch_\Phi(\MF):=\ch_{\Phi}\otimes \MF$ obtained by tensoring with a field $\MF$ will be called a \textit{Chevalley Lie algebra}.

In this section, we investigate extremal elements in a Chevalley Lie algebra
$\ch_\Phi(\MF)$ over a field $\mathbb{F}$.
To avoid several technical difficulties, we restrict our attention to the case
that the characteristic of $\mathbb{F}$ is different from $2$. (Notice that most results remain true in the characteristic $2$ case.)

\begin{exam}
Let $V$ be a vector space over the field $\mathbb{F}$ of dimension $n+1$.
If $e_1,\dots,e_{n+1}$ is a basis of $V$ with dual basis $\phi_1,\dots,\phi_{n+1}$, then the elements $x_{\epsilon_i-\epsilon_j}=e_i\otimes \phi_j$, with $i\neq j$
and $h_{\epsilon_i-\epsilon_j}=e_i\otimes\phi_i-e_j\otimes\phi_j$ form a Chevalley basis of type $\mathrm{A}_n=\{\epsilon_i-\epsilon_j\mid 1\leq i\neq j\leq n+1\}$ for the  Lie algebra $\mathfrak{sl}(V)$ of Example \ref{examplesl}.

Suppose $V$ has dimension $n+1=2m$, the characteristic of $\mathbb{F}$ is not $2$, and $b$ is a symmetric bilinear form on $V$ satisfying 
$b(e_{i},e_{j})=0$ unless
$\{i,j\}=\{2k-1,2k\}$ for some $k$ with $1\leq k\leq m$, in which case 
$b(e_{i},e_{j})=1$.
Denote by $b_i$ the linear form $b_{e_i}$ as in \ref{exampleorth}.
Then the Siegel elements $x_{\epsilon_i-\epsilon_j}=e_{2i-1}\otimes b_{2j-1}-e_{2j}\otimes b_{2i}$, $x_{\epsilon_i+\epsilon_j}=e_{2i-1}\otimes b_{2j}-e_{2j-1}\otimes b_{2i}$
and $x_{-\epsilon_i-\epsilon_j}=e_{2i}\otimes b_{2j-1}-e_{2j}\otimes b_{2i-1}$
together with the elements $h_\alpha=[x_\alpha,x_{-\alpha}]$ form a Chevalley basis
of type $\mathrm{D}_m=\{\epsilon_i\pm \epsilon_j\mid 1\leq i, j\leq m, i\neq j\}$ in $\mathfrak{so}(V,b)$.
\end{exam}

A {\em long (short) root element}  in $\ch_\Phi(\MF)$  is an element in the $\mathrm{Aut}(\ch_\Phi(\mathbb{F}))$-orbit of an element $x_\alpha$ where $\alpha$ is a long (short) root in $\Phi$. 

We notice that, since $x_\alpha$ is $\mathrm{ad}$-nilpotent for both $\alpha$ being short or long, we find $\mathrm{exp}(x_\alpha,\lambda)$, where $\lambda\in \MF$,  to be an automorphism of $\ch_\Phi(\MF)$. Here $\mathrm{exp}(x_\alpha,\lambda)=\mathrm{exp}(\lambda x_\alpha)$ in the language of \cite{Car72}.

\begin{prop}
The long root elements in $\ch_\Phi(\mathbb{\MF})$ are extremal.
\end{prop}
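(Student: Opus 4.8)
The plan is to reduce the claim to a single explicit computation in the rank-one subalgebra generated by $x_\alpha$ and $x_{-\alpha}$, together with a dimension count for the ad-action of $x_\alpha$. First I would recall that a long root $\alpha$ has the property that for every root $\beta$, the $\alpha$-string through $\beta$ has length at most $2$; equivalently, $2\frac{(\beta,\alpha)}{(\alpha,\alpha)} \in \{0,\pm 1, \pm 2\}$, and the value $\pm 2$ occurs only for $\beta = \pm\alpha$. Consequently, for the basis element $x_\beta$ with $\beta \neq \pm\alpha$ we have $[x_\alpha,[x_\alpha,x_\beta]] = 0$, since $\alpha + \beta$ and $2\alpha + \beta$ cannot both be roots when $\alpha$ is long (the string through $\beta$ has length $\le 2$). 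Likewise $[x_\alpha,[x_\alpha,h_\beta]] \in \MF x_\alpha$ directly from $[x_\alpha,h_\beta] \in \MF x_\alpha$ and $[x_\alpha,x_\alpha]=0$. The only remaining basis direction is $x_{-\alpha}$: here $[x_\alpha,x_{-\alpha}] = h_\alpha$ and $[x_\alpha,h_\alpha] = -2 x_\alpha$ (using $2\frac{(\alpha,\alpha)}{(\alpha,\alpha)}=2$), so $[x_\alpha,[x_\alpha,x_{-\alpha}]] = -2x_\alpha \in \MF x_\alpha$. Extending bilinearly over the Chevalley basis of $\ch_\Phi(\MF)$, this shows $[x_\alpha,[x_\alpha,y]] \in \MF x_\alpha$ for all $y$, i.e. $x_\alpha$ satisfies the defining relation \eqref{extr} (with $\chara\MF \neq 2$, so the Premet identities \eqref{P1}, \eqref{P2} follow automatically, as noted after the definition of extremal elements). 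Reading off the coefficient gives the extremal form: $g_{x_\alpha}(y)$ is, up to the factor $2$, the $x_{-\alpha}$-component of $y$ in the Chevalley basis; in particular $g_{x_\alpha}(x_{-\alpha}) = -1 \neq 0$, so $x_\alpha$ is pure.

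Next I would check that $x_\alpha$ is genuinely nonzero and extremal as an element, not merely that the relation holds — this is immediate since it is a basis vector — and then transport the property along automorphisms. The set of long root elements is by definition the $\mathrm{Aut}(\ch_\Phi(\MF))$-orbit of $x_\alpha$ for a long root $\alpha$; since the defining conditions for extremality are preserved by Lie algebra automorphisms (if $\sigma \in \mathrm{Aut}(\g)$ and $x$ is extremal with form $g_x$, then $\sigma(x)$ is extremal with form $g_x \circ \sigma^{-1}$), every element of the orbit is extremal. This completes the argument.

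The step I expect to require the most care is the verification that $[x_\alpha,[x_\alpha,x_\beta]] = 0$ for $\beta \neq \pm\alpha$ — that is, the combinatorial fact that a long root cannot generate an $\alpha$-string of length $3$ through any root. The cleanest way to see this is: if $\beta$ and $\alpha+\beta$ are both roots with $\alpha$ long, then $(\alpha+\beta, \alpha+\beta) \le (\alpha,\alpha)$ forces $2(\alpha,\beta) + (\beta,\beta) \le 0$, while $2(\alpha,\beta)/(\alpha,\alpha) \ge -1$ (else $\beta = -\alpha$ or the pairing is out of range); combining these bounds the $\alpha$-string and rules out $2\alpha+\beta \in \Phi$. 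One could also simply invoke the standard classification of root strings in an irreducible root system (e.g. \cite{Car72}): strings have length $\le 4$ in general, and length $\le 2$ whenever one end of the "jump" is along a long root. Either way this is a finite, root-system-theoretic check, independent of the field, and the rest of the proof is the short bilinear bookkeeping sketched above.
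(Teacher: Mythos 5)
Your proof is correct and follows essentially the same route as the paper: verify $[x_\alpha,[x_\alpha,y]]\in\MF x_\alpha$ on the Chevalley basis, using that a long root $\alpha$ admits no root string of length $3$ (so the $x_\beta$ terms with $\beta\neq-\alpha$ vanish), and handle $x_{-\alpha}$ and $h_\beta$ by direct computation, with the Premet identities automatic in characteristic $\neq 2$. Your explicit remark that extremality transfers along the $\mathrm{Aut}(\ch_\Phi(\MF))$-orbit is left implicit in the paper but is a harmless (indeed welcome) addition.
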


\begin{proof}
Let $\alpha$ be a long root in $\Phi$.
As the characteristic of $\mathbb{F}$ is different from $2$, it suffices to
prove that $[x_{\alpha},[x_{\alpha},y]]\in \mathbb{F}x_{\alpha}$ for all $y$  in the Chevalley basis.

If $y=x_\beta$ for any $\beta\in \Phi$ different from $-\alpha$, 
then $[x_\alpha,[x_\alpha,y]]=[x_\alpha,[x_\alpha,x_\beta]]=0$ since at least one of $\alpha+\beta$ and
 $2\alpha+\beta$ is not in $\Phi$.

If $y=x_{-\alpha}$, then  $[x_\alpha,[x_\alpha,y]]=[x_\alpha,[x_\alpha,x_{-\alpha}]]=
[x_\alpha,h_\alpha]=-2 x_\alpha$.

Finally for  $y=h_\beta$ we have
$[x_\alpha,[x_\alpha,y]]=[x_\alpha,-2\frac{( \alpha,\beta)}{( \beta,\beta)} x_\alpha]=0$.
\end{proof}

\begin{lem}\label{generatedbylongroots}
The long root elements linearly span $\mathfrak{ch}_\Phi(\MF)$.
\end{lem}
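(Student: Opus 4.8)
The plan is to show that the span of the long root elements is an ideal of $\ch_\Phi(\MF)$ and then invoke simplicity-type considerations; but since $\ch_\Phi(\MF)$ need not be simple in all characteristics, I would instead argue directly using the Chevalley relations. First I would observe that it suffices to produce, inside the span $\mathfrak{L}$ of the long root elements, each basis vector $x_\alpha$ (for $\alpha$ a short root, if any) and each $h_\alpha$, since the long root vectors $x_\alpha$ themselves are of course in $\mathfrak{L}$.

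For the Cartan part: pick any long root $\alpha$; then $h_\alpha=[x_\alpha,x_{-\alpha}]\in\mathfrak{L}$, and as $\alpha$ ranges over the long roots these $h_\alpha$ span the whole Cartan subalgebra $\mathfrak{h}=\bigoplus_\beta \MF h_\beta$. This is a standard fact about irreducible root systems: the long roots span the ambient Euclidean space (indeed they form a root subsystem of full rank), hence the corresponding coroots span $\mathfrak{h}$. So $\mathfrak{h}\subseteq\mathfrak{L}$. The remaining task is to recover the short root vectors $x_\gamma$ when $\Phi$ is not simply laced. Here I would use that for a short root $\gamma$ there is always a long root $\alpha$ with $\alpha+\gamma\in\Phi$ (this again follows from the structure of the irreducible non-simply-laced systems $\mathrm{B}_n,\mathrm{C}_n,\mathrm{F}_4,\mathrm{G}_2$ — one checks it type by type, or argues via the fact that the long roots are not all orthogonal to $\gamma$ and a suitable one can be chosen so the string through $\gamma$ in the $\alpha$-direction reaches a root), together with the fact that $\alpha+\gamma$ is then necessarily a short root (a long root plus a short root, when a root, is short, by length considerations). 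Wait — I actually want to go the other way: starting from long root vectors, I want to land on short ones. Given $\gamma$ short, choose a long root $\alpha$ with $\gamma-\alpha$ a root; such $\alpha$ exists for the same reason. Then $\gamma - \alpha$ is a short root, and $[x_\alpha, x_{\gamma-\alpha}] = N_{\alpha,\gamma-\alpha}\, x_\gamma$ with $N_{\alpha,\gamma-\alpha}=\pm(p+1)$ nonzero; but this expresses $x_\gamma$ in terms of $x_\alpha$ (long) and $x_{\gamma-\alpha}$ (short), which is circular. To break the circularity I would instead use that $\ch_\Phi(\MF)$ is generated as a Lie algebra by the long root vectors: it is classical (see \cite{Car72}) that for irreducible $\Phi$ the long root subgroups already generate the Chevalley group, equivalently the long root vectors generate the Lie algebra, because every short root is obtainable from long roots by repeated addition within $\Phi$ — e.g. in $\mathrm{B}_n$, $\epsilon_i=(\epsilon_i+\epsilon_j)+(\epsilon_i-\epsilon_j)$ with both summands long; in $\mathrm{C}_n$, $\epsilon_i-\epsilon_j=(\ldots)$ similarly; in $\mathrm{G}_2$ and $\mathrm{F}_4$ one checks directly. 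Once the long root vectors generate the Lie algebra, the subalgebra they generate is all of $\ch_\Phi(\MF)$, and since the span of long root elements is a subalgebra containing them (it is $\operatorname{Aut}$-invariant, hence closed under brackets in the span, because brackets of long root vectors in a Chevalley basis are $\pm$ basis vectors or zero, and one checks these basis vectors are themselves long root elements or in the span via the Cartan argument), we conclude $\mathfrak{L}=\ch_\Phi(\MF)$.

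Let me streamline the actual write-up into a cleaner argument. I would set $\mathfrak{L} = \operatorname{span}_\MF\{x : x \text{ a long root element}\}$. Step one: $\mathfrak{L}$ is invariant under $\operatorname{Aut}(\ch_\Phi(\MF))$ and in particular under all $\operatorname{exp}(x_\alpha,\lambda)$; since it is spanned by an $\operatorname{Aut}$-orbit it is in fact a Lie subalgebra (the bracket of two elements of an orbit, expanded via $\operatorname{exp}$, lies in the span of the orbit — this is the same trick used after Proposition \ref{2generators} in the excerpt). Step two: $\mathfrak{L}$ contains the Cartan subalgebra $\mathfrak{h}$, because $h_\alpha=[x_\alpha,x_{-\alpha}]\in\mathfrak{L}$ for long $\alpha$ and the long coroots span $\mathfrak{h}$ (full-rank root subsystem). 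Step three: for every short root $\gamma$, $x_\gamma\in\mathfrak{L}$: use $[h_\alpha,x_\gamma]=\langle\gamma,\alpha^\vee\rangle x_\gamma$ together with $x_\gamma$ appearing as $[x_{\gamma-\alpha},x_\alpha]$ for a long root $\alpha$ chosen with $\gamma-\alpha\in\Phi$ — iterate, peeling off long roots, using that in each non-simply-laced type every root is an $\MN$-combination of long roots lying in $\Phi$ at each partial sum; since the chain terminates at long roots which are in $\mathfrak{L}$ and each bracket stays in the subalgebra $\mathfrak{L}$, we get $x_\gamma\in\mathfrak{L}$. Therefore $\mathfrak{L}$ contains a full Chevalley basis, so $\mathfrak{L}=\ch_\Phi(\MF)$.

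The main obstacle, and the step I expect to require the most care, is Step three — the case analysis showing every short root vector lies in the subalgebra generated by the long root vectors. The simply laced types are trivial (all roots long, nothing to prove). For $\mathrm{B}_n$, $\mathrm{C}_n$, $\mathrm{D}_n$-related, $\mathrm{F}_4$ and $\mathrm{G}_2$ one must exhibit, for each short root, an explicit expression as a sum of long roots with all partial sums in $\Phi$, and check the relevant structure constants $N_{\alpha,\beta}$ are nonzero (automatic in char $\neq 2$ since they are $\pm(p_{\alpha,\beta}+1)$ and the relevant $p\geq 0$). I would handle $\mathrm{G}_2$ and $\mathrm{B}_2$ by direct inspection of their small root systems and handle $\mathrm{B}_n$, $\mathrm{C}_n$, $\mathrm{F}_4$ by the uniform observation that a short root is either $\epsilon_i$ (in $\mathrm{B}$-type coordinates), equal to $\tfrac12[(\epsilon_i+\epsilon_j)+(\epsilon_i-\epsilon_j)]$ — no wait, that has a $\tfrac12$; better: $\epsilon_i = (\epsilon_i - \epsilon_j) + \epsilon_j$ is not helpful either since $\epsilon_j$ is short. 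The clean statement for $\mathrm{B}_n$: $2\epsilon_i = (\epsilon_i+\epsilon_j)+(\epsilon_i-\epsilon_j)$ is not a root. Hmm — so in $\mathrm{B}_n$ one uses instead $[x_{\epsilon_i+\epsilon_j}, x_{-\epsilon_j}]$, but $\epsilon_j$ is short, reintroducing the problem. The genuinely correct resolution, which I would use, is the known fact (\cite{Car72}, or a direct check) that the long root vectors generate $\ch_\Phi(\MF)$ for all irreducible $\Phi$ in characteristic $\neq 2$ — equivalently the long root subgroups generate the adjoint Chevalley group — and cite/verify it case by case rather than seeking a slick uniform formula; this verification is the crux of the lemma.
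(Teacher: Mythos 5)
Your Step one (the span $\mathfrak{L}$ of the orbit is a subalgebra, via the exponential trick) is fine, and your Cartan step is essentially correct, though in positive characteristic you need the long coroots to span the coroot lattice over $\mathbb{Z}$, not merely the Euclidean space over $\mathbb{Q}$ (true, but it is an integral check, not just "full rank"). The genuine gap is Step three, and your own fallback does not repair it. A short root is never an integral combination of long roots: in types $\mathrm{B}_n$, $\mathrm{C}_n$, $\mathrm{F}_4$, $\mathrm{G}_2$ the long roots span a proper sublattice of the root lattice (the $\mathrm{D}_n$-lattice, $2\mathbb{Z}^n$, the $\mathrm{D}_4$-lattice, an index-$3$ $\mathrm{A}_2$-lattice) containing no short root, so no scheme of "peeling off long roots with all partial sums in $\Phi$" can exist -- you noticed this yourself for $\mathrm{B}_n$. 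Moreover the fact you finally lean on is false: the long root \emph{vectors} $x_\alpha$, $\alpha$ long, do not generate $\ch_\Phi(\MF)$ as a Lie algebra. Since the sum of two long roots, if a root, is again long (same lattice argument), the subalgebra they generate is contained in $\bigoplus_{\alpha\ \mathrm{long}}\MF x_\alpha+\sum_{\alpha\ \mathrm{long}}\MF h_\alpha$; in type $\mathrm{C}_n$ this is $\mathfrak{sl}_2^{\oplus n}\subsetneq\mathfrak{sp}_{2n}$, in type $\mathrm{B}_n$ it is the $\mathrm{D}_n$-subalgebra. The group-level statement that long root subgroups generate the Chevalley group is true but not equivalent: a root subgroup acts through $\mathrm{exp}(\lambda\,\mathrm{ad}\,x_\alpha)$, which is not an element of the Lie subalgebra generated by the $x_\alpha$.

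The missing idea is that the lemma is about long root \emph{elements}, i.e.\ the whole $\mathrm{Aut}(\ch_\Phi(\MF))$-orbit of the $x_\alpha$ with $\alpha$ long, and this orbit contains elements with nonzero short-root components; the paper's proof uses precisely such elements. For $\Phi$ of type $\mathrm{B}_n$ or $\mathrm{C}_n$ and $\beta$ short, choose a long root $\alpha$ with $\alpha+2\beta$ long and compute $\mathrm{exp}(x_{-\alpha-\beta},1)(x_{\alpha+2\beta})=x_{\alpha+2\beta}-x_\beta-x_{-\alpha}$; the left-hand side is the image of a long root vector under an automorphism, hence itself a long root element, so $x_\beta=-\mathrm{exp}(x_{-\alpha-\beta},1)(x_{\alpha+2\beta})+x_{\alpha+2\beta}+x_{-\alpha}$ exhibits $x_\beta$ as a linear combination of three long root elements. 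Type $\mathrm{F}_4$ reduces to a $\mathrm{B}_2$-subsystem, type $\mathrm{G}_2$ is handled by an analogous exponential computation, and then $h_\alpha$ for every $\alpha$ comes from $\mathrm{exp}(x_\alpha,1)(x_{-\alpha})=x_{-\alpha}+h_\alpha-x_\alpha$. Any argument, like yours, that only ever brackets long root vectors with each other stays inside the long-subsystem subalgebra and cannot reach the short root vectors.
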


\begin{proof}
First we show that every $x_\beta$, where $\beta$ is a short root in $\Phi$, is
a linear combination of long root elements.

If $\Phi$ is of type $\mathrm{B}_n$ or $\mathrm{C}_n$ and $\beta$ is a short root, then
we can find a long root $\alpha$ such that $2\beta+\alpha$ is long.
Indeed, if $\Phi$ is of type $\mathrm{B}_n$ then,
without loss of generality we can assume  $\beta=\epsilon_1$ and
take $\alpha=\epsilon_2-\epsilon_1$ and if $\Phi$ is of type $\mathrm{C}_n$
then $\beta$ can be assumed to be $-\epsilon_1+\epsilon_2$ and we can take $\alpha=2\epsilon_1$.
But then
$\exp(x_{-\alpha-\beta},1)(x_{\alpha+2\beta})=x_{\alpha+2\beta}-x_\beta-x_{-\alpha},$
so 
$x_\beta=-\exp(x_{-\alpha-\beta},1)(x_{\alpha+2\beta})+x_{\alpha+2\beta}+x_{-\alpha},$
a linear combination of three long root elements.
 
If $\Phi$ is of type $\mathrm{F}_4$, every short root is inside a $\mathrm{B}_2$ subsystem and we can apply the above.

So it remains to consider the $\mathrm{G}_2$-case and $\beta$ a short root.
Then we can pick a long root $\alpha$ such that
$\alpha, \alpha+3\beta$ and $2\alpha+3\beta$ are long.
Notice that $\alpha+\beta$ is short.
But then $\mathrm{exp}(x_{-\alpha-\beta},1)(x_{2\alpha+3\beta})+\mathrm{exp}(x_{-\alpha-\beta},1)(x_{2\alpha+3\beta})=2x_\beta+2x_{2\alpha+3\beta}.$

So $x_\beta=\frac{1}{2}(\mathrm{exp}(x_{-\alpha-\beta},1)(x_{2\alpha+3\beta})+\mathrm{exp}(x_{-\alpha-\beta},1)(x_{2\alpha+3\beta})-2x_{2\alpha+3\beta}),$
and hence a linear combination of three long root elements.

Now consider the elements $h_\alpha$, where $\alpha\in \Phi^+$.
We have

%%% x_{-\alpha}+ [x_\alpha_,x_{-\alpha}]+\frac{1}{2}[x_\alpha,h_\alpha]=   01 00 +10 0-1 +00 -10%%
$$\mathrm{exp}(x_\alpha,1)(x_{-\alpha})=x_{-\alpha}+h_\alpha- x_\alpha,$$
from which we deduce that $h_\alpha$ is a linear combination of short or long root elements and hence also a linear combination of long root elements.
\end{proof}

Let $g$ be the extremal form on $\ch_\Phi(\MF)$, and denote by $\E$ the set of extremal points of $\ch_\Phi(\MF)$.

\begin{lem}\label{connectedness}
The extremal points $\langle x_\alpha\rangle$, with $\alpha\in \Phi$ a long root, are contained in a single connected component of the graph $(\E,\E_2)$.
\end{lem}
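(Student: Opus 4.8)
The plan is to show that for any two long roots $\alpha, \beta \in \Phi$, the extremal points $\langle x_\alpha\rangle$ and $\langle x_\beta\rangle$ can be joined by a path in the graph $(\E, \E_2)$, i.e.\ there is a chain of long root elements in which consecutive members $u, v$ satisfy $g(u,v) \neq 0$. Since the Weyl group acts transitively on the long roots of an irreducible root system, and the Weyl group is generated by reflections in long roots (for the reflection $s_\alpha$ with $\alpha$ long is realized inside the rank-one subalgebra $\langle x_\alpha, x_{-\alpha}\rangle \cong \mathfrak{sl}_2(\MF)$ via a product of $\mathrm{exp}$-automorphisms), it suffices to connect $\langle x_\alpha\rangle$ to $\langle x_{s_\beta(\alpha)}\rangle$ for each long root $\beta$. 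The key point is that such automorphisms move extremal points within a single connected component of $(\E,\E_2)$: by Proposition~\ref{orbit}, if $g_x(y)\neq 0$ then $\langle x\rangle$ and $\langle y\rangle$ lie in the same $\mathrm{Aut}(\g)$-orbit, and more to the point the argument there shows they are connected by an edge of $(\E,\E_2)$.

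Concretely, first I would reduce to showing: for long roots $\alpha, \beta$ with $\alpha \neq \pm\beta$, there is a path from $\langle x_\alpha\rangle$ to $\langle x_\beta\rangle$. I would handle this by finding a long root $\gamma$ (or a short chain of long roots) such that consecutive root elements pair nontrivially under $g$. The cleanest approach: $g(x_\alpha, x_{-\alpha}) \neq 0$ since $\langle x_\alpha, x_{-\alpha}\rangle \cong \mathfrak{sl}_2(\MF)$ by the $[x_\alpha, x_{-\alpha}] = h_\alpha \neq 0$ relation, so by Proposition~\ref{orbit} (and Proposition~\ref{geometry}(e)) $\langle x_\alpha\rangle$ and $\langle x_{-\alpha}\rangle$ are adjacent in $(\E,\E_2)$. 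Then, using the automorphisms $\mathrm{exp}(x_\alpha, \lambda)$, $\mathrm{exp}(x_{-\alpha}, \mu)$ which together generate an $\mathrm{SL}_2$-action, the Weyl reflection $s_\alpha$ lifts to an automorphism $n_\alpha$ of $\ch_\Phi(\MF)$ with $n_\alpha(\langle x_\gamma\rangle) = \langle x_{s_\alpha(\gamma)}\rangle$ up to sign. Applying $n_\alpha$ to the edge $\{\langle x_\gamma\rangle, \langle x_{-\gamma}\rangle\}$ (where we pick $\gamma$ long with $s_\alpha(\gamma)=\beta$, possible by Weyl-transitivity on long roots, at least after composing a bounded number of reflections) transports it to an edge at $\langle x_\beta\rangle$, and chaining these edges along a word for the relevant Weyl element gives the desired path, since automorphisms preserve $(\E,\E_2)$ and hence map connected components to connected components.

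The main obstacle I anticipate is bookkeeping the signs and making the reflection lift explicit: one must verify that $n_\alpha := \mathrm{exp}(x_\alpha,1)\,\mathrm{exp}(-x_{-\alpha},1)\,\mathrm{exp}(x_\alpha,1)$ (or a similarly normalized product, cf.\ the computation $\mathrm{exp}(x,1)y = \mathrm{exp}(y,1)x$ in Proposition~\ref{orbit}) is a genuine automorphism of $\ch_\Phi(\MF)$ — which is guaranteed by Proposition~\ref{exp} since each $x_{\pm\alpha}$ is $\mathrm{ad}$-nilpotent — and that it sends $x_\gamma$ into $\MF^* x_{s_\alpha(\gamma)}$ for every root $\gamma$; this is the standard Chevalley-group computation and I would cite \cite{Car72} for it rather than redo it. A secondary subtlety is that for non-simply-laced $\Phi$ one must make sure the roots $\gamma$ used in the chain are long (so that $\langle x_\gamma\rangle$ is among the points we are trying to connect and so that $\langle x_\gamma, x_{-\gamma}\rangle \cong \mathfrak{sl}_2$ genuinely gives an $\E_2$-edge); but since $s_\alpha$ with $\alpha$ long maps long roots to long roots and the long roots form a single Weyl orbit, every intermediate root in a reduced expression applied to a long root stays long, so this causes no trouble.
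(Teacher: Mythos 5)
There is a genuine gap, and it sits exactly where the paper has to work hardest: type $\mathrm{C}_n$. Your reduction rests on the assertion that the Weyl group is generated by reflections in long roots; this is false in every non-simply-laced type, and what your argument actually needs --- that the subgroup $W_{\ell}$ generated by the long-root reflections is transitive on the long roots --- holds in types $\mathrm{B}_n$ ($n\geq 3$), $\mathrm{F}_4$ and $\mathrm{G}_2$ (even though $W_\ell$ is proper there), but fails in type $\mathrm{C}_n$. Indeed, in $\mathrm{C}_n$ the long roots are $\pm 2\eps_i$, pairwise orthogonal up to sign, and $s_{2\eps_i}$ is just the sign change in the $i$-th coordinate, so the $W_\ell$-orbit of $2\eps_i$ is $\{\pm 2\eps_i\}$: no chain of long-root reflections (nor of pairwise non-orthogonal long roots) connects $2\eps_i$ to $2\eps_j$ for $i\neq j$; your closing remark about reduced expressions conflates transitivity of the full Weyl group with transitivity of $W_\ell$. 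Worse, since for $\alpha$ long the $\alpha$-string through any root $\delta\neq\pm\alpha$ has length at most two, one has $g(x_\alpha,x_\delta)\neq 0$ only for $\delta=-\alpha$; so among the points $\langle x_\gamma\rangle$ with $\gamma$ long the only $\E_2$-edges are the pairs $\{\langle x_\gamma\rangle,\langle x_{-\gamma}\rangle\}$, and in type $\mathrm{C}_n$ connectivity can only be achieved through extremal points \emph{not} of the form $\langle x_\gamma\rangle$. This is precisely what the paper does there: it uses $y=\exp(x_{-\eps_i-\eps_j},1)(x_{2\eps_i})=x_{2\eps_i}-x_{\eps_i-\eps_j}-x_{-2\eps_j}$ (an exponential of a \emph{short} root element, legitimate because that element is $\mathrm{ad}$-nilpotent) and checks $g(x_{-2\eps_i},y)\neq 0\neq g(x_{2\eps_j},y)$. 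Your proposal contains no mechanism producing such a bridge.

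Even in the types where $W_\ell$ is transitive, the chaining step is not yet a proof. Each factor $\exp(x_{\pm\alpha},\lambda)$ fixes $\langle x_{\pm\alpha}\rangle$, so $n_\alpha$ does stabilize the component $C_\alpha$ containing the edge $\{\langle x_\alpha\rangle,\langle x_{-\alpha}\rangle\}$; but applying $n_\alpha$ to the edge $\{\langle x_\gamma\rangle,\langle x_{-\gamma}\rangle\}$ with $\gamma=s_\alpha(\beta)$ merely reproduces the edge $\{\langle x_\beta\rangle,\langle x_{-\beta}\rangle\}$, which you already had; it attaches nothing to $C_\alpha$ unless you already know $\langle x_\gamma\rangle\in C_\alpha$, which is the statement being proved. ``Automorphisms map components to components'' yields $\gamma\equiv\delta\Rightarrow s_\alpha(\gamma)\equiv s_\alpha(\delta)$ for the relation ``same component'', not $\gamma\equiv s_\alpha(\gamma)$. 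What is missing is a base step connecting non-proportional long roots directly, and that is the paper's computation: for long roots with $(\alpha,\beta)\neq 0$, $\alpha\neq\pm\beta$, one may assume $\alpha+\beta\in\Phi$; then $x_\alpha+x_{\alpha+\beta}$ is extremal by Proposition~\ref{2generators} and pairs nontrivially under $g$ with both $x_{-\alpha}$ and $x_{-\alpha-\beta}$, giving the path $\langle x_\alpha\rangle\sim\langle x_{-\alpha}\rangle\sim\langle x_\alpha+x_{\alpha+\beta}\rangle\sim\langle x_{-\alpha-\beta}\rangle\sim\langle x_{\alpha+\beta}\rangle$, whence $\langle x_\alpha\rangle$, $\langle x_\beta\rangle$ and $\langle x_{\alpha+\beta}\rangle$ lie in one component. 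With that base step, orthogonal pairs of long roots are handled by a common non-orthogonal long root in every type except $\mathrm{C}_n$, and the Weyl-group machinery becomes unnecessary; in $\mathrm{C}_n$ it cannot help for the reason above, so the short-root exponential (or some equivalent bridge) is indispensable.
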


\begin{proof} 
We denote adjacency in $(\E,\E_2)$ by $\sim$.

Let $\alpha$, $\beta$ be distinct long roots and  $\langle x_{\alpha}\rangle$ and $\langle x_\beta\rangle$ be the corresponding extremal points.
Obviously, if $\alpha=-\beta$, the points are adjacent in the $(\E,\E_2)$.

If $\alpha\neq -\beta$ and   $( \alpha, \beta)\neq 0$,  
we can assume, (up to replacing $\beta$ by $-\beta$)  
that $\langle \alpha,\beta\rangle=-1$ and $\alpha+\beta$ is also a root. 
This implies that $[x_{\alpha}, x_{\beta}]=N_{\alpha,\beta}x_{\alpha+\beta}$ with $N_{\alpha,\beta}\neq 0$. By ~\ref{2generators}, it follows 
that $x_{\alpha}+x_{\alpha+\beta}$ is extremal. 

As $g(x_{-\alpha}, x_{\alpha}+x_{\alpha+\beta})=g(x_{-\alpha}, x_{\alpha})+g(x_{-\alpha}, x_{\alpha+\beta}) =1$, we find $\langle x_\alpha\rangle\sim\langle x_{-\alpha}\rangle\sim \langle x_\alpha+x_{\alpha+\beta}\rangle$.
Moreover, as  $g(x_{-\alpha-\beta},x_\alpha+x_{\alpha+\beta})=1$
we can also find a path from $\langle x_{\alpha+\beta}\rangle$ to $\langle x_\alpha+x_{\alpha+\beta}\rangle$ and hence from $\langle x_\alpha\rangle$ to $\langle x_{\alpha+\beta}\rangle$. Similarly, there is a path from $\langle x_\beta\rangle$ to $\langle x_{\alpha+\beta}\rangle$ and $\langle x_\alpha\rangle$ and $\langle x_\beta\rangle$ are in the same connected component. 

It remains to consider the case where $( \alpha,\beta)=0$, so the roots $\alpha$ and $\beta$ are orthogonal to each other.
If $\Phi$ is not of type $\mathrm{C}_n$, we can find a long root $\gamma$ such that
$(\alpha, \gamma)\neq 0$ and
$( \beta,\gamma)\neq 0$ and then apply the above, to conclude
that both $\langle x_\alpha\rangle$ and $\langle x_\beta\rangle$
are in the connected component of $(\E,\E_2)$ containing $\langle x_\gamma\rangle$.

If  $\Phi$ is of type $\mathrm{C}_n$, the long roots  are $\pm 2 \eps_i$ and the short roots are $\pm (\eps_i \pm \eps_j)$, where $1\mathopen<i\leq j\mathopen<n$. Note that in this case, we have the root lengths $\sqrt{2}$ and $2$.
We will show that for $i\neq j$ the elements $\langle x_{-2 \eps_i}\rangle$
and $\langle x_{2\eps_j}\rangle$ are in the same $(\E,\E_2)$-component.

Define $y=\exp(x_{-\eps_i-\eps_j},1)(x_{2\eps_i}) = x_{2\eps_i} - x_{\eps_i-\eps_j}- x_{-2\eps_j}$,  which is a long root element since $x_{2\eps_i}$ is one.

Bilinearity of $g$ gives
$
g(x_{-2\eps_i}, y)=g(x_{-2\eps_i}, x_{2\eps_i}) -g(x_{-2\eps_i}, x_{\eps_i-\eps_j})-g(x_{-2\eps_i}, x_{-2\eps_j})
=1+0+0\neq 0$.
Moreover
$
g(x_{2\eps_j}, y)=g(x_{2\eps_j}, x_{2\eps_i}) -g(x_{2\eps_j}, x_{\eps_i-\eps_j})-g(x_{2\eps_j}, x_{-2\eps_j})=0+0+1\neq 0$.
So we have a path   $\langle x_{-2\eps_i}\rangle\sim \langle y \rangle\sim \langle x_{2\eps_j}\rangle$ in $(\E,\E_2)$.
\end{proof}

As we have seen before, the radical of the extremal form $g$ is an ideal in
the Lie algebra $\ch_\Phi(\MF)$.
By $\overline{\ch_\Phi(\MF)}$ we denote the quotient Lie algebra
$\ch_\Phi(\MF)/{\mathrm{rad}(g)}$. For $x\in \ch_\Phi(\MF)$ we denote by $\bar x$ the element $x+\mathrm{rad}(g)\in \chf$.

The elements $\bar x$ in $\overline{\ch_\Phi(\MF)}$ where $x$ is a long root element not in $\mathrm{rad}(g)$, are also called long root elements.
The set of extremal points of $\chf$ is denoted by $\bar\E$.

\begin{prop}
The Lie algebra $\overline{\ch_\Phi(\MF)}$ is simple.
It is generated by its extremal elements.

These extremal elements are all scalar multiples of long root elements.
\end{prop}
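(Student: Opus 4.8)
The plan is to prove the three assertions in the order: (a) $\chf$ is linearly spanned by extremal elements, (b) a general lemma forcing a ``spanning'' connected component of the graph $(\E,\E_2)$ to be all of $\E$, (c) simplicity, (d) the description of all extremal elements. For (a): from the identity $[x_\alpha,[x_\alpha,x_{-\alpha}]]=-2x_\alpha$ obtained in the proof that long root elements are extremal, and $\chara\MF\neq 2$, we read off $g(x_\alpha,x_{-\alpha})=-1\neq 0$; hence no long root element $x_\alpha$ lies in $\rad(g)$, so its image $\bar x_\alpha$ in $\chf$ is nonzero. Extremality is inherited by a quotient by an ideal avoiding the element (the defining identity~\ref{extr} and the Premet identities descend verbatim), so each $\bar x_\alpha$ is extremal in $\chf$; moreover $g$ descends to a nondegenerate associative form $\bar g$ on $\chf$, which is its extremal form. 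By Lemma~\ref{generatedbylongroots} the elements $\bar x_\alpha$ already span $\chf$, which proves (a), and a fortiori that $\chf$ is generated by its extremal elements.

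For (b) I would prove: if $\g$ is any Lie algebra generated by extremal elements with nondegenerate extremal form, and $C$ is a connected component of $(\E,\E_2)$ whose points span $\g$, then $C=\E$. The argument is one line: if $\langle x\rangle\in\E\setminus C$, then for every $\langle y\rangle\in C$ we have $(x,y)\notin\E_2$ (otherwise the edge would attach $\langle x\rangle$ to $C$), so $g(x,y)=0$; since $C$ spans $\g$ this forces $x\in\rad(g)=0$, a contradiction. To apply this to $\chf$, I first check that the whole proof of Lemma~\ref{connectedness} carries over to $\chf$: $\bar g$ is nondegenerate and takes the same values on representatives, $\rad(g)$ is invariant under $\mathrm{Aut}(\ch_\Phi(\MF))$ (the extremal form on $\ch_\Phi(\MF)$ being canonical by Proposition~\ref{span}), so the $\exp$-automorphisms used there descend to $\chf$, and every auxiliary extremal element produced in that proof stays outside $\rad(g)$ because the computation exhibits an element pairing non-trivially with it under $g$. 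Thus the points $\langle\bar x_\alpha\rangle$ ($\alpha$ long) all lie in one component $C_0$ of $(\E,\E_2)$ of $\chf$; as they span, the displayed lemma gives $C_0=\E$, so $(\E,\E_2)$ of $\chf$ is connected and every extremal point of $\chf$ lies in $C_0$.

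For (c), let $I$ be a nonzero ideal of $\chf$. Since $\bar g$ is nondegenerate and $\chf$ is spanned by extremal elements, choose $0\neq u\in I$ and an extremal $\bar x$ with $\bar g(\bar x,u)\neq 0$; then $[\bar x,[\bar x,u]]=2\bar g(\bar x,u)\bar x\in I$, so $I$ contains an extremal point. Now walk along a path in the connected graph $(\E,\E_2)$ of $\chf$ from $\langle\bar x\rangle$ to a fixed long root point: at each step, if $\langle\bar z\rangle\in I$ and $(\bar z,\bar z')\in\E_2$, then $\langle\bar z,\bar z'\rangle\cong\mathfrak{sl}_2(\MF)$ by Proposition~\ref{geometry}, and $I\cap\langle\bar z,\bar z'\rangle$ is a nonzero ideal of $\mathfrak{sl}_2(\MF)$; as $\chara\MF\neq 2$ this algebra is simple, hence $\bar z'\in I$. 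Inductively $I$ contains all extremal points, which span $\chf$, so $I=\chf$; since $\chf\neq 0$ and contains a copy of $\mathfrak{sl}_2$ it is non-abelian, so it is simple.

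Finally (d) follows at once: by Proposition~\ref{orbit}, each edge of $(\E,\E_2)$ joins two points in the same $\mathrm{Aut}(\chf)$-orbit, so walking a path from an arbitrary extremal point to a long root point (possible by the connectedness obtained in (b)) shows that all extremal points lie in the single $\mathrm{Aut}(\chf)$-orbit of the long root points; hence every extremal element of $\chf$ is a scalar multiple of a long root element. The only delicate step is the transfer of Lemma~\ref{connectedness} to the quotient in (b) — checking that the extremal points, their $\exp$-automorphisms, and the $g$-value computations used in its proof all survive passage modulo $\rad(g)$; this is bookkeeping rather than substance, and once it is in place the rest is formal, the engine being the observation in (b) that a spanning component of $(\E,\E_2)$ must exhaust $\E$.
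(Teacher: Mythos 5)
Your overall route is sound and in places nicer than the paper's: your component lemma in (b) (a connected component of $(\E,\E_2)$ whose points span the algebra must be all of $\E$, because a point outside it would be $g$-orthogonal to a spanning set) is an elementary substitute for the paper's appeal to Theorem \ref{CIthm1} at exactly this spot, and your simplicity and single-orbit arguments in (c) and (d) match the paper's in substance. However, there is a genuine gap at the load-bearing spanning step. You assert twice that ``by Lemma \ref{generatedbylongroots} the elements $\bar x_\alpha$ ($\alpha$ long) already span $\chf$'' and then feed exactly this into your lemma (b) to get $C_0=\E$. That assertion is false. Lemma \ref{generatedbylongroots} says that the \emph{long root elements} span, where a long root element is any member of the $\mathrm{Aut}(\ch_\Phi(\MF))$-orbit of some $x_\alpha$; its proof essentially uses non-basis orbit elements such as $\exp(x_{-\alpha-\beta},1)(x_{\alpha+2\beta})$ and $\exp(x_\alpha,1)(x_{-\alpha})$. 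The basis vectors $x_\alpha$ with $\alpha$ long never span: their span misses the Cartan elements $h_\alpha$ in every type, and in types $\mathrm{B}_n$, $\mathrm{C}_n$, $\mathrm{F}_4$, $\mathrm{G}_2$ it also misses all short root vectors (in $\mathrm{C}_n$, for instance, the $x_{\pm2\eps_i}$ span only a $3n$-dimensional $\mathfrak{sl}_2^{\,n}$ inside $\mathfrak{sp}_{2n}$ after adding brackets). Passing to $\chf$ does not repair this, since $\rad(g)$ is small.

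Consequently, what you actually need for (b) is that some spanning family of long root \emph{points} lies in the single component $C_0$, and Lemma \ref{connectedness}, even after your careful transfer to the quotient, only places the basis points $\langle\bar x_\alpha\rangle$ there. The missing step is to check that the auxiliary long root elements occurring in the proof of Lemma \ref{generatedbylongroots} also have their points in $C_0$; this is doable by the same kind of $g$-computation you already invoke for the quotient transfer, e.g. $g\bigl(\exp(x_{-\alpha-\beta},1)(x_{\alpha+2\beta}),\,x_{-\alpha-2\beta}\bigr)\neq 0$ and $g\bigl(\exp(x_\alpha,1)(x_{-\alpha}),\,x_\alpha\bigr)\neq 0$, which exhibit $\E_2$-adjacency to long-root basis points — but note you cannot argue via invariance of $C_0$ under the exponentials used there, since in the non-simply-laced cases those are exponentials of \emph{short} root elements, which are not extremal. (To be fair, the paper is also terse here: it cites Lemma \ref{connectedness} for ``the long root elements are in one connected component'' although that lemma literally covers only the $\langle x_\alpha\rangle$; the same supplementary verification is needed there.) Once this is supplied, item (a)'s generation claim, your lemma (b), and the rest of your argument go through as written.
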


\begin{proof}
Assume that $x\in \chf$ is extremal, but not in the same connected component of $(\E,\E_2)$ as some element $\bar x_{\alpha}\in \chf$ with $\alpha\in \Phi$ long. 
Since $\ch_\Phi(\MF)$ is generated by its long root elements, that are in one connected component by ~\ref{connectedness}, it follows by \ref{CIthm1} that $[x,\bar\chf]=0$ and $x\in Z(\chf)=\{0\}$. A contradiction. 
So $(\E,\E_2)$ and $\Gamma(\chf)$ are connected.

But this implies that all extremal points are in one single orbit under the automorphism group of $\chf$, see \ref{CIthm1}, and hence all extremal elements are scalar multiples of long root elements.
 
Now assume $\mathfrak{i}$ is a nonzero ideal of $\chf$.
Let $0\neq i\in \mathfrak{i}$. 
Then, as $\chf$ is spanned by the images of  long root elements,
there is an extremal $x$ with $\bar g(x,i)\neq 0$, where $\bar g$ is the extremal form on $\chf$.
But then $\bar g(x,i)x=[x,[x,i]]\in \mathfrak{i}$.

If $\langle y\rangle$ is in relation $\E_2$ with $\langle x\rangle$,
then inside $\langle x,y\rangle\cong \MSL$ we find that
$g(y,x)y=[y,[y,x]]$ is in $\mathfrak{i}$. By connectedness of $(\E,\E_2)$
we find all extremal elements to be in $\mathfrak{i}$ and hence $\mathfrak{i}=\chf$. This shows that $\chf$ is simple.
\end{proof}

If $\Phi$ is an irreducible root system of type $X_n$, $n\geq 2$, then
the extremal geometry of the Lie algebra $\chf$ is a root shadow space of
type $X_{n,J}$ for some set $J$, except when $X_n=\mathrm{A}_2$ and the characteristic of $\MF$ is $3$.
This is explained by the following.
The radical of $g$ is usually equal to the center of $\g$.
(See \cite{fle15} for details.)
The only exception is the case where $X_n=\mathrm{G}_2$ and
the characteristic of $\MF$ is equal to $3$.
In this exceptional case, the short roots are contained in the radical of
$g$. This radical is $7$-dimensional and $\overline{\mathfrak{ch}_\Phi(\MF)}$
is isomorphic to $\overline{\mathfrak{ch}_{\Phi'}(\MF)}$, where $\Phi'$ is a root system of type $\mathrm{A}_2$.  
 So, if  $X_n=\mathrm{A}_2$ and the characteristic of $\mathbb{F}$ is $3$, then the  extremal geometry is a root shadow space of type $\mathrm{G}_{2,\{1,2\}}$. (We notice that in characteristic $2$ more exceptions occur.)

The extremal geometry of $\chf$, when $\Phi$ is of type $\mathrm{A}_1$ or $\mathrm{C}_n$, does not contain lines.

\bigskip

Let $\g$ be a simple Lie algebra over a subfield $\mathbb{K}$ of $\mathbb{F}$
such that  
$\g\otimes_\mathbb{K}\mathbb{F}$ is isomorphic to $\hat\g$. Then we call
$\g$ a {\em form} of $\hat \g$.

Several forms of the Lie algebras $\chf$ do contain long root elements. 
Forms of Chevalley Lie algebras of type $\mathrm{A}$, $\mathrm{B}$, $\mathrm{C}$
and $\mathrm{D}$, give rise to various orthogonal and unitary Lie algebras generated by their long root elements.
The extremal geometries are then root shadow spaces of type $\mathrm{BC}_{n,2}$ or have no lines.
In case $\Phi$ is of type $\mathrm{D}_4$ one also obtains examples of extremal
geometries which are of type $\mathrm{G}_{2,2}$.
Forms of Chevalley Lie algebas of type $\mathrm{E}$ lead to
extremal geometries without lines, as well as 
root shadow spaces of type $\mathrm{A}_{2,\{1,2\}}$, $\mathrm{G}_{2,2}$, and $\mathrm{F}_{4,1}$. We obtain examples of Lie algebras generated by their extremal elements having an
extremal geometry isomorphic to the root shadow space of a building of algebraic type.  See for example \cite{Ti74,TW02,Wei03}.

\begin{bibdiv}
\begin{biblist}
\bibselect{Bibliography8}
\end{biblist}
\end{bibdiv}

\end{document}